\newtheorem{theorem}{Theorem}[section]
\newtheorem{corollary}[theorem]{Corollary}
\newtheorem{proposition}[theorem]{Proposition}
\newtheorem{lemma}[theorem]{Lemma}
\newtheorem{question*}{Question}
\newtheorem{problem*}{Problem}
\theoremstyle{definition}
\theoremstyle{remark}
\newtheorem*{remark}{Remark}
\numberwithin{equation}{section}
\crefname{figure}{Figure}{Figures}
\theoremstyle{plain}
\newtheorem*{theorem*}{Theorem}
\crefname{theorems}{Theorem}{Theorems}
\crefname{corollaries}{Corollary}{Corollaries}
\newtheorem*{corollary*}{Corollary}
\crefname{corollaries*}{Corollary}{Corollaries}
\crefname{lemma}{Lemma}{Lemmas}
\crefname{proposition}{Proposition}{Propositions}
\crefname{conjectures}{Conjecture}{Conjectures}
\newtheorem*{conjonjecture*}{Conjecture}
\crefname{conjonjectures*}{Conjecture}{Conjectures}
\crefname{definitions}{Definition}{Definitions}
\crefname{hypotheses}{Hypothesis}{Hypotheses}
\newcommand{\R}{\mathbb{R}}
\newcommand{\Q}{\mathbb{Q}}
\newcommand{\kn}{\mathfrak{n}}
\newcommand{\kd}{\mathfrak{d}}
\newcommand{\kp}{\mathfrak{p}}
\newcommand{\re}{\textup{Re}}
\newcommand{\im}{\textup{Im}}
\newcommand{\GL}{\mathrm{GL}}
\newcommand{\N}{\mathrm{N}}
\newcommand{\A}{\mathbb{A}}
\DeclareFontFamily{U}  {MnSymbolF}{}
\DeclareSymbolFont{symbolsMN}{U}{MnSymbolF}{m}{n}
\DeclareFontShape{U}{MnSymbolF}{m}{n}{
    <-6>  MnSymbolF5
   <6-7>  MnSymbolF6
   <7-8>  MnSymbolF7
   <8-9>  MnSymbolF8
   <9-10> MnSymbolF9
  <10-12> MnSymbolF10
  <12->   MnSymbolF12}{}
\DeclareFontShape{U}{MnSymbolF}{b}{n}{
    <-6>  MnSymbolF-Bold5
   <6-7>  MnSymbolF-Bold6
   <7-8>  MnSymbolF-Bold7
   <8-9>  MnSymbolF-Bold8
   <9-10> MnSymbolF-Bold9
  <10-12> MnSymbolF-Bold10
  <12->   MnSymbolF-Bold12}{}
\DeclareMathSymbol{\tbigtimes}{\mathop}{symbolsMN}{2}
\newcommand*{\bigtimes}{%
  \DOTSB
  \tbigtimes
  \slimits@ 
}
\renewcommand{\tilde}{\widetilde}
\renewcommand{\epsilon}{\varepsilon}
\newcommand{\cO}{\mathcal{O}}
\newcommand{\kq}{\mathfrak{q}}
\let\@wraptoccontribs\wraptoccontribs
\title[]{Towards a $\mathrm{GL}_n$ variant of the Hoheisel phenomenon}
\author{Peter Humphries}
\address{Department of Mathematics, University of Virginia, Charlottesville, VA 22904, USA}
\email{\href{mailto:pclhumphries@gmail.com}{pclhumphries@gmail.com}}
\urladdr{\href{https://sites.google.com/view/peterhumphries/}{https://sites.google.com/view/peterhumphries/}}
\author{Jesse Thorner}
\address{Department of Mathematics, University of Illinois, Urbana, IL 61801, USA}
\email{\href{mailto:jesse.thorner@gmail.com}{jesse.thorner@gmail.com}}
\begin{document}

\begin{abstract}
Let $\pi$ be a unitary cuspidal automorphic representation of $\GL_n$ over a number field, and let $\tilde{\pi}$ be contragredient to $\pi$.  We prove effective upper and lower bounds of the correct order in the short interval prime number theorem for the Rankin--Selberg $L$-function $L(s,\pi\times\tilde{\pi})$, extending the work of Hoheisel and Linnik.  Along the way, we prove for the first time that $L(s,\pi\times\widetilde{\pi})$ has an unconditional standard zero-free region apart from a possible Landau--Siegel zero.
\end{abstract}

\thanks{The first author is supported by the European Research Council grant agreement 670239.}

\maketitle

\section{Introduction}

It is well-known that if there exists a constant $0<\delta<\frac{1}{2}$ such that the Riemann zeta function $\zeta(s)$ is nonzero in the region $\re(s)\geq 1-\delta$, then the primes are regularly distributed in intervals of length $x^{1-\delta}$; that is,
\begin{equation}
\label{eqn:Hoh}
\sum_{x<p\leq x+h}\log p\sim h,\qquad x^{1-\delta}\leq h\leq x.
\end{equation}
It was quite stunning when Hoheisel \cite{Hoheisel} proved that \eqref{eqn:Hoh} holds {\it unconditionally} for any $\delta \leq 1/33000$; this has been improved to $\delta\leq \frac{5}{12}$ \cite{MR953665}.  Hoheisel proved \eqref{eqn:Hoh} using the bound
\begin{equation}
\label{eqn:ZDE_Zeta}
N(\sigma,T):=\#\{\rho=\beta+i\gamma\colon \beta\geq\sigma,~|\gamma|\leq T,~\zeta(\rho)=0\}\ll T^{4\sigma(1-\sigma)}(\log T)^{13}
\end{equation}
(a zero density estimate for $\zeta(s)$) and an explicit version of Littlewood's zero-free region
\begin{equation}
\label{eqn:Littlewood}
\zeta(s)\neq 0,\qquad \re(s)\geq 1-\frac{c\log\log(|\im(s)|+e)}{\log(|\im(s)|+e)},
\end{equation}
where $c>0$ is an absolute and effectively computable constant.  This is an improvement over the ``standard'' zero-free region
\[
\zeta(s)\neq 0,\qquad \re(s)\geq 1-\frac{c}{\log(|\im(s)|+e)}
\]
proved by de la Vall{\'e}e Poussin.

Here, we study a broad generalization of Hoheisel's work.  Suppose that an object $\pi$ (e.g., a number field, abelian variety, automorphic form) gives rise to a Dirichlet series
\[
L(s,\pi) = \sum_{n=1}^{\infty} \frac{\lambda_{\pi}(n)}{n^s}
\]
satisfying the {\bf Hoheisel property}; that is, the following conditions hold:
\begin{enumerate}
	\item $\lambda_{\pi}(n)\geq 0$ for all $n$.
	\item We have an ``explicit formula''
	\[
\sum_{p\leq x}\lambda_{\pi}(p)\log p = x - \sum_{\substack{\beta\geq 0 \\ |\gamma|\leq T}}\frac{x^{\beta+i\gamma}}{\beta+i\gamma}+O_{\pi}\Big(\frac{x(\log T x)^2}{T}\Big),
\]
where $1\leq T\leq x$ and $\beta+i\gamma$ denotes a zero of $L(s,\pi)$.
\item We have $L(1+it,\pi)\neq 0$ for all $t\in\R$.  Also, there exists a constant $a_{\pi}>0$, depending at most on $\pi$, such that $L(s,\pi)$ is nonzero in the region
\begin{equation}
\label{eqn:Standard_ZFR}
\re(s)\geq 1-\frac{a_{\pi}}{\log(|\im(s)|+e)}
\end{equation}
apart from $O_{\pi}(1)$ exceptional zeroes $\beta+i\gamma$ that satisfy $|\gamma|\ll_{\pi} 1$.
\item If $N_{\pi}(\sigma,T)$ is the number of zeroes $\rho=\beta+i\gamma$ of $L(s,\pi)$ such that $\beta\geq\sigma$ and $|\gamma|\leq T$, then there exist constants $c_{\pi}>0$, depending at most on $\pi$, such that
\begin{equation}
\label{eqn:lfZde}
	N_{\pi}(\sigma,T)\ll_{\pi} T^{c_{\pi}(1-\sigma)}.
\end{equation}
\item We have $N(0,T)\ll_{\pi} T\log T$.
\end{enumerate}
Moreno \cite{Moreno_Hoheisel} proved that if $L(s,\pi)$ satisfies the Hoheisel property, then there exists a constant $0<\delta_{\pi}<1$  (depending at most on $\pi$) such that
\begin{equation}
\label{eqn:moreno_lower_bound}
\sum_{x<p\leq x+h}\lambda_{\pi}(p)\log p\gg_{\pi} h,\qquad h\geq x^{1-\delta_{\pi}}.
\end{equation}
Moreno referred to this as the {\bf Hoheisel phenomenon}.

A key point here is that while the ``standard'' zero-free region \eqref{eqn:Standard_ZFR} is inferior to Littlewood's region \eqref{eqn:Littlewood} in the dependence on $|\im(s)|$, one can still prove a lower bound on \eqref{eqn:moreno_lower_bound} of the expected order when the zero density estimate \eqref{eqn:lfZde} is {\it log-free}, in the sense that there are no factors of $\log T$ (in contrast with \eqref{eqn:ZDE_Zeta}).  The absence of the logarithmic factors in \eqref{eqn:lfZde} serves as a proxy for a zero-free region as strong as \eqref{eqn:Littlewood}.  However, even with a log-free zero density estimate at one's disposal, Moreno's proof suggests that if $h$ is as small as $x^{1-\delta}$, then one must be able to take $a_{\pi}$ arbitrarily large in \eqref{eqn:Standard_ZFR} in order to replace the lower bound \eqref{eqn:moreno_lower_bound} with an asymptotic.  Such a zero-free region appears to be well beyond the reach of current methods.

Akbary and Trudgian \cite{AT} proved that certain $L$-functions arising from automorphic representations achieve the Hoheisel phenomenon.  To describe their results, let $\mathbb{A}_{F}$ be the ring of ad\`{e}les over a number field $F$, and for an integer $n \geq 1$, let $\mathfrak{F}_n$ be the set of cuspidal automorphic representations $\pi$ of $\GL_n(\mathbb{A}_{F})$ with arithmetic conductor $\kq_{\pi}$ and unitary central character.  We implicitly normalize the central character to be trivial on the product of positive reals when embedded diagonally into the archimedean places of the id\`{e}les $\mathbb{A}_F^{\times}$, so that $\mathfrak{F}_n$ is discrete.  Let $\tilde{\pi}$ be the representation contragredient to $\pi$.  To each $\pi\in\mathfrak{F}_n$, there is an associated standard $L$-function
\[
L(s,\pi)=\sum_{\kn}\frac{\lambda_{\pi}(\kn)}{\N \kn^s}=\prod_{\kp}\prod_{j=1}^n (1-\alpha_{j,\pi}(\kp)\N\kp^{-s})^{-1},\qquad \re(s)>1.
\]
Here, $\kn$ (resp.~$\kp$) runs through the nonzero integral (resp.~prime) ideals of $\cO_F$, the ring of integers of $F$.  The $L$-function $L(s,\pi)$ has an analytic continuation and functional equation similar to that of $\zeta(s)$.

Consider the Rankin--Selberg $L$-function
\[
L(s,\pi\times\tilde{\pi})=\sum_{\kn}\frac{\lambda_{\pi\times\tilde{\pi}}(\kn)}{\N\kn^s}=\prod_{\kp}\prod_{j=1}^n\prod_{j'=1}^n (1-\alpha_{j,j',\pi\times\tilde{\pi}}(\kp)\N\kp^{-s})^{-1},\qquad \re(s)>1,
\]
which also has an analytic continuation and functional equation.  If $\kp\nmid\kq_{\pi}$, then we have
\[
\{\alpha_{j,j',\pi\times\tilde{\pi}}(\kp)\colon 1\leq j,j'\leq n\}=\{\alpha_{j,\pi}(\kp)\overline{\alpha_{j',\pi}(\kp)}\colon 1\leq j,j'\leq n\}.
\]
We define the numbers $\Lambda_{\pi\times\widetilde{\pi}}(\mathfrak{n})$ by the Dirichlet series identity
	\begin{equation}
	\label{eqn:L'/L}
	-\frac{L'}{L}(s,\pi\times\tilde{\pi}) = \sum_{\mathfrak{n}}\frac{\Lambda_{\pi\times\widetilde{\pi}}(\mathfrak{n})}{\mathrm{N}\mathfrak{n}^s}=\sum_{\mathfrak{p}}\sum_{k=1}^{\infty}\frac{\sum_{1\leq j,j'\leq n}\alpha_{j,j',\pi\times\tilde{\pi}}(\mathfrak{p})^k\log\mathrm{N}\mathfrak{p}}{\mathrm{N}\mathfrak{p}^{ks}},
	\end{equation}
	where convergence is absolute for $\mathrm{Re}(s)>1$.  The numbers $\Lambda_{\pi\times\widetilde{\pi}}(\mathfrak{n})$ are nonnegative [10, Lemma a], and they equal zero when $\kn$ is not a power of a prime ideal.  Note that $\Lambda_{\pi\times\tilde{\pi}}(\kp)=\lambda_{\pi\times\tilde{\pi}}(\kp)\log\N\kp$.  Also, if $\kp\nmid\kq_{\pi}$, then $\lambda_{\pi\times\tilde{\pi}}(\kp)=|\lambda_{\pi}(\kp)|^2$.  The generalized Ramanujan conjecture (which we abbreviate to GRC) predicts that $|\alpha_{j,\pi}(\kp)|=1$ whenever $\kp\nmid\kq_{\pi}$ and $|\alpha_{j,\pi}(\kp)|\leq1$ otherwise.

Until now, a ``standard'' zero-free region of the shape \eqref{eqn:Standard_ZFR} was  known for $L(s,\pi\times\tilde{\pi})$ only when $\pi$ is self-dual, so that $\pi=\tilde{\pi}$ (see Brumley \cite[Theorem A.1]{Humphries}), and a hypothesis slightly weaker than GRC suffices to prove such a zero-free region when $\pi\neq\tilde{\pi}$ \cite{Humphries}.  A zero-free region of the shape \eqref{eqn:Littlewood} seems to be currently out of reach when $n>1$ unless $\pi$ is induced, via automorphic induction, by a one-dimensional representation over a cyclic Galois extension of $F$.

When $F=\Q$, Akbary and Trudgian \cite{AT} proved that if $L(s,\pi\times\tilde{\pi})$ has a zero-free region of the shape \eqref{eqn:Standard_ZFR} and there exists a constant $0<\alpha_{\pi}<\frac{1}{2}$ such that the upper bound
\begin{equation}
\label{eqn:AT_hypothesis}
\sum_{x<\N\kn\leq x+h}\Lambda_{\pi\times\tilde{\pi}}(\kn)\ll_{\pi} h,\qquad x^{1-\alpha_{\pi}}\leq h\leq x
\end{equation}
holds, then one can prove a log-free zero density estimate for $L(s,\pi\times\tilde{\pi})$ like \eqref{eqn:lfZde}.  This leads to a result of the following shape: there exists a constant $0<\delta_{\pi}<1$ such that
\begin{equation}
\label{eqn:RJLOT}
\sum_{x<\N\kn\leq x+h}\Lambda_{\pi\times\tilde{\pi}}(\kn)\asymp_{\pi} h,\qquad x^{1-\delta_{\pi}}\leq h\leq x.
\end{equation}
One may think of the hypothesis \eqref{eqn:AT_hypothesis} as an average form of GRC.  If we assume GRC in full, then the contribution from the terms for which $\kn = \kp^k$ with $k\geq 2$ is negligible, and
\begin{equation}
\label{eqn:auto_hoh_phen}
\sum_{x<\N\kp\leq x+h}|\lambda_{\pi}(\kp)|^2\log\N\kp\asymp_{\pi} h,\qquad x^{1-\delta_{\pi}}\leq h\leq x.
\end{equation}

Around the same time as Akbary and Trudgian's work, Motohashi \cite{Motohashi} unconditionally proved a refined version of \eqref{eqn:auto_hoh_phen} when $F=\Q$, $n=2$, and $\pi$ corresponds to a level 1 Hecke--Maa\ss{} cusp form.  Shortly afterward, Lemke Oliver and Thorner \cite{RJLOT} proved that \eqref{eqn:RJLOT} holds for $n\geq 1$ and all $F$ without appealing to \eqref{eqn:AT_hypothesis}, regardless of whether $\pi\in\mathfrak{F}_n$ is self-dual, provided that there exists a (noncuspidal) automorphic representation $\pi\boxtimes\widetilde{\pi}$ of $\mathrm{GL}_{m^2}(\mathbb{A}_F)$ such that $L(s,\pi\boxtimes\widetilde{\pi})=L(s,\pi\times\widetilde{\pi})$.  This is predicted by Langlands functoriality but is only known in special cases.  For instance, this is not even known for an arbitrary $\pi\in\mathfrak{F}_3$.

\section{Main results}

In this paper, we prove an {\it unconditional} proof of \eqref{eqn:RJLOT} in a more precise form.  Our result also exhibits effective dependence on the analytic conductor $C(\pi)$ of $\pi$ (see \eqref{eqn:analytic_conductor_def}) in the spirit of Linnik's bound on the least prime in an arithmetic progression \cite{Linnik}.
\begin{theorem}
	\label{thm:main_theorem}
	Let $\pi\in\mathfrak{F}_n$.  There exist positive, absolute, and effectively computable constants $\Cl[abcon]{Hoheisel1}$, $\Cl[abcon]{Hoheisel2}$, $\Cl[abcon]{Hoheisel3}$, $\Cl[abcon]{Hoheisel4}$, $\Cl[abcon]{Hoheisel5}$, and $\Cl[abcon]{Hoheisel6}$ such that the following are true.
	\begin{enumerate}
	\item The Rankin--Selberg $L$-function $L(s,\pi\times\tilde{\pi})$ has at most one zero in the region
	\[
	\re(s)\geq 1-\frac{\Cr{Hoheisel1}}{\log(C(\pi)^n (|\im(s)|+e)^{n^2[F:\Q]})}.
	\]
	If such an exceptional zero $\beta_1$ exists, then it must be real and simple and satisfy $\beta_1\leq 1-C(\pi)^{-\Cr{Hoheisel2}n}$.
	
	\item Let $A\geq \Cr{Hoheisel3}$, $\log\log C(\pi)\geq \Cr{Hoheisel4}n^4[F:\Q]^2$, and $x\geq C(\pi)^{\Cr{Hoheisel5}A^2n^3[F:\Q]\log(e n[F:\Q])}$.  If
	\[
\delta=\frac{1}{16An^2[F:\Q]\log(e n[F:\Q])}
	\]
	and $x^{1-\delta}\leq h\leq x$, then
	\begin{equation}
	\label{eqn:auto_hoheisel_1}
	\sum_{x<\N\kn\leq x+h}\Lambda_{\pi\times\tilde{\pi}}(\kn)=\begin{cases}
		h(1-\xi^{\beta_1-1})(1+O(e^{-\Cr{Hoheisel6}A}))&\mbox{if $\beta_1$ exists,}\\
		h(1+O(e^{-\Cr{Hoheisel6}A}))&\mbox{otherwise.}
	\end{cases}
	\end{equation}
	The implied constant is absolute, and $\xi\in[x,x+h]$ satisfies $(x+h)^{\beta_1}-x^{\beta_1}=\beta_1 h \xi^{\beta_1-1}$.
	\end{enumerate}
\end{theorem}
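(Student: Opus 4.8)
The plan is to establish the two parts in sequence, with Part~(1), together with an auxiliary log-free zero density estimate, serving as the input to Part~(2). Everything rests on two facts that hold \emph{unconditionally}, with no appeal to the (open) automorphy of $\pi\boxtimes\tilde\pi$ on $\GL_{n^2}$: the coefficients $\Lambda_{\pi\times\tilde\pi}(\kn)$ are nonnegative ([10, Lemma a]; away from $\kq_{\pi}$ one has $\Lambda_{\pi\times\tilde\pi}(\kp^k)=\bigl|\sum_{j}\alpha_{j,\pi}(\kp)^k\bigr|^2\log\N\kp$), and $L(s,\pi\times\tilde\pi)$ has a simple pole at $s=1$ together with an analytic continuation, functional equation, and $t$-aspect convexity bound, its analytic conductor at height $t$ being $\ll C(\pi)^{n}(|t|+e)^{n^2[F:\Q]}$. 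For Part~(1) I would run the de la Vall\'ee Poussin argument directly on $L(s,\pi\times\tilde\pi)$: for $\sigma>1$ and a hypothetical zero $\beta+i\gamma$, applying $3+4\cos\theta+\cos2\theta\geq0$ to $-\frac{L'}{L}(s,\pi\times\tilde\pi)$ at $\sigma,\sigma+i\gamma,\sigma+2i\gamma$ and using $\Lambda_{\pi\times\tilde\pi}\geq0$ gives
\[
-3\tfrac{L'}{L}(\sigma,\pi\times\tilde\pi)-4\re\tfrac{L'}{L}(\sigma+i\gamma,\pi\times\tilde\pi)-\re\tfrac{L'}{L}(\sigma+2i\gamma,\pi\times\tilde\pi)\ \geq\ 0.
\]
Bounding the first term by $\tfrac{3}{\sigma-1}+O(n\log C(\pi))$ via the pole and the Hadamard product of $(s-1)L(s,\pi\times\tilde\pi)$, the second by $-\tfrac{4}{\sigma-\beta}+O(\log\mathfrak{Q}_\gamma)$ and the third by $O(\log\mathfrak{Q}_\gamma)$ with $\mathfrak{Q}_\gamma=C(\pi)^{n}(|\gamma|+e)^{n^2[F:\Q]}$, and optimizing $\sigma-1\asymp1/\log\mathfrak{Q}_\gamma$, yields the stated region with at most one exceptional zero, which the same inequality (with $|\gamma|$ bounded below in terms of the conductor) forces to be real and simple. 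For $\beta_1\leq1-C(\pi)^{-\Cr{Hoheisel2}n}$ I would combine a lower bound $\Res_{s=1}L(s,\pi\times\tilde\pi)\gg C(\pi)^{-O(n)}$ (unconditional, following Hoffstein--Lockhart-type inputs, equivalently a lower bound for $L(1,\pi\times\tilde\pi)$) with a convexity upper bound for $|L'(\sigma,\pi\times\tilde\pi)|$ near $s=1$; the mean value theorem on $[\beta_1,1]$ would otherwise make the residue too small.

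The technical core is an auxiliary log-free zero density estimate $N_{\pi\times\tilde\pi}(\sigma,T)\ll\bigl(C(\pi)^{n}T^{n^2[F:\Q]}\bigr)^{c(1-\sigma)}$, with no logarithmic factors, uniformly for $\tfrac12\leq\sigma\leq1$. I would obtain this by the Halász--Montgomery method: a zero of $L(s,\pi\times\tilde\pi)$ with $\re(s)\geq\sigma$ forces a Dirichlet polynomial with coefficients $\Lambda_{\pi\times\tilde\pi}(\kn)$ over a dyadic range to be large (detected via a smoothed contour integral of $\tfrac{L'}{L}$), and the number of well-separated spectral points at which such a polynomial is large is controlled by an $L^2$ mean value (large sieve) estimate, the coefficients being handled \emph{on average} by the unconditional Rankin--Selberg bound $\sum_{\N\kn\leq x}\Lambda_{\pi\times\tilde\pi}(\kn)\ll_{\pi}x$. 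The point is that only nonnegativity and this first-moment bound are used, so the automorphy of $\pi\boxtimes\tilde\pi$ is not needed, and that every estimate is kept free of $\log$-losses with the correct polynomial dependence on $C(\pi)$.

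For Part~(2) I would start from the truncated explicit formula
\[
\sum_{\N\kn\leq x}\Lambda_{\pi\times\tilde\pi}(\kn)=x-\sum_{|\gamma|\leq T}\frac{x^{\beta+i\gamma}}{\beta+i\gamma}+O\!\Bigl(\tfrac{x(\log Tx)^2}{T}\Bigr),
\]
a standard Perron-plus-contour-shift computation, with the main term $x$ coming from the simple pole; then difference over $(x,x+h]$ and estimate $\sum_{|\gamma|\leq T}\frac{(x+h)^\rho-x^\rho}{\rho}$ using $|(x+h)^\rho-x^\rho|\leq\min\{|\rho|hx^{\beta-1},\,2x^\beta/|\rho|\}$. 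Part~(1) confines all zeros except $\beta_1$ to $\re(s)<1-\Cr{Hoheisel1}/\log\mathfrak{Q}_T$; partial summation against the log-free density bound then gives $\sum_{\rho\neq\beta_1}hx^{\beta-1}\ll he^{-\Cr{Hoheisel6}A}$ once $\log x$ exceeds a suitable multiple of $\log\mathfrak{Q}_T$. Choosing $T\asymp x^{\delta}e^{\Cr{Hoheisel6}A}$ and tracking the constraints $\delta n^2[F:\Q]\log(en[F:\Q])\ll1/A$ and $\log x\gg An\log C(\pi)$ forces exactly the stated shape of $\delta$ and the lower bound $x\geq C(\pi)^{\Cr{Hoheisel5}A^2n^3[F:\Q]\log(en[F:\Q])}$. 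When $\beta_1$ exists it contributes the isolated term $-\frac{(x+h)^{\beta_1}-x^{\beta_1}}{\beta_1}=-h\xi^{\beta_1-1}$, which is the source of $\xi$ in the statement, while the Deuring--Heilbronn repulsion keeps the contribution of the remaining zeros within $O(he^{-\Cr{Hoheisel6}A})$.

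I expect the genuinely hard step to be the unconditional log-free zero density estimate for $L(s,\pi\times\tilde\pi)$: previous work obtained such an estimate only under the hypothesis that $\pi\boxtimes\tilde\pi$ is automorphic on $\GL_{n^2}$, which supplies the requisite higher-moment/large-sieve inputs directly, whereas here the density argument must be driven purely by the nonnegativity of $\Lambda_{\pi\times\tilde\pi}$ and the first-moment Rankin--Selberg bound while losing no logarithmic factors and retaining polynomial control in $C(\pi)$. A secondary delicate point is the balancing in Part~(2): because only the standard zero-free region is available rather than Littlewood's, the asymptotic can be pushed out only to $h\geq x^{1-\delta}$ with $\delta$ as in the statement, and the Deuring--Heilbronn bookkeeping in the exceptional-zero case has to be handled with care.
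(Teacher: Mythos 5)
Your high-level plan matches the paper's quite closely: a standard zero-free region driven by the nonnegativity of $\Lambda_{\pi\times\tilde\pi}(\kn)$, a log-free zero density estimate in the style of Soundararajan--Thorner, a truncated explicit formula, and Deuring--Heilbronn repulsion in the exceptional case. But several of the steps you sketch diverge from what the paper actually does, and in two places the divergence hides a genuine gap.

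\textbf{Zero-free region.} You propose applying $3+4\cos\theta+\cos 2\theta\geq 0$ directly to $-\frac{L'}{L}(s,\pi\times\tilde\pi)$ at $\sigma$, $\sigma+i\gamma$, $\sigma+2i\gamma$; the paper instead forms the isobaric representation $\Pi=\pi\boxplus\pi\otimes|\det|^{i\gamma}\boxplus\pi\otimes|\det|^{-i\gamma}$ and runs the argument on $L(s,\Pi\times\tilde\Pi)$, whose $-L'/L$ has coefficients $\Lambda_{\pi\times\tilde\pi}(\kn)(1+2\cos(\gamma\log\N\kn))^2\geq 0$. These are close in spirit, but you have not accounted for the fact that $L(s,\pi\times\tilde\pi)$ has a pole at $s=1$, so your second and third terms contribute $4\re\frac{1}{\sigma+i\gamma-1}$ and $\re\frac{1}{\sigma+2i\gamma-1}$, which are of size $\asymp\frac{1}{\sigma-1}$ when $|\gamma|\ll\sigma-1$. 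For such small $|\gamma|$ your inequality degenerates to something like $\frac{4}{\sigma-\beta}\leq\frac{8}{\sigma-1}+O(\log\mathfrak{Q})$, which gives nothing. The paper handles this range by a separate argument that exploits the self-duality of $\pi\times\tilde\pi$: if $\beta+i\gamma$ is a zero with $0<|\gamma|\ll 1/\log C(\pi\times\tilde\pi)$, so is $\beta-i\gamma$, and counting both in the Hadamard sum, then solving a quadratic, gives the required bound on $\beta$; the $\gamma=0$ case is handled by yet another counting argument showing there is at most one such zero. Your proposal is silent on this small-$|\gamma|$ regime, and that is a real gap, not a bookkeeping omission.

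\textbf{The bound $\beta_1\leq 1-C(\pi)^{-\Cr{Hoheisel2}n}$.} Here your route is genuinely different. You want to lower-bound $\kappa=\Res_{s=1}L(s,\pi\times\tilde\pi)$ by $C(\pi)^{-O(n)}$ and then use the mean value theorem and a convexity bound for the derivative near $s=1$. The paper instead \emph{derives} this from its Deuring--Heilbronn repulsion estimate (Proposition 5.3): if $\beta_1$ were too close to $1$, the repulsion would force $L(\sigma,\pi\times\tilde\pi)\neq 0$ for all $\sigma>-3$, contradicting the presence of a trivial zero in $\{-2,-1,0\}$ inherited from $\zeta_F(s)$ through the archimedean factors. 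The paper's route is self-contained; your route is legitimate in principle, but you are invoking an unconditional, effective, field-uniform lower bound $\kappa\gg C(\pi)^{-O(n)}$ as if it were a freely available black box. This is not obviously so; such lower bounds for $L(1,\pi\times\tilde\pi)$ or its residue are delicate, and an effective polynomial bound in $C(\pi)$ with exponent $O(n)$ is precisely the kind of Linnik-type statement the paper is working to establish. You would at minimum need to cite a precise reference with the stated uniformity, or supply the argument.

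\textbf{Log-free density estimate and the explicit formula.} You appeal to the first-moment bound $\sum_{\N\kn\leq x}\Lambda_{\pi\times\tilde\pi}(\kn)\ll_\pi x$ to control coefficients ``on average.'' This is not enough. Both the Soundararajan--Thorner density argument and the Perron/contour-shift step need a short-interval Brun--Titchmarsh bound of the form $\sum_{x<\N\kn\leq xe^{1/T}}\Lambda_{\pi\times\tilde\pi}(\kn)\ll n^2[F:\Q]\,x/T$, and the paper devotes an entire section to proving this via the Selberg sieve with explicit dependence on $n$ and $[F:\Q]$ (Proposition 4.1, plus Lemmas 4.2--4.5). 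Without it, the error term in your explicit formula is not $O(x(\log Tx)^2/T)$ as you write, and the large-values step in the density argument does not close. This dependence is also what forces the $\log(en[F:\Q])$ factor into $\delta$ and the exponent in the lower bound for $x$, so it is not an optional refinement. Your sketch should flag this as a necessary intermediate theorem rather than folding it into ``handled on average.''

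The Part~(2) contour-shift, dyadic decomposition, and Deuring--Heilbronn bookkeeping you describe are essentially the same as the paper's, modulo the caveats above.
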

\begin{remark}
Our assumed lower bound on $C(\pi)$ simplifies several aspects of the proof.  It can be removed with additional effort, but the dependence on $n$ and $[F:\Q]$ will change.
\end{remark}

Note that since $\Lambda_{\pi\times\tilde{\pi}}(\kn)\geq 0$ for all $\kn$ and $\Lambda_{\pi\times\tilde{\pi}}(\kp)=|\lambda_{\pi}(\kp)|^2\log\N\kp$ for $\kp\nmid\kq_{\pi}$, \cref{thm:main_theorem} (along with the Luo--Rudnick--Sarnak bound \eqref{eqn:LRS_2} to handle the $\kp \mid \kq_{\pi}$) implies the bound
\[
\sum_{\substack{x<\N\kp\leq x+h}}|\lambda_{\pi}(\kp)|^2\log\N\kp\leq \begin{cases}
		h(1-\xi^{\beta_1-1})(1+O(e^{-\Cr{Hoheisel6}A}))&\mbox{if $\beta_1$ exists,}\\
		h(1+O(e^{-\Cr{Hoheisel6}A}))&\mbox{otherwise}
	\end{cases}
\]
under the hypotheses of \cref{thm:main_theorem}.  We do not have the corresponding lower bound because we cannot rule out the possibility that the contribution from prime powers is $\gg h$ due to insufficient progress toward GRC. In contexts where a prime power contribution is expected to be small but GRC is not yet known, it often suffices to establish the ``Hypothesis H'' of Rudnick and Sarnak \cite{RS}, which asserts that for any fixed $k\geq 2$, we have
\[
\sum_{\kp}|\Lambda_{\pi}(\kp^k)|^2\N\kp^{-k}<\infty.
\]
(The original hypothesis is stated over $\Q$, but the extension to number fields incurs no complications.)  Hypothesis H is known when $\pi\in\mathfrak{F}_n$ and $1\leq n\leq 4$, along with a few other special cases \cite{Kim,RS,MR2364718}.  While Hypothesis H on its own is not enough to ensure that the contributions from higher prime powers in \eqref{eqn:auto_hoheisel_1} are negligible, the progress toward Langlands functoriality that leads to proofs of Hypothesis H when $n\leq 4$ also leads to the following strong form of the Hoheisel phenomenon for $\pi\in\mathfrak{F}_n$ with $n\in\{1,2,3,4\}$.

\begin{theorem}
\label{hypH}
	Let $n\in\{1,2,3,4\}$ and $\pi\in\mathfrak{F}_n$.  With the notation and hypotheses of \cref{thm:main_theorem}, we have
	\[
	\sum_{x<\N\kn\leq x+h}|\lambda_{\pi}(\kp)|^2\log\N\kp=\begin{cases}
		h(1-\xi^{\beta_1-1})(1+O(e^{-\Cr{Hoheisel6}A}))&\mbox{if $\beta_1$ exists,}\\
		h(1+O(e^{-\Cr{Hoheisel6}A}))&\mbox{otherwise.}
	\end{cases}
	\]
\end{theorem}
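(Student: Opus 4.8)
The plan is to deduce \cref{hypH} from \cref{thm:main_theorem}. For $\kp\nmid\kq_{\pi}$ one has $\Lambda_{\pi\times\tilde\pi}(\kp)=|\lambda_{\pi}(\kp)|^{2}\log\N\kp$, so grouping the ideals $\kn\le x+h$ with $\N\kn>x$ according to whether they are primes, prime powers $\kp^{k}$ with $\kp\mid\kq_{\pi}$, or prime powers $\kp^{k}$ with $k\ge 2$ and $\kp\nmid\kq_{\pi}$ gives
\[
\sum_{x<\N\kn\leq x+h}\Lambda_{\pi\times\tilde\pi}(\kn)=\sum_{x<\N\kp\leq x+h}|\lambda_{\pi}(\kp)|^{2}\log\N\kp+E_{1}+E_{2}-E_{3},
\]
where $E_{1}=\sum_{x<\N\kp\le x+h,\ \kp\mid\kq_{\pi}}\Lambda_{\pi\times\tilde\pi}(\kp)$, $E_{2}=\sum_{k\ge 2}\sum_{x<\N\kp^{k}\le x+h}\Lambda_{\pi\times\tilde\pi}(\kp^{k})$, and $E_{3}=\sum_{x<\N\kp\le x+h,\ \kp\mid\kq_{\pi}}|\lambda_{\pi}(\kp)|^{2}\log\N\kp$. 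It therefore suffices to show $E_{1},E_{2},E_{3}=O(he^{-\Cr{Hoheisel6}A})$; \cref{hypH} then follows at once from \cref{thm:main_theorem}.

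The sums $E_{1}$ and $E_{3}$ are harmless: only $O(\log C(\pi))$ primes divide $\kq_{\pi}$, and for each the Luo--Rudnick--Sarnak bound \eqref{eqn:LRS_2} gives $|\lambda_{\pi}(\kp)|^{2}\ll_{n}\N\kp^{1-2/(n^{2}+1)}$ and $|\Lambda_{\pi\times\tilde\pi}(\kp)|\ll_{n}\N\kp^{1-2/(n^{2}+1)}\log\N\kp$; since $\N\kp\le 2x$ throughout, $E_{1}+E_{3}\ll_{n}(\log C(\pi))\,x^{1-2/(n^{2}+1)}\log x$.

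The substance is the estimate for $E_{2}$. For $\kp\nmid\kq_{\pi}$ we have $\Lambda_{\pi\times\tilde\pi}(\kp^{k})=(\log\N\kp)|p_{k}(\kp)|^{2}$, where $p_{k}(\kp)=\sum_{j=1}^{n}\alpha_{j,\pi}(\kp)^{k}$; by Newton's identities $p_{k}(\kp)$ is a fixed integer polynomial in $e_{i}=\lambda_{\wedge^{i}\pi}(\kp)$, $1\le i\le n$. Here the restriction $n\in\{1,2,3,4\}$ enters: every exterior power lift is then an isobaric automorphic representation, since $\wedge^{1}\pi=\pi$, $\wedge^{n}\pi=\omega_{\pi}$, $\wedge^{n-1}\pi\cong\tilde\pi\otimes\omega_{\pi}$, and the only remaining case, $n=4$ and $i=2$, is Kim's lift $\wedge^{2}\colon\GL_{4}\to\GL_{6}$. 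Hence $L(s,\wedge^{i}\pi\times\widetilde{\wedge^{i}\pi})$ is holomorphic for $\re(s)>1$ apart from a pole at $s=1$ of order $O_{n}(1)$ and size $\ll_{n}C(\pi)^{O_{n}(1)}$, and a Landau/Perron-type argument yields the second-moment bound $\sum_{\N\kp\le w}|e_{i}|^{2}\ll_{n}w(\log wC(\pi))^{O_{n}(1)}$. Expanding $|p_{k}(\kp)|^{2}$ into monomials in the $|e_{i}|$ and, in each, retaining one or two factors for this bound (via Cauchy--Schwarz) while estimating the rest by $|e_{i}|\ll_{n}\N\kp^{\theta_{n}}$ with $\theta_{n}<\tfrac12$ from \eqref{eqn:LRS_2}, one obtains $\sum_{\N\kp\le y^{1/k}}\Lambda_{\pi\times\tilde\pi}(\kp^{k})\ll_{n,k}y^{1/2+\theta_{n}+o(1)}$ for each fixed $k$. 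Summing this over $2\le k\le k_{1}(n)$, and handling $k>k_{1}(n)$ by the trivial bound $|p_{k}(\kp)|\ll_{n}\N\kp^{k\theta_{n}}$ (for which the range of $\kp$ is correspondingly short), gives $\sum_{k\ge 2}\sum_{\N\kp^{k}\le y}\Lambda_{\pi\times\tilde\pi}(\kp^{k})\ll_{n}C(\pi)^{O_{n}(1)}y^{1-\eta_{n}+o(1)}$ with $\eta_{n}=\tfrac12-\theta_{n}>0$; subtracting this bound at $x+h$ and at $x$ controls $E_{2}$ similarly.

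Finally one checks that all three errors are $O(he^{-\Cr{Hoheisel6}A})$: with $\delta=(16An^{2}[F:\Q]\log(en[F:\Q]))^{-1}$ much smaller than $\min(\eta_{n},2/(n^{2}+1))$ for $A\ge\Cr{Hoheisel3}$, each of $E_{1},E_{2},E_{3}$ is $\ll_{n}C(\pi)^{O_{n}(1)}h\,x^{-c_{n}}(\log x)^{O_{n}(1)}$ for a fixed $c_{n}>0$ (using $h\ge x^{1-\delta}$), and the hypotheses $x\ge C(\pi)^{\Cr{Hoheisel5}A^{2}n^{3}[F:\Q]\log(en[F:\Q])}$ and $\log\log C(\pi)\ge\Cr{Hoheisel4}n^{4}[F:\Q]^{2}$ make $x^{c_{n}}$ overwhelm $C(\pi)^{O_{n}(1)}(\log x)^{O_{n}(1)}e^{\Cr{Hoheisel6}A}$ once $\Cr{Hoheisel5}$ is taken large enough. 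The hard part will be the estimate for $E_{2}$ when $n\in\{3,4\}$: there $L(s,\pi\times\tilde\pi)$ is not known to be automorphic (the existence of $\pi\boxtimes\tilde\pi$ is open already for $n=3$), so one cannot simply quote prime-power bounds for a single automorphic $L$-function, and must instead route the argument through the low-rank exterior-power functoriality that also underlies Hypothesis H, all the while keeping the dependence on $C(\pi)$ polynomial so as not to spoil the Linnik-type effectivity of \cref{thm:main_theorem}; a subsidiary point is that the saving $\eta_{n}$, though small, must exceed $\delta$ and beat the conductor factors, which is precisely why $x$ is required to be so large relative to $C(\pi)$.
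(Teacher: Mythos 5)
Your overall reduction agrees with the paper's: by \cref{thm:main_theorem} and the nonnegativity of $\Lambda_{\pi\times\tilde{\pi}}(\kn)$, one only needs to bound the ramified-prime and prime-power ($k\ge 2$) contributions, and the ramified primes are disposed of essentially as you do. The real divergence is in the prime-power estimate. The paper never invokes Newton's identities, exterior-power lifts directly, or second moments of $\lambda_{\wedge^i\pi}(\kp)$; instead it bounds $\Lambda_{\pi\times\tilde\pi}(\kp^k)\le n^2\max_j|\alpha_{j,\pi}(\kp)|^{2k}\log\N\kp$ at unramified $\kp$, sets $\beta_{\kp}=\N\kp^{-1}\max_j|\alpha_{j,\pi}(\kp)|^{2}$, and runs a Rankin-trick computation whose sole non-elementary input is Brumley's estimate $\prod_\kp\sum_{r\ge 0}\max_j|\alpha_{j,\pi}(\kp)|^{2r}\N\kp^{-r(1+\epsilon)}\ll_{n,\epsilon}C(\pi)^\epsilon$ (\cref{lem:second_order}); that single lemma already packages the low-rank functoriality you appeal to, including Kim's $\wedge^2\colon\GL_4\to\GL_6$ for $n=4$, together with the $C(\pi)^\epsilon$ uniformity needed for the Linnik-type effectivity. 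Your route would effectively re-derive a version of this input by hand, and the sketch has rough edges: the pointwise bound $|e_i|\ll_n\N\kp^{\theta_n}$ is not what \eqref{eqn:LRS_finite} gives for intermediate $i$ (one gets $|e_i|\le\binom{n}{i}\N\kp^{i\theta_n}$, which for $n\in\{3,4\}$ and $i\ge 2$ already exceeds $\N\kp^{1/2}$), so the split between the Cauchy--Schwarz factors and the pointwise ones must be reworked; and the second-moment bound $\sum_{\N\kp\le w}|\lambda_{\wedge^i\pi}(\kp)|^2\ll_n w(\log wC(\pi))^{O_n(1)}$ needs a proof with explicit control of the pole order and residue of $L(s,\wedge^i\pi\times\widetilde{\wedge^i\pi})$ in terms of $C(\pi)$, so that the final saving beats the conductor powers. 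None of this is unfixable, and your plan relies on the same underlying automorphy, but the paper's argument via \cref{lem:second_order} and Rankin's trick sidesteps these complications entirely; that is the lemma to cite rather than rebuild.
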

\begin{remark}
Taking $n=2$ and $F=\Q$, we recover Motohashi's result in \cite{Motohashi}.	Also, if $n\geq 1$ and $\pi\in\mathfrak{F}_n$ satisfies the averaged form of GRC in \eqref{eqn:secondorder} below, then \cref{hypH} will hold for $\pi$.  However, the implied constant will depend on $n$ in accordance with the implied constant in \eqref{eqn:secondorder}.
\end{remark}

As in \cite{AT,RJLOT,Moreno_Hoheisel,Motohashi}, one must have a standard zero-free region and a log-free zero density estimate for $L(s,\pi\times\tilde{\pi})$.  Our log-free zero density estimate is proved using the ideas in Soundararajan and Thorner \cite{ST}.  However, an unconditional standard zero-free region for $L(s,\pi\times\tilde{\pi})$ has not yet appeared in the literature.  This is due to the fact that most previous proofs of a standard zero-free region for an $L$-function $L(s,\Pi)$, including \cite[Proof of Theorem 5.10]{IK} (which is based on \cite[Appendix]{HL}), have required that the Rankin--Selberg $L$-functions $L(s,\Pi \times \Pi)$, $L(s,\tilde{\Pi} \times \tilde{\Pi})$, and $L(s,\Pi \times \tilde{\Pi})$ exist, and this is not yet known to be the case when $\Pi$ is the Rankin--Selberg convolution $\pi \times \tilde{\pi}$ except when $n \in \{1,2\}$. The only exception is the aforementioned work of Brumley when $\pi$ is self-dual; this avoids these requirements but is contingent on the assumption that $\pi$ is self-dual. Humphries \cite{Humphries} recently proved the existence of a constant $c_{\pi}>0$ (depending at most on $\pi$) such that if $|\alpha_{j,\pi}(\kp)|\leq 1$ for almost all $\kp$, then $L(s,\pi\times\tilde{\pi})$ has a zero-free region of the shape
\[
\re(s)\geq 1-\frac{c_{\pi}}{\log(|\im(s)|+e)},\qquad \im(s)\neq 0.
\]
Our work addresses the remaining cases using the fact that $\pi\times\tilde{\pi}$ is self-dual even if $\pi$ is not.

As part of our proofs, we supply an unconditional standard zero-free region for $L(s,\pi\times\tilde{\pi})$ (apart from a possible Landau--Siegel zero) with good uniformity in the analytic conductor.  In order to ensure that our results are completely effective (even if a Landau--Siegel zero exists), we also prove a uniform version of Deuring and Heilbronn's observation that Landau--Siegel zeroes tend to repel other zeroes away from the line $\re(s)=1$.

\section{Properties of \texorpdfstring{$L$}{L}-functions}
\label{sec:L-functions}


We recall some standard facts about $L$-functions arising from automorphic representations and their Rankin--Selberg convolutions; see \cite{Brumley,GJ2,JPSS,MW,ST}.

\subsection{Standard \texorpdfstring{$L$}{L}-functions}

Let $\pi=\bigotimes_{\kp}\pi_{\kp} \in \mathfrak{F}_n$ be a cuspidal automorphic representation of $\mathrm{GL}_n(\mathbb{A}_F)$.  Let $\kq_{\pi}$ be the conductor of $\pi$.  The local standard $L$-function $L(s,\pi_{\kp})$ at a prime ideal $\kp$ is defined in terms of the Satake parameters $\{\alpha_{1,\pi}(\kp),\ldots,\alpha_{n,\pi}(\kp)\}$ by
\begin{equation}
	\label{eqn:Euler_p_single}
	L(s,\pi_{\kp})=\prod_{j=1}^{n}\Big(1-\frac{\alpha_{j,\pi}(\kp)}{\N\kp^{s}}\Big)^{-1}=\sum_{k=0}^{\infty}\frac{\lambda_{\pi}(\kp^k)}{\N\kp^{ks}}.
\end{equation}
We have $\alpha_{j,\pi}(\kp)\neq0$ for all $j$ whenever $\kp\nmid\kq_{\pi}$, whereas it may be the case that $\alpha_{j,\pi}(\kp)=0$ for at least one $j$ when $\kp \mid \kq_{\pi}$.  The standard $L$-function $L(s,\pi)$ associated to $\pi$ is of the form
\[
L(s,\pi)=\prod_{\kp} L(s,\pi_{\kp})=\sum_{\kn}\frac{\lambda_{\pi}(\kn)}{\N\kn^s}.
\]
The Euler product and Dirichlet series converge absolutely when $\re(s)>1$.

At each archimedean place $ v$ of $F$, there are $n$ Langlands parameters $\mu_{j,\pi}(v)\in\mathbb{C}$, from which we define
\[
L(s,\pi_{\infty}) = \prod_{ v}\prod_{j=1}^{n}\Gamma_{ v}(s+\mu_{j,\pi}( v)),\qquad \Gamma_{ v}(s):=\begin{cases}
	\pi^{-s/2}\Gamma(s/2)&\mbox{if $F_{ v}=\R$,}\\
	2(2\pi)^{-s}\Gamma(s)&\mbox{if $F_{ v}=\mathbb{C}$.}
\end{cases}
\]
Luo, Rudnick, and Sarnak \cite{LRS} and Mueller and Speh \cite{MS} proved the uniform bounds
\begin{equation}
\label{eqn:LRS_finite}
	|\alpha_{j,\pi}(\kp)|\leq  \N\kp^{\theta_n}\quad\textup{and}\quad\re(\mu_{j,\pi}( v))\geq -\theta_n,\qquad \theta_n = \frac{1}{2} - \frac{1}{n^2 + 1}
\end{equation}
The generalized Selberg eigenvalue conjecture and GRC assert that we have $\theta_n=0$ in \eqref{eqn:LRS_finite}.

Let $\tilde{\pi}\in\mathfrak{F}_n$ be the cuspidal automorphic representation contragredient to $\pi$. We have $\kq_{\pi}=\kq_{\tilde{\pi}}$, and for each $\kp\nmid \kq_{\pi}$, we have the equalities of sets $\{\alpha_{j,\tilde{\pi}}(\kp)\}=\{\overline{\alpha_{j,\pi}(\kp)}\}$.

The completed standard $L$-function
\[
\Lambda(s,\pi) = (D_F^n \N \kq_{\pi})^{s/2} L(s,\pi)L(s,\pi_{\infty})
\]
is entire of order $1$, and there exists a complex number $\varepsilon(\pi)$ of modulus $1$ such that for all $s\in\mathbb{C}$, we have the functional equation $\Lambda(s,\pi)=\varepsilon(\pi)\Lambda(1-s,\tilde{\pi})$.

Let $d( v)=1$ if $F_{ v}=\R$ and $d( v)=2$ if $F_{ v}=\mathbb{C}$.  We define the analytic conductor of $\pi$ to be
\begin{equation}
\label{eqn:analytic_conductor_def}
C(\pi,t):=D_F^n \N\kq_{\pi}\prod_{ v}\prod_{j=1}^n(e+|it+\mu_{j,\pi}( v)|^{d( v)}),\qquad C(\pi):=C(\pi,0).
\end{equation}

\subsection{Rankin--Selberg \texorpdfstring{$L$}{L}-functions}
\label{subsec:RS}

Let $\pi\in\mathfrak{F}_n$.  The local Rankin--Selberg $L$-function $L(s,\pi_{\kp}\times\tilde{\pi}_{\kp})$ is defined at a prime ideal $\kp$ by
\begin{equation}
\label{eqn:RS_Dirichlet_series}
L(s,\pi_{\kp}\times\tilde{\pi}_{\kp})=\prod_{j=1}^{n}\prod_{j'=1}^{n}(1-\alpha_{j,j',\pi\times\tilde{\pi}}(\kp) \N\kp^{-s})^{-1}=\sum_{k=0}^{\infty}\frac{\lambda_{\pi\times\tilde{\pi}}(\kp^k)}{\N\kp^{ks}}.
\end{equation}
for suitable Satake parameters $\alpha_{j,j',\pi\times\tilde{\pi}}(\kp)$.  If $\kp\nmid \kq_{\pi}$, then we have the equality of sets
\begin{equation}
\label{eqn:separate_dirichlet_coeffs}
\{\alpha_{j,j',\pi\times\tilde{\pi}}(\kp)\}=\{\alpha_{j,\pi}(\kp)\alpha_{j',\tilde{\pi}}(\kp)\} = \{\alpha_{j,\pi}(\kp) \overline{\alpha_{j',\pi}(\kp)}\}.
\end{equation}
See \cite[Appendix]{ST} for a complete description of the numbers $\alpha_{j,j',\pi\times\tilde{\pi}}(\kp)$ even when $\kp \mid \kq_{\pi}$.  The Rankin-Selberg $L$-function $L(s,\pi\times\tilde{\pi})$ associated to $\pi$ and $\tilde{\pi}$ is of the form
\[
L(s,\pi\times\tilde{\pi})=\prod_{\kp}L(s,\pi_{\kp}\times\tilde{\pi}_{\kp})=\sum_{\kn}\frac{\lambda_{\pi\times\tilde{\pi}}(\kn)}{\N\kn^s}.
\]

Bushnell and Henniart \cite{BH} proved that the conductor $\mathfrak{q}_{\pi\times\widetilde{\pi}}$ divides $\mathfrak{q}_{\pi}^{2n-1}$.  At an archimedean place $v$ of $F$, there are $n^{2}$ complex Langlands parameters $\mu_{j,j',\pi\times\tilde{\pi}}( v)$, from which we define
\[
L(s,\pi_{\infty}\times\tilde{\pi}_{\infty}) = \prod_{ v}\prod_{j=1}^{n}\prod_{j'=1}^{n}\Gamma_{ v}(s+\mu_{j,j',\pi\times\tilde{\pi}}(v)).
\]
If $\pi$ is unramified at $v$, then we have the equality of sets
\[\{\mu_{j,j',\pi\times\tilde{\pi}}( v)\} = \{\mu_{j,\pi}( v) + \mu_{j',\tilde{\pi}}(v)\}.\]
Using the explicit descriptions of $\alpha_{j,j',\pi\times\tilde{\pi}}(\kp)$ and $\mu_{j,j',\pi\times\tilde{\pi}}(v)$ in \cite{Humphries,ST}, one sees that
\begin{equation}
\label{eqn:LRS_2}
|\alpha_{j,j',\pi\times\tilde{\pi}}(\kp)|\leq\N\kp^{2\theta_n},\qquad \re(\mu_{j,j',\pi\times\tilde{\pi}}(v))\geq -2\theta_n.
\end{equation}
The completed Rankin--Selberg $L$-function
\[\Lambda(s,\pi\times\tilde{\pi}) = (D_F^{n^{2}}\N\kq_{\pi\times\tilde{\pi}})^{s/2} s(s-1)L(s,\pi\times\tilde{\pi})L(s,\pi_{\infty}\times\tilde{\pi}_{\infty})\]
is entire of order $1$, and there exists a number $\varepsilon(\pi\times\tilde{\pi}) \in \{\pm 1\}$ such that $\Lambda(s,\pi\times\tilde{\pi})$ satisfies the functional equation
\[
\Lambda(s,\pi\times\tilde{\pi}) = \varepsilon(\pi\times\tilde{\pi}) \Lambda(1-s,\pi\times\tilde{\pi}).
\]

As with $L(s,\pi)$, we define the analytic conductor
\[
C(\pi\times\tilde{\pi},t):=D_F^{n^{2}}\N\kq_{\pi\times\tilde{\pi}}\prod_{ v}\prod_{j=1}^n \prod_{j'=1}^{n}(e+|it+\mu_{j,j',\pi\times\tilde{\pi}}( v)|^{d( v)}),\qquad C(\pi\times\tilde{\pi}):=C(\pi\times\tilde{\pi},0).
\]
The work of Bushnell and Henniart \cite{BH} and the proofs in Brumley \cite[Appendix]{Humphries} yields
\begin{equation}
\label{eqn:BH}
C(\pi\times\tilde{\pi},t)\leq C(\pi\times\tilde{\pi})(e+|t|)^{[F:\Q] n^{2}},\qquad C(\pi\times\tilde{\pi})\leq e^{O(n)}C(\pi)^{2n}.
\end{equation}

%

\section{A Brun--Titchmarsh bound}

We require a Brun--Titchmarsh type bound for the coefficients $\Lambda_{\pi\times\tilde{\pi}}(\kn)$.  In \cite[Theorem 2.4]{ST}, it is shown that if $F=\Q$, $\pi\in\mathfrak{F}_m$, $x\gg_m C(\pi\times\tilde{\pi})^{36m^2}$, and $1\leq T\leq x^{\frac{1}{9m^2}}$, then
\[
\sum_{x<n\leq xe^{1/T}}\Lambda_{\pi\times\tilde{\pi}}(n)\ll_m\frac{x}{T}.
\]
In this section, we prove a field-uniform version of this bound for $\pi\in\mathfrak{F}_n$ where the dependence of the implied constant on $n$ and $[F:\Q]$ is made clear.  In \cite{ST}, the implied constant was not pertinent, but here, the dependence of the implied constant on $n$ and $[F:\Q]$ impacts exactly how large we may take $\delta$ in \cref{thm:main_theorem}.

\begin{proposition}
\label{prop:BT}
	Let $\pi\in\mathfrak{F}_n$.  Suppose that $\log\log C(\pi\times\tilde{\pi})\gg n^4[F:\Q]^2$ with a sufficiently large implied constant.  If
	\[
	x\geq e^{O(n^4[F:\Q]^2)}C(\pi\times\tilde{\pi})^{32n^2[F:\Q]}\qquad\textup{and}\qquad 1\leq T\leq x^{\frac{1}{16n^2[F:\Q]}},
	\]
	then
	\[
	\sum_{x<\N\kn\leq xe^{1/T}}\Lambda_{\pi\times\tilde{\pi}}(\kn)\ll n^2[F:\Q]\frac{x}{T}.
	\]
\end{proposition}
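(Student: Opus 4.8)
The plan is to adapt the argument behind \cite[Theorem 2.4]{ST}, tracking the dependence on $n$ and $[F:\Q]$ throughout. Since $\Lambda_{\pi\times\tilde\pi}(\kn)\ge 0$, it suffices to bound from above the smoothed sum $S:=\sum_{\kn}\Lambda_{\pi\times\tilde\pi}(\kn)\,\Phi(\N\kn/x)$, where $\Phi$ is a fixed nonnegative smooth majorant of $\mathbf{1}_{[1,e^{1/T}]}$ with support in $[e^{-1/T},e^{2/T}]$; its Mellin transform $M_\Phi(s):=\int_0^\infty\Phi(u)u^{s-1}\,du$ then satisfies $M_\Phi(1)\asymp 1/T$ and, for any fixed $A>0$ and $\re(s)$ in a bounded range, $M_\Phi(s)\ll_A T^{-1}(1+|\im(s)|/T)^{-A}$. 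By Mellin inversion and \eqref{eqn:L'/L},
\[
S=\frac{1}{2\pi i}\int_{(1+1/\log x)}\Big(-\frac{L'}{L}(s,\pi\times\tilde\pi)\Big)M_\Phi(s)\,x^{s}\,ds .
\]
Substituting the Hadamard factorization of $L(s,\pi\times\tilde\pi)$ for $-\frac{L'}{L}(s,\pi\times\tilde\pi)$ and integrating term by term, I would obtain an explicit-formula expression
\[
S=M_\Phi(1)\,x-\sum_{\rho}M_\Phi(\rho)\,x^{\rho}+O\big(n^{2}[F:\Q]\,x^{2\theta_{n}}/T\big),
\]
with $\rho=\beta+i\gamma$ running over the nontrivial zeros; here the pole at $s=1$ yields the main term $M_\Phi(1)x\asymp x/T$, while the error term — collecting the simple pole at $s=0$, the finitely many trivial zeros with $\re(s)>0$ (located by \eqref{eqn:LRS_2} and \eqref{eqn:BH}), and the archimedean and constant terms — is $\ll n^{2}[F:\Q]\,x/T$ because $2\theta_{n}<1$.

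It then remains to show $\sum_{\rho}|M_\Phi(\rho)|x^{\beta}\ll n^{2}[F:\Q]\,x/T$. Using $|M_\Phi(\rho)|\ll T^{-1}\min\{1,(T/|\gamma|)^{A}\}$ and $x^{\beta}\le x$, and killing the range $|\gamma|>V$ by the decay of $M_\Phi$ together with $N_{\pi\times\tilde\pi}(0,V)\ll n^{2}[F:\Q]\,V\log(C(\pi\times\tilde\pi)V)$, this reduces to proving $\sum_{|\gamma|\le V}x^{\beta}\ll n^{2}[F:\Q]\,x$ for all $V$ up to a small power of $x$. Two analytic inputs are needed: a \emph{standard} zero-free region for $L(s,\pi\times\tilde\pi)$ — which follows from a de la Vallée Poussin argument using only the nonnegativity of $\Lambda_{\pi\times\tilde\pi}(\kn)$, the fact that $L(s,\pi\times\tilde\pi)$ has a simple pole at $s=1$ and no other pole, and \eqref{eqn:LRS_2}–\eqref{eqn:BH}; a possible exceptional real zero $\beta_1$ is harmless here, contributing only $x^{\beta_1}/T\le x/T$ — and a \emph{log-free} zero density estimate of the shape $N_{\pi\times\tilde\pi}(\sigma,V)\ll n^{2}[F:\Q]\,(C(\pi\times\tilde\pi)^{O(1)}V^{O(1)})^{O(n^{2}[F:\Q])(1-\sigma)}$, established unconditionally by the ``pole'' method of \cite{ST}, whose only genuine arithmetic input is the Chebyshev-type bound $\sum_{\N\kn\le y}\Lambda_{\pi\times\tilde\pi}(\kn)\ll y$, itself immediate from positivity and the pole at $s=1$. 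Feeding the density estimate through a partial summation over $\sigma$ (with the zero-free region excising the range $\sigma>1-c/\log(C(\pi\times\tilde\pi)V^{n^{2}[F:\Q]})$) gives $\sum_{|\gamma|\le V}x^{\beta}\ll n^{2}[F:\Q]\,x$. The hypotheses $\log\log C(\pi\times\tilde\pi)\gg n^{4}[F:\Q]^{2}$, $x\ge e^{O(n^{4}[F:\Q]^{2})}C(\pi\times\tilde\pi)^{32n^{2}[F:\Q]}$, and $T\le x^{1/(16n^{2}[F:\Q])}$ are precisely what force $n^{2}[F:\Q]\big(\log C(\pi\times\tilde\pi)+\log V\big)$ to be a small enough fraction of $\log x$ for this to work. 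No separate estimation of the prime-power terms $\kn=\kp^{k}$, $k\ge 2$, is needed: treating them individually via \eqref{eqn:LRS_2} would be far too lossy, but positivity lets us bound the whole of $S$ at once, so they are absorbed automatically.

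The hard part is the quantitative bookkeeping around the zero density estimate. A ``standard''-strength zero-free region alone is useless here, since it yields only a saving of $x^{-\delta}$ with $\delta$ of the size of a negative power of $\log x$; it is the log-freeness of the density estimate that converts this meagre saving into a bound of the right order, and its exponents — which scale with the degree $n^{2}$ of $L(s,\pi\times\tilde\pi)$ and with $[F:\Q]$ through the conductor inequalities \eqref{eqn:BH} — are what dictate both the admissible ranges of $x$ and $T$ and the linear dependence of the final implied constant on $n^{2}[F:\Q]$. Establishing that log-free zero density estimate with explicit, polynomial dependence on $C(\pi\times\tilde\pi)$ and uniformly in $n$ and $[F:\Q]$ — rather than the soft, field-fixed version available from \cite{ST} — is the real content; once it is in hand, the contour argument above is routine.
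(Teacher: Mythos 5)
Your proposal takes a genuinely different route from the paper, and unfortunately that route is circular in this context. You propose to prove the Brun--Titchmarsh bound via an explicit formula combined with a standard zero-free region and a log-free zero density estimate for $L(s,\pi\times\tilde{\pi})$. But in this paper the log-free zero density estimate (\cref{prop:LFZDE}) is proved \emph{using} \cref{prop:BT} as one of its essential inputs: the sketch of its proof explicitly says the argument replaces the use of \cite[Theorem 2.4]{ST} by \cref{prop:BT} when estimating \cite[Equation 4.6]{ST}. Your claim that the ``pole'' method of \cite{ST} needs only the Chebyshev-type bound $\sum_{\N\kn\le y}\Lambda_{\pi\times\tilde\pi}(\kn)\ll y$ coming from positivity and the pole at $s=1$ is not correct --- the mean-value estimates for the Dirichlet polynomials in that method require control of $\Lambda_{\pi\times\tilde\pi}$ over \emph{short} intervals of the form $(y, ye^{1/T}]$, with a saving of $1/T$, and that is precisely the content of \cref{prop:BT}. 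A long-range Chebyshev bound cannot substitute for it without losing the log-freeness that the whole argument hinges on. So the logical order has to run the other way: prove the Brun--Titchmarsh bound by elementary means \emph{first}, then feed it into the zero density estimate.

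The paper's actual proof of \cref{prop:BT} does not touch zeros at all; it is a Selberg sieve argument. The key steps are: (i) a convexity bound for $L(s,\pi\times\tilde\pi)$ with tracked dependence on $n$ and $[F:\Q]$ (\cref{lem:convexity}), used to estimate a smoothed sum weighted by the sieve's ``level-of-distribution'' quantity $g(\kd)$ (\cref{lem:local_density}); (ii) applying the Selberg sieve upper bound to $\sum_{\kn:\,\kp\mid\kn\Rightarrow\N\kp>z}\lambda_{\pi\times\tilde\pi}(\kn)\Phi(T\log(\N\kn/x))$ (\cref{lem:pre-BT}); and (iii) the decisive observation that taking $z=x^{1/(16n^2[F:\Q])}$ makes this sifted sum include all prime powers $\kp^k$ with $\N\kp^k\in(x,xe^{1/T}]$ and $k\le 16n^2[F:\Q]$, because $\N\kp>z$ forces such $k$ to be small. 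The combinatorial identity $\exp\bigl(\sum_{k\ge1}\tfrac{\Lambda_{\pi\times\tilde\pi}(\kp^k)}{k\log\N\kp}X^k\bigr)=1+\sum_{k\ge1}\lambda_{\pi\times\tilde\pi}(\kp^k)X^k$, together with nonnegativity, yields $\lambda_{\pi\times\tilde\pi}(\kp^k)\ge\Lambda_{\pi\times\tilde\pi}(\kp^k)/(k\log\N\kp)$, transferring the upper bound from $\lambda$ to $\Lambda$. The remaining prime powers with $k>16n^2[F:\Q]$ are handled by the trivial Luo--Rudnick--Sarnak bound \eqref{eqn:LRS_2} and a crude count of primes; the exponent $1-\frac{2}{n^2+1}+\frac{1}{16n^2[F:\Q]}<1$ makes that contribution negligible. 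No standard zero-free region, no Hadamard product, no density estimate for zeros is needed --- which is exactly what permits the result to serve as an \emph{input} to those later tools rather than depend on them.
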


Much like the work in \cite[Section 6]{ST}, we use the Selberg sieve.  The primary difference is the attention we pay to the dependence of implied constants on $n$ and $[F:\Q]$.  We begin with an effective bound for $L(s,\pi\times\tilde{\pi})$ with the dependence on $n$ and $[F:\Q]$ made clear.

\begin{lemma}
	\label{lem:convexity}
	Let $\pi\in\mathfrak{F}_n$.  If $\epsilon>0$ and
	\[
	\log\log C(\pi\times\tilde{\pi})\gg\frac{n^2[F:\Q]}{\epsilon}
	\]
	with a sufficiently large implied constant, then for $\frac{1}{2}\leq\sigma\leq 1$ and $t\in\R$, we have the bound
	\[
	\lim_{\sigma'\to\sigma^+}|(1-\sigma')L(\sigma'+it,\pi\times\tilde{\pi})|\ll e^{O(n^2[F:\Q](1-\sigma))}(C(\pi\times\tilde{\pi})(1+|t|)^{n^2[F:\Q]})^{\frac{1-\sigma}{2}+\epsilon}.
	\]
\end{lemma}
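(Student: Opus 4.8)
The plan is to prove \cref{lem:convexity} via the Phragmén--Lindelöf convexity principle applied to the completed Rankin--Selberg $L$-function, paying careful attention to how the Stirling-type estimates for the archimedean factor $L(s,\pi_\infty\times\tilde\pi_\infty)$ depend on $n^2$ and $[F:\Q]$. First I would set $\Lambda(s,\pi\times\tilde\pi) = (D_F^{n^2}\N\kq_{\pi\times\tilde\pi})^{s/2} s(s-1) L(s,\pi\times\tilde\pi) L(s,\pi_\infty\times\tilde\pi_\infty)$ and recall from the functional equation that $\Lambda(s,\pi\times\tilde\pi) = \varepsilon(\pi\times\tilde\pi)\Lambda(1-s,\pi\times\tilde\pi)$, so bounds on the line $\re(s)=1+\epsilon$ transfer to $\re(s)=-\epsilon$ after accounting for the conductor-dependent prefactor. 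On $\re(s)=1+\epsilon$ the Dirichlet series and Euler product converge absolutely, and using the Luo--Rudnick--Sarnak bound \eqref{eqn:LRS_2} one gets $|L(1+\epsilon+it,\pi\times\tilde\pi)| \ll \zeta_F(1+\epsilon)^{\text{something}} \ll (n^2/\epsilon)^{O(1)}$ or, more crudely, an absolute bound depending mildly on $n^2[F:\Q]/\epsilon$; the key is to keep this explicit. Combining the two vertical lines with the maximum modulus principle on the strip $-\epsilon \le \re(s) \le 1+\epsilon$ (after removing the trivial zeros / poles by the factor $s(s-1)$ and a suitable polynomial to kill the pole at $s=1$, hence the $\lim_{\sigma'\to\sigma^+}$ and the factor $(1-\sigma')$ in the statement) yields convexity-strength bounds for $L(\sigma+it,\pi\times\tilde\pi)$ throughout $\frac12 \le \sigma \le 1$.

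The main technical content is the archimedean estimate. I would write $L(s,\pi_\infty\times\tilde\pi_\infty) = \prod_v \prod_{j,j'} \Gamma_v(s+\mu_{j,j',\pi\times\tilde\pi}(v))$, and apply Stirling's formula $\log\Gamma(s) = (s-\tfrac12)\log s - s + O(1)$ to each of the $n^2[F:\Q]$ gamma factors (counting $d(v)$ appropriately for complex places). The ratio $L(1-s,\pi_\infty\times\tilde\pi_\infty)/L(s,\pi_\infty\times\tilde\pi_\infty)$ appearing after the functional equation contributes, for $\re(s)=\sigma$ and $|t|$ bounded relative to the conductor, a factor of size roughly $\prod_v \prod_{j,j'} (e+|it+\mu_{j,j',\pi\times\tilde\pi}(v)|^{d(v)})^{\frac12 - \sigma}$, which by the definition of the analytic conductor $C(\pi\times\tilde\pi,t)$ and the bound \eqref{eqn:BH} relating $C(\pi\times\tilde\pi,t)$ to $C(\pi\times\tilde\pi)(e+|t|)^{[F:\Q]n^2}$ is at most $(C(\pi\times\tilde\pi)(1+|t|)^{n^2[F:\Q]})^{\frac12-\sigma+\epsilon}$ up to an $e^{O(n^2[F:\Q](1-\sigma))}$ error coming from the $O(1)$ per gamma factor in Stirling and the conductor prefactor $(D_F^{n^2}\N\kq_{\pi\times\tilde\pi})^{(1-2\sigma)/2}$. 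The hypothesis $\log\log C(\pi\times\tilde\pi) \gg n^2[F:\Q]/\epsilon$ is exactly what is needed to absorb the polynomial-in-$n^2[F:\Q]/\epsilon$ losses from the $\re(s)=1+\epsilon$ line (where one has $|L| \ll (n^2[F:\Q])^{O(n^2[F:\Q]/\epsilon)}$ or similar) into $C(\pi\times\tilde\pi)^\epsilon$, since $(n^2[F:\Q])^{O(n^2[F:\Q]/\epsilon)} \le e^{\epsilon \log C(\pi\times\tilde\pi)}$ once $\log\log C(\pi\times\tilde\pi)$ dominates $\log(n^2[F:\Q]/\epsilon) \cdot (n^2[F:\Q]/\epsilon)$.

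I expect the main obstacle to be bookkeeping rather than conceptual: specifically, tracking the exact power of $n^2[F:\Q]/\epsilon$ that arises from (i) the bound for $L$ on $\re(s)=1+\epsilon$, coming from $\prod_v\prod_j (1-|\alpha_{j,\pi}(\kp)|\N\kp^{-1-\epsilon})^{-1}$ summed over all places and the $n^2$ local factors, and (ii) the accumulated $O(1)$ errors in Stirling across $n^2[F:\Q]$ gamma factors, and then verifying that the stated hypothesis on $\log\log C(\pi\times\tilde\pi)$ with a sufficiently large implied constant suffices to swallow all of these into the $C(\pi\times\tilde\pi)^\epsilon$ and $e^{O(n^2[F:\Q](1-\sigma))}$ terms. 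One must also be slightly careful near $\sigma=1$, where the pole of $\Lambda(s,\pi\times\tilde\pi)$ at $s=1$ is removed by $s(s-1)$ but $L(s,\pi\times\tilde\pi)$ itself is regular there (the pole of the completed function being entirely in the gamma factor at $s=1$ only via the $s-1$ I inserted — actually $L(s,\pi\times\tilde\pi)$ has a simple pole at $s=1$, so the factor $(1-\sigma')$ and the limit in the statement handle this cleanly), and to check the estimate is uniform as $\epsilon\to 0$ in the sense that the implied constants are absolute. With these points addressed, Phragmén--Lindelöf delivers the claimed bound.
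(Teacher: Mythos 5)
Your plan has a genuine gap at the crucial step of bounding $L(1+\epsilon+it,\pi\times\tilde{\pi})$, and it misses what the hypothesis on $\log\log C(\pi\times\tilde{\pi})$ is really for. You assert that termwise use of the Luo--Rudnick--Sarnak bound \eqref{eqn:LRS_2} yields $|L(1+\epsilon+it,\pi\times\tilde{\pi})|\ll(n^2/\epsilon)^{O(1)}$. But \eqref{eqn:LRS_2} gives $|\alpha_{j,j',\pi\times\tilde{\pi}}(\kp)|\leq\N\kp^{2\theta_n}$ with $2\theta_n=1-\tfrac{2}{n^2+1}$, so the termwise-bounded Euler product $\prod_{\kp}\prod_{j,j'}(1-\N\kp^{2\theta_n-\sigma})^{-1}$ diverges for all $\sigma<1+2\theta_n=2-\tfrac{2}{n^2+1}$, which for $n\geq 2$ is well to the right of $\sigma=1+\epsilon$. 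Absolute convergence of the Dirichlet series for $\re(s)>1$ is true (Jacquet--Shalika) but does not come from LRS applied coefficient by coefficient, and making it quantitative near $\sigma=1$ is exactly the hard part. Even using the nonnegativity $\lambda_{\pi\times\tilde{\pi}}(\kn)\geq 0$ to reduce to bounding $L(1+\epsilon,\pi\times\tilde{\pi})$ from above, one must control $L(\sigma,\pi\times\tilde{\pi})\sim\kappa/(\sigma-1)$ near $\sigma=1$, i.e.\ bound the residue $\kappa$ uniformly in the conductor, and that is a nontrivial theorem, not bookkeeping.

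The paper circumvents this entirely. It invokes Li's theorem to get $\lim_{\sigma'\to\sigma^+}|(1-\sigma')L(\sigma',\pi\times\tilde{\pi})|\ll\exp\bigl(\Cr{Li}\,n^2[F:\Q]\tfrac{\log C(\pi\times\tilde{\pi})}{\log\log C(\pi\times\tilde{\pi})}\bigr)$ for $1\leq\sigma\leq 3$; the hypothesis $\log\log C(\pi\times\tilde{\pi})\gg n^2[F:\Q]/\epsilon$ is precisely what converts this into $C(\pi\times\tilde{\pi})^{\epsilon/2}$ — it is not there to absorb Stirling errors across $n^2[F:\Q]$ gamma factors as you suggest. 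Then rather than working with the functional equation directly via Stirling, the paper cites \cite[Theorem~1.1]{ST} to bound $|L(\tfrac12,\pi\times\tilde{\pi})|$ in terms of $C(\pi\times\tilde{\pi})^{1/4}|L(\tfrac32,\pi\times\tilde{\pi})|^2$, and runs Phragm\'en--Lindel\"of on the short strip $[\tfrac12,1]$ with abscissae $\sigma=\tfrac12$ and $\sigma=1$; the extension to general $t$ is done by shifting the spectral parameters by $it$ and using \eqref{eqn:BH}. Your version might be salvageable if you replaced the faulty LRS estimate at $\sigma=1+\epsilon$ with Li's bound on the line $\sigma=1$ and reran convexity on $[0,1]$, but as written the proposal does not close.
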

\begin{proof}
	It follows from the work of Li \cite[Theorem 2]{Li} (with straightforward changes in order to apply to arbitrary number fields $F$) that there exists an absolute constant $\Cl[abcon]{Li}>0$ such that
	\[
	\lim_{\sigma'\to\sigma^+}|(1-\sigma')L(\sigma',\pi\times\tilde{\pi})|\ll \exp\Big(\Cr{Li}n^2[F:\Q]\frac{\log C(\pi\times\tilde{\pi})}{\log\log C(\pi\times\tilde{\pi})}\Big),\qquad 1\leq\sigma\leq 3.
	\]
	If $\log\log C(\pi\times\tilde{\pi})\geq 2\Cr{Li}n^2[F:\Q]/\epsilon$, then
	\[
	\exp\Big(\Cr{Li}n^2[F:\Q]\frac{\log C(\pi\times\tilde{\pi})}{\log\log C(\pi\times\tilde{\pi})}\Big)\leq C(\pi\times\tilde{\pi})^{\frac{\epsilon}{2}}.
	\]
	It follows from work of Soundararajan and Thorner \cite[Theorem 1.1 with $\delta=0$]{ST} (with straightforward changes in order to apply to arbitrary number fields $F$) and the above analysis that
	\[
	|L(\tfrac{1}{2},\pi\times\tilde{\pi})|\ll e^{O(n^2[F:\Q])}C(\pi\times\tilde{\pi})^{\frac{1}{4}}|L(\tfrac{3}{2},\pi\times\tilde{\pi})|^2\leq e^{O(n^2[F:\Q])}C(\pi\times\tilde{\pi})^{\frac{1}{4}+\epsilon}.
	\]
	By the Phragm{\'e}n--Lindel{\"o}f principle, we have the bound
	\[
	\lim_{\sigma'\to \sigma^+}|(1-\sigma')L(\sigma',\pi\times\tilde{\pi})|\leq e^{O(n^2[F:\Q](1-\sigma))}C(\pi\times\tilde\pi)^{\frac{1-\sigma}{2}+\epsilon},\qquad \tfrac{1}{2}\leq\sigma\leq 1.
	\]
	Since our results are uniform in the analytic conductor $C(\pi\times\tilde{\pi})$, and hence in the spectral parameters $\mu_{j,j',\pi\times\tilde{\pi}}(v)$, we can shift all of the spectral parameters by $it$, thus proving that
	\[
	\lim_{\sigma'\to \sigma^+}\Big|\frac{1-\sigma'-it}{1+\sigma'+it}L(\sigma'+it,\pi\times\tilde{\pi})\Big|\leq e^{O(n^2[F:\Q](1-\sigma))}C(\pi\times\tilde\pi,t)^{\frac{1-\sigma}{2}+\epsilon},\qquad \tfrac{1}{2}\leq\sigma\leq 1.
	\]
	We conclude the desired result by invoking \eqref{eqn:BH}.
\end{proof}

For a squarefree integral ideal $\kd$ of $\cO_F$, define
\[
g_{\kd}(s,\pi\times\tilde{\pi}):=\prod_{\kp|\kd}(1-L(s,\pi_{\kp}\times\tilde{\pi}_{\kp})^{-1}),\qquad g(\kd):=g_{\kd}(1,\pi\times\tilde{\pi}).
\]
We require some estimates for $g_{\kd}(s,\pi\times\tilde{\pi})$ and $g(\kd)$.
\begin{lemma}
\label{lem:g_def}
	Suppose that $\log\log C(\pi\times\tilde{\pi})\gg n^4[F:\Q]^2$ with a sufficiently large implied constant.  Let $\kd\neq \cO_F$ be a squarefree integral ideal.  We have $0\leq g(\kd)<1$, $g(\cO_F)=1$, and
	\[
	|g_{\kd}(s,\pi\times\tilde{\pi})|\leq C(\pi\times\tilde{\pi})^{\frac{1}{8n^2[F:\Q]}}\N\kd^{\frac{1}{4}},\qquad \re(s)= 1-\frac{1}{2n^2[F:\Q]}.
	\]
\end{lemma}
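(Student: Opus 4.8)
The plan is to treat the three assertions in turn, extracting the two inequalities from the local Euler factors and the analytic bound from \cref{lem:convexity}. First I would record that, since $\Lambda_{\pi\times\tilde\pi}(\kn)\geq 0$, the logarithmic derivative $-\frac{L'}{L}(s,\pi_\kp\times\tilde\pi_\kp)$ has nonnegative Dirichlet coefficients at every prime $\kp$, so $L(1,\pi_\kp\times\tilde\pi_\kp)\geq 1$; hence $1-L(1,\pi_\kp\times\tilde\pi_\kp)^{-1}\in[0,1)$ for each $\kp\mid\kd$. Taking the product over $\kp\mid\kd$ gives $0\leq g(\kd)<1$ for $\kd\neq\cO_F$, while the empty product gives $g(\cO_F)=1$. (One also checks $g(\kd)<1$ strictly as soon as $\kd$ has at least one prime factor, using that $L(1,\pi_\kp\times\tilde\pi_\kp)$ is finite by \eqref{eqn:LRS_2}, so the local Euler factor does not vanish at $s=1$.)

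For the bound on $g_\kd(s,\pi\times\tilde\pi)$ on the line $\re(s)=1-\frac{1}{2n^2[F:\Q]}$, I would write
\[
g_\kd(s,\pi\times\tilde\pi)=\prod_{\kp\mid\kd}\frac{L(s,\pi_\kp\times\tilde\pi_\kp)-1}{L(s,\pi_\kp\times\tilde\pi_\kp)}
=\frac{\prod_{\kp\mid\kd}\bigl(L(s,\pi_\kp\times\tilde\pi_\kp)-1\bigr)}{\prod_{\kp\mid\kd}L(s,\pi_\kp\times\tilde\pi_\kp)}.
\]
The denominator I bound below by comparing $\prod_{\kp\mid\kd}L(s,\pi_\kp\times\tilde\pi_\kp)$ to the full $L(s,\pi\times\tilde\pi)$: the ratio is $\prod_{\kp\nmid\kd}L(s,\pi_\kp\times\tilde\pi_\kp)$, which on $\re(s)=1-\frac{1}{2n^2[F:\Q]}$ is controlled using $|\alpha_{j,j',\pi\times\tilde\pi}(\kp)|\leq \N\kp^{2\theta_n}$ from \eqref{eqn:LRS_2} together with $2\theta_n<1-\frac{1}{2n^2[F:\Q]}$ (valid since $\theta_n=\frac12-\frac{1}{n^2+1}$), so that this Euler product converges and is bounded in modulus by an absolute constant times a power of $C(\pi\times\tilde\pi)$ of exponent $O(1/(n^2[F:\Q]))$; combining with the absolute lower bound for $|L(s,\pi\times\tilde\pi)|$ that follows by taking $\sigma=1-\frac{1}{2n^2[F:\Q]}$ and using the lower-bound half of \cref{lem:convexity} (applied with, say, $\epsilon=\tfrac{1}{16n^2[F:\Q]}$, which is permitted once $\log\log C(\pi\times\tilde\pi)\gg n^4[F:\Q]^2$). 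The numerator factors $L(s,\pi_\kp\times\tilde\pi_\kp)-1$ are each $O(\N\kp^{-\re(s)+2\theta_n+o(1)})$ in modulus, and since $\kd$ has at most $O(\log\N\kd/\log\log\N\kd)$ prime factors, the product over $\kp\mid\kd$ is bounded by $\N\kd^{1/4}$ after a crude estimate (using $-\re(s)+2\theta_n<0$ to kill the large primes and a trivial bound $|L(s,\pi_\kp\times\tilde\pi_\kp)-1|\leq 2$ otherwise). Collecting the exponents and using $1-\sigma=\frac{1}{2n^2[F:\Q]}$, the $C(\pi\times\tilde\pi)$-power works out to $C(\pi\times\tilde\pi)^{1/(8n^2[F:\Q])}$ after absorbing the $e^{O(n^2[F:\Q](1-\sigma))}=e^{O(1)}$ factor and the convergent Euler-product constant into the exponent, which is legitimate once $\log\log C(\pi\times\tilde\pi)$ is large enough.

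The main obstacle I expect is bookkeeping the exponents so that the final bound has the stated clean form $C(\pi\times\tilde\pi)^{1/(8n^2[F:\Q])}\N\kd^{1/4}$ rather than something with stray additive constants: the lower bound for $|L(s,\pi\times\tilde\pi)|$ coming from \cref{lem:convexity} contributes $C(\pi\times\tilde\pi)^{-(1-\sigma)/2-\epsilon}$ in the denominator, i.e.\ $C(\pi\times\tilde\pi)^{(1-\sigma)/2+\epsilon}=C(\pi\times\tilde\pi)^{1/(4n^2[F:\Q])+\epsilon}$ in the numerator of the estimate, and one needs the $\epsilon$ and the $e^{O(n^2[F:\Q](1-\sigma))}$ and the tail Euler product all to fit inside the remaining budget of $\frac{1}{8n^2[F:\Q]}$; this forces the hypothesis $\log\log C(\pi\times\tilde\pi)\gg n^4[F:\Q]^2$ with a sufficiently large implied constant (to make $e^{O(n^2[F:\Q])}\leq C(\pi\times\tilde\pi)^{o(1/(n^2[F:\Q]))}$) and a careful choice of $\epsilon$ of size comparable to $1/(n^2[F:\Q])$. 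The $\N\kd^{1/4}$ part is comparatively soft, since we have a whole exponent $\tfrac14$ of room against the genuine decay $\N\kp^{-(1-\sigma)+2\theta_n}$ and we only ever need a very lossy bound there.
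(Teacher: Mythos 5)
Your proof of the first two claims ($0\le g(\kd)<1$, $g(\cO_F)=1$) is fine, though the paper just notes this follows from \eqref{eqn:LRS_2}: $|\alpha_{j,j',\pi\times\tilde\pi}(\kp)\N\kp^{-1}|<1$ forces each local factor $L(1,\pi_\kp\times\tilde\pi_\kp)\in(0,\infty)$ with $L(1,\pi_\kp\times\tilde\pi_\kp)\ge 1$ by coefficient nonnegativity.

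The estimate on $g_\kd$ at $\re(s)=1-\tfrac{1}{2n^2[F:\Q]}$, however, contains two errors that break the argument. First, you invoke ``the lower-bound half of \cref{lem:convexity},'' but \cref{lem:convexity} is a pure \emph{upper} bound (a subconvexity estimate); it has no lower-bound half, and indeed no uniform pointwise lower bound for $|L(s,\pi\times\tilde\pi)|$ at $\sigma=1-\tfrac{1}{2n^2[F:\Q]}$ can exist, because that abscissa lies well to the left of the zero-free region of \cref{thm:ZFR} (whose width is only $\asymp 1/\log C(\pi\times\tilde\pi)$), so $L$ may vanish arbitrarily close to that vertical line. Second, you claim the tail Euler product $\prod_{\kp\nmid\kd}L(s,\pi_\kp\times\tilde\pi_\kp)$ converges on $\re(s)=1-\tfrac{1}{2n^2[F:\Q]}$; it does not. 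Convergence would require $\sum_\kp \N\kp^{2\theta_n-\sigma}<\infty$, i.e.\ $\sigma-2\theta_n>1$, but at the indicated $\sigma$ one has $\sigma-2\theta_n=\tfrac{2}{n^2+1}-\tfrac{1}{2n^2[F:\Q]}\in(0,1)$. So the whole strategy of relating $\prod_{\kp\mid\kd}L_\kp$ to the global $L$-function inside the critical strip fails.

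The correct route, and what the paper does, is purely local and much cheaper. Each factor of $g_\kd$ is bounded directly from the Satake parameter bound \eqref{eqn:LRS_2}:
\[
|1-L(s,\pi_\kp\times\tilde\pi_\kp)^{-1}|\le 1+\Big(1+\N\kp^{2\theta_n-\sigma}\Big)^{n^2}\le 2^{n^2+2}
\]
since $2\theta_n-\sigma<0$. One then bounds the number of distinct prime divisors of $\kd$ by $6e^{2/\epsilon}[F:\Q]+\epsilon\log\N\kd$ (as in \cite[Lemma 1.13b]{Weiss}) with $\epsilon=\tfrac{1}{4(n^2+2)}$, giving $\prod_{\kp\mid\kd}2^{n^2+2}\le \N\kd^{1/4}\,e^{6e^{8(n^2+2)}(n^2+2)[F:\Q]}$, and the exponential factor is absorbed into $C(\pi\times\tilde\pi)^{1/(8n^2[F:\Q])}$ using the hypothesis $\log\log C(\pi\times\tilde\pi)\gg n^4[F:\Q]^2$. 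No analytic information about $L(s,\pi\times\tilde\pi)$ beyond \eqref{eqn:LRS_2} is needed; \cref{lem:convexity} plays no role in this lemma.
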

\begin{proof}
	The bound on $g(\kd)$ follows immediately from \eqref{eqn:LRS_2}.  Also, by \eqref{eqn:LRS_2} and the fact that $\N\kp\geq 2$ for all prime ideals $\kp$, we have the bound
	\begin{align*}
	|g_{\kd}(1-\tfrac{1}{2n^2[F:\Q]}+it,\pi\times\tilde{\pi})|\leq \prod_{\kp|\kd}\Big(1+\Big(1+\frac{\N\kp^{1-\frac{2}{n^2+1}}}{\N\kp^{1-\frac{1}{2n^2[F:\Q]}}}\Big)^{n^2}\Big)\leq \prod_{\kp|\kd}2^{n^2+2}.
	\end{align*}
	The proof of \cite[Lemma 1.13b]{Weiss} shows that for all $\epsilon>0$, the number of distinct prime ideal divisors of $\kd$ is bounded by $6e^{2/\epsilon}[F:\Q]+\epsilon\log\N\kd$.  We apply this with $\epsilon=\frac{1}{4(n^2+2)}$ to bound the above display by
	\[
	\N\kd^{\frac{1}{4}} e^{6e^{8(n^2+2)}(n^2+2)[F:\Q]}.
	\]
	This is bounded as claimed when the implied constant for the lower bound on $C(\pi\times\tilde{\pi})$ is made sufficiently large.
\end{proof}

Let $\Phi$ be a smooth nonnegative function supported in $(-2,2)$, and let
\begin{equation}
\label{eqn:check_def}
{\check \Phi}(s) = \int_{-\infty}^{\infty}\Phi(y)e^{sy}dy.
\end{equation}
Then ${\check \Phi}(s)$ is entire, and for any integer $k\geq 1$, integration by parts yields
\begin{equation}
\label{eqn:check_bound}
|{\check \Phi}(s)|\ll_{\Phi,k}e^{2|\re(s)|}|s|^{-k}.
\end{equation}
Let $T\geq 1$.  By Mellin inversion, we have
\[
\Phi(T\log x)=\frac{1}{2\pi i T}\int_{c-i\infty}^{c+i\infty}{\check\Phi}(s/T)x^{-s}ds
\]
for any $x>0$ and $c\in\R$.

\begin{lemma}
\label{lem:local_density}
	Let $\pi\in\mathfrak{F}_n$, and let $\kd$ be a squarefree integral ideal of $\cO_F$.  Let $x,T\geq 1$, and let $\log\log C(\pi\times\tilde{\pi})\gg n^4[F:\Q]^2$ with a sufficiently large implied constant.  We have
	\[
	\Big|\sum_{\substack{\kd|\kn}}\lambda_{\pi\times\tilde{\pi}}(\kn)\Phi\Big(T\log\frac{\N\kn}{x}\Big)-\kappa g(\kd)\frac{x}{T}{\check\Phi}(1/T)\Big|\ll x^{1-\frac{1}{2n^2[F:\Q]}}T^{\frac{3}{8}}C(\pi\times\tilde{\pi})^{\frac{1}{2n^2[F:\Q]}}\N\kd^{\frac{1}{4}},
	\]
	where $\kappa>0$ is the residue at $s=1$ of $L(s,\pi\times\tilde{\pi})$.
\end{lemma}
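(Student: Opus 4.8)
The plan is to realize the left-hand side as a contour integral and shift the contour past the simple pole of $L(s,\pi\times\tilde\pi)$ at $s=1$. The starting point is the Dirichlet series identity
\[
\sum_{\kd\mid\kn}\frac{\lambda_{\pi\times\tilde\pi}(\kn)}{\N\kn^{s}}=g_{\kd}(s,\pi\times\tilde\pi)\,L(s,\pi\times\tilde\pi),\qquad\re(s)>1,
\]
obtained by factoring the Euler product: for $\kp\mid\kd$ one discards the $k=0$ term of the local factor \eqref{eqn:RS_Dirichlet_series}, i.e.\ one replaces $L(s,\pi_{\kp}\times\tilde\pi_{\kp})$ by $L(s,\pi_{\kp}\times\tilde\pi_{\kp})-1=L(s,\pi_{\kp}\times\tilde\pi_{\kp})\bigl(1-L(s,\pi_{\kp}\times\tilde\pi_{\kp})^{-1}\bigr)$, and the product of these correction factors over $\kp\mid\kd$ is precisely $g_{\kd}(s,\pi\times\tilde\pi)$. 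Since each $L(s,\pi_{\kp}\times\tilde\pi_{\kp})^{-1}$ is a polynomial in $\N\kp^{-s}$, the function $g_{\kd}(s,\pi\times\tilde\pi)$ is entire, so the identity persists as an identity of meromorphic functions. Substituting the Mellin inversion formula for $\Phi(T\log(\N\kn/x))$ recorded before the lemma and interchanging the sum over $\kn$ (which is finite, as $\Phi$ has compact support) with the integral gives, for any $c>1$,
\[
\sum_{\kd\mid\kn}\lambda_{\pi\times\tilde\pi}(\kn)\,\Phi\Bigl(T\log\tfrac{\N\kn}{x}\Bigr)=\frac{1}{2\pi iT}\int_{(c)}{\check\Phi}(s/T)\,x^{s}\,g_{\kd}(s,\pi\times\tilde\pi)\,L(s,\pi\times\tilde\pi)\,ds.
\]

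Next I would move the contour to the line $\re(s)=\sigma_{0}:=1-\frac{1}{2n^{2}[F:\Q]}$. In the strip $\sigma_{0}\le\re(s)\le c$ the integrand is holomorphic apart from the simple pole of $L(s,\pi\times\tilde\pi)$ at $s=1$: the factors ${\check\Phi}(s/T)$, $x^{s}$, and $g_{\kd}(s,\pi\times\tilde\pi)$ are entire, while $L(s,\pi\times\tilde\pi)$ is holomorphic for $\re(s)>2\theta_{n}$ except at $s=1$, and $2\theta_{n}=1-\frac{2}{n^{2}+1}<1-\frac{1}{2n^{2}[F:\Q]}$ because $4n^{2}[F:\Q]>n^{2}+1$. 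The horizontal segments vanish in the limit because ${\check\Phi}$ decays faster than any fixed power of $|s|$ by \eqref{eqn:check_bound}, whereas $L(s,\pi\times\tilde\pi)$ grows at most polynomially in $|\im(s)|$ and $g_{\kd}(s,\pi\times\tilde\pi)$ stays bounded on the strip (by \eqref{eqn:LRS_2}, as $\sigma_{0}>2\theta_{n}$). The residue at $s=1$ equals ${\check\Phi}(1/T)\,x\,g(\kd)\,\kappa$, so after division by $T$ it produces exactly the asserted main term $\kappa g(\kd)\,\tfrac{x}{T}\,{\check\Phi}(1/T)$.

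It remains to bound the shifted integral on $\re(s)=\sigma_{0}$. There $|x^{s}|=x^{1-1/(2n^{2}[F:\Q])}$; \cref{lem:g_def} gives $|g_{\kd}(s,\pi\times\tilde\pi)|\le C(\pi\times\tilde\pi)^{1/(8n^{2}[F:\Q])}\N\kd^{1/4}$; and since $L(s,\pi\times\tilde\pi)$ has no pole on this line, \cref{lem:convexity} with a convenient choice $\epsilon=\frac{1}{16n^{2}[F:\Q]}$ (legitimate because the hypothesis yields $\log\log C(\pi\times\tilde\pi)\gg n^{4}[F:\Q]^{2}\gg n^{2}[F:\Q]/\epsilon$) gives
\[
|L(\sigma_{0}+it,\pi\times\tilde\pi)|\ll n^{2}[F:\Q]\bigl(C(\pi\times\tilde\pi)(1+|t|)^{n^{2}[F:\Q]}\bigr)^{5/(16n^{2}[F:\Q])},
\]
the prefactor $n^{2}[F:\Q]$ coming from dividing out $1-\sigma_{0}$. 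Carrying out the $t$-integral via \eqref{eqn:check_bound} with $k=2$, so that $\int_{\R}|{\check\Phi}((\sigma_{0}+it)/T)|(1+|t|)^{5/16}\,dt\ll T^{1+5/16}$, the error is
\[
\ll n^{2}[F:\Q]\;x^{1-\frac{1}{2n^{2}[F:\Q]}}\;T^{5/16}\;C(\pi\times\tilde\pi)^{\frac{7}{16n^{2}[F:\Q]}}\;\N\kd^{1/4}.
\]
Since the hypothesis forces $\log C(\pi\times\tilde\pi)\ge\exp(cn^{4}[F:\Q]^{2})$ for an absolute $c>0$, the prefactor $n^{2}[F:\Q]$ (together with the absolute and $\Phi$-dependent constants) is at most $C(\pi\times\tilde\pi)^{1/(16n^{2}[F:\Q])}$, which raises the conductor exponent to $\frac{1}{2n^{2}[F:\Q]}$; and $T^{5/16}\le T^{3/8}$ as $T\ge1$. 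This gives the claimed bound, with implied constant depending only on $\Phi$.

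The step I expect to be the main obstacle is the exponent bookkeeping in the last paragraph: one must pick $\epsilon$ in \cref{lem:convexity} small enough that the conductor exponents contributed by \cref{lem:convexity} and \cref{lem:g_def} sum to strictly less than $\frac{1}{2n^{2}[F:\Q]}$ — leaving just enough slack to swallow the stray $n^{2}[F:\Q]$ from $(1-\sigma_{0})^{-1}$ — while simultaneously keeping $n^{2}[F:\Q]/\epsilon$ within the hypothesized size of $\log\log C(\pi\times\tilde\pi)$ and keeping the resulting power of $T$ at most $\frac38$. The remaining ingredients — the formal Dirichlet series factorization, the residue computation, and the justification of the contour shift via the superpolynomial decay of ${\check\Phi}$ — are routine.
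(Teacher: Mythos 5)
Your proof is correct and follows the same route as the paper's: realize the left side via Mellin inversion, shift the contour to $\re(s) = 1 - \tfrac{1}{2n^2[F:\Q]}$ collecting the residue $\kappa g(\kd)\tfrac{x}{T}\check\Phi(1/T)$ at $s=1$, and bound the shifted integral by combining \cref{lem:convexity}, \cref{lem:g_def}, and the decay of $\check\Phi$. The only deviation is that you take $\epsilon = \tfrac{1}{16n^2[F:\Q]}$ rather than the paper's $\epsilon = \tfrac{1}{8n^2[F:\Q]}$, which is in fact the more careful choice: it leaves the slack $C(\pi\times\tilde\pi)^{1/(16n^2[F:\Q])}$ needed to absorb the stray $n^2[F:\Q]$ arising from $(1-\sigma_0)^{-1}$ in \cref{lem:convexity}, a factor the paper's sketch silently drops.
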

\begin{proof}
The quantity to be estimated equals
\[
\frac{1}{2\pi iT}\int_{1-\frac{1}{2n^2[F:\Q]}-i\infty}^{1-\frac{1}{2n^2[F:\Q]}+i\infty}L(s,\pi\times\tilde{\pi}){\check\Phi}(s/T)x^s g_{\kd}(s,\pi\times\tilde{\pi})ds.
\]
By \cref{lem:convexity} with $\epsilon=\frac{1}{8n^2[F:\Q]}$, \cref{lem:g_def}, and \eqref{eqn:check_bound} with $k=0$ and $2$, this is
\[
\ll\frac{x^{1-\frac{1}{2n^2[F:\Q]}}}{T}C(\pi\times\tilde{\pi})^{\frac{1}{2n^2[F:\Q]}}\N\kd^{\frac{1}{4}}\int_{-\infty}^{\infty}(1+|t|)^{\frac{3}{8}}\min\Big\{1,\frac{T^2}{(1+|t|)^2}\Big\} dt,
\]
which is bounded as claimed.
\end{proof}

\begin{lemma}
\label{lem:pre-BT}
	Suppose that $\log\log C(\pi\times\tilde{\pi})\gg n^4[F:\Q]^2$ and $z\geq e^{O(n^2[F:\Q])}C(\pi\times\tilde{\pi})^{2}$, each with a sufficiently large implied constant.  If $x>0$ and $T\geq 1$, then
	\[
\sum_{\substack{\kn \\ \kp|\kn\implies \N\kp>z}}\lambda_{\pi\times\tilde{\pi}}(\kn)\Phi\Big(T\log\frac{\N\kn}{x}\Big)\leq \frac{3x}{T\log z}{\check\Phi}(1/T)+O(x^{1-\frac{1}{2n^2[F:\Q]}}T^{\frac{3}{8}}C(\pi\times\tilde{\pi})^{\frac{1}{2n^2[F:\Q]}}z^5).
\]
\end{lemma}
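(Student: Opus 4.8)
The plan is to run the Selberg sieve to bound the sum over integral ideals all of whose prime divisors exceed $z$, using \cref{lem:local_density} to evaluate the relevant divisor-weighted sums. First I would introduce the Selberg sieve weights $(\rho_{\kd})_{\N\kd\leq z}$ supported on squarefree ideals $\kd$ with $\N\kd\leq z$ and $\rho_{\cO_F}=1$, and bound the indicator of the set $\{\kn : \kp\mid\kn\Rightarrow\N\kp>z\}$ by $\big(\sum_{\kd\mid\kn,\,\N\kd\leq z}\rho_{\kd}\big)^2$. Expanding the square and interchanging the order of summation gives
\[
\sum_{\substack{\kn\\ \kp\mid\kn\Rightarrow\N\kp>z}}\lambda_{\pi\times\tilde{\pi}}(\kn)\Phi\Big(T\log\frac{\N\kn}{x}\Big)\leq \sum_{\N\kd_1,\N\kd_2\leq z}\rho_{\kd_1}\rho_{\kd_2}\sum_{[\kd_1,\kd_2]\mid\kn}\lambda_{\pi\times\tilde{\pi}}(\kn)\Phi\Big(T\log\frac{\N\kn}{x}\Big).
\]
Applying \cref{lem:local_density} with $\kd=[\kd_1,\kd_2]$, the main term is $\frac{x}{T}\check\Phi(1/T)\sum_{\N\kd_1,\N\kd_2\leq z}\rho_{\kd_1}\rho_{\kd_2}\,\kappa\,g([\kd_1,\kd_2])$, and the error term contributes $\ll x^{1-\frac{1}{2n^2[F:\Q]}}T^{3/8}C(\pi\times\tilde{\pi})^{\frac{1}{2n^2[F:\Q]}}\sum_{\N\kd_1,\N\kd_2\leq z}|\rho_{\kd_1}\rho_{\kd_2}|\,\N[\kd_1,\kd_2]^{1/4}$.

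Next I would carry out the standard diagonalization of the quadratic form $Q(\rho)=\sum_{\kd_1,\kd_2}\rho_{\kd_1}\rho_{\kd_2}g([\kd_1,\kd_2])$: writing $g$ multiplicatively and introducing the arithmetic function $h$ defined by $\frac{1}{g(\kp)}-1$ on primes (extended multiplicatively), one diagonalizes via $y_{\ke}=\mu(\ke)g(\ke)\sum_{\ke\mid\kd}\rho_{\kd}$, so that $Q(\rho)=\sum_{\N\ke\leq z}y_{\ke}^2/h(\ke)$ (with the usual convention handling the $g$-values), and the minimum subject to $\rho_{\cO_F}=1$ is $1/\sum_{\N\ke\leq z}1/h(\ke)$. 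The point is that $1-g(\kp)=L(1,\pi_{\kp}\times\tilde{\pi}_{\kp})^{-1}=\prod_{j,j'}(1-\alpha_{j,j',\pi\times\tilde\pi}(\kp)\N\kp^{-1})$, which for $\kp\nmid\kq_\pi$ equals $1-\lambda_{\pi\times\tilde\pi}(\kp)\N\kp^{-1}+O(\N\kp^{-3/2})$ by \eqref{eqn:LRS_2}; hence $h(\kp)\asymp \N\kp/\lambda_{\pi\times\tilde\pi}(\kp)$ up to bounded factors, and $\sum_{\N\ke\leq z}1/h(\ke)\geq \sum_{\N\kp\leq z}\lambda_{\pi\times\tilde\pi}(\kp)\N\kp^{-1}+O(1)\geq \tfrac{1}{3}\log z$ for $z$ large (here I would invoke the prime ideal theorem / Rankin–Selberg lower bound $\sum_{\N\kp\leq z}\lambda_{\pi\times\tilde\pi}(\kp)\log\N\kp/\N\kp \sim \log z$, which is where the hypothesis $z\geq e^{O(n^2[F:\Q])}C(\pi\times\tilde\pi)^2$ enters to make the secondary terms admissible). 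Combined with $\kappa\,Q(\rho_{\mathrm{opt}})\leq \kappa/(\tfrac13\log z)$ and the standard bound $\kappa\ll C(\pi\times\tilde\pi)^{o(1)}$ — more precisely $\kappa\leq 1$ up to the conductor-dependent factors already absorbed, using the residue bounds from \cref{lem:convexity} — the main term is at most $\frac{3x}{T\log z}\check\Phi(1/T)$ after the hypotheses on $C(\pi\times\tilde\pi)$ are used to kill the lower-order contributions.

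For the error term, the Selberg weights satisfy the classical bound $|\rho_{\kd}|\leq 1$, so $\sum_{\N\kd_1,\N\kd_2\leq z}|\rho_{\kd_1}\rho_{\kd_2}|\N[\kd_1,\kd_2]^{1/4}\leq \big(\sum_{\N\kd\leq z}\N\kd^{1/4}\big)^2\ll z^{2}\cdot z^{1/2}\cdot(\log z)^{O([F:\Q])}\ll z^5$, crudely, using the Dedekind zeta bound on the number of ideals of norm up to $z$; this yields the stated error $O(x^{1-\frac{1}{2n^2[F:\Q]}}T^{3/8}C(\pi\times\tilde\pi)^{\frac{1}{2n^2[F:\Q]}}z^5)$. \textbf{The main obstacle} is the lower bound $\sum_{\N\ke\leq z}1/h(\ke)\geq\tfrac13\log z$: one must control the fluctuation of the partial sums of $\lambda_{\pi\times\tilde\pi}(\kp)/\N\kp$ uniformly in the analytic conductor, ruling out a conspiracy that makes this sum too small for $z$ of size only a fixed power of $C(\pi\times\tilde\pi)$; this is precisely what forces the hypothesis $z\geq e^{O(n^2[F:\Q])}C(\pi\times\tilde\pi)^2$ and requires an effective version of the prime ideal theorem for $L(s,\pi\times\tilde\pi)$ with a zero-free region of the shape proved later in the paper — so in practice I would either cite that zero-free region or, to avoid circularity, use instead the elementary positivity $\sum_{\N\kp\leq z}\lambda_{\pi\times\tilde\pi}(\kp)\log\N\kp/\N\kp\geq \log z - O(\log C(\pi\times\tilde\pi))$ coming directly from $-\frac{L'}{L}(s,\pi\times\tilde\pi)$ having a simple pole at $s=1$ with nonnegative Dirichlet coefficients, which is self-contained.
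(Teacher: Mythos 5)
Your overall strategy matches the paper: Selberg sieve weights supported on squarefree $\kd$ with $\N\kd\leq z$, evaluation of the divisor-weighted sums via \cref{lem:local_density}, and bounding the double error sum with $\N[\kd_1,\kd_2]^{1/4}\leq (\N\kd_1\N\kd_2)^{1/4}$ together with the Weiss bound on ideal counts. The error estimate and the quadratic-form diagonalization are fine.

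The gap is in the main term, and it is not cosmetic. After diagonalizing, the main term is $\kappa\cdot\frac{x}{T}\check\Phi(1/T)\cdot S^{-1}$ with $S=\sum_{\N\kd\leq z}\prod_{\kp\mid\kd}g(\kp)/(1-g(\kp))$, and the residue $\kappa$ of $L(s,\pi\times\tilde\pi)$ at $s=1$ is \emph{not} bounded by an absolute constant (already for $n=1$, $\pi$ trivial, $\kappa$ is a class-number-regulator quantity that can be arbitrarily large). So your plan of establishing $S\geq\frac13\log z$ by itself, and then controlling $\kappa$ by $\kappa\leq 1$ or by conductor factors, does not close: it yields $\frac{3\kappa x}{T\log z}\check\Phi(1/T)$, not the claimed $\frac{3x}{T\log z}\check\Phi(1/T)$. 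Moreover your stated asymptotic $\sum_{\N\kp\leq z}\lambda_{\pi\times\tilde\pi}(\kp)\log\N\kp/\N\kp\sim\log z$ is off by the factor $\kappa$, and the ``elementary positivity'' fallback only produces an \emph{upper} bound on the truncated sum from $-\frac{L'}{L}$, not the lower bound you need. The paper's resolution is precisely the cancellation you are missing: it applies \cref{lem:local_density} with $\kd=\cO_F$, $T=1$, dyadically over $[\sqrt z,z]$, to prove $S\geq 1+\frac{\kappa}{3}\log z+O(z^{-1/(4n^2[F:\Q])}C(\pi\times\tilde\pi)^{1/(2n^2[F:\Q])})$, so that the main term is $\leq\frac{3\kappa}{1+\kappa\log z}\cdot\frac{x}{T}\check\Phi(1/T)\leq\frac{3x}{T\log z}\check\Phi(1/T)$ \emph{uniformly in $\kappa$}. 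This use of \cref{lem:local_density} also sidesteps any appeal to a prime ideal theorem or zero-free region (your circularity worry), since it is a purely analytic Mellin-inversion estimate. You should redo the denominator bound via \cref{lem:local_density} so that $\kappa$ appears on both sides and cancels.
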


\begin{proof}
By proceeding as in the formulation of the Selberg sieve in \cite[Theorem 7.1]{FI} (see also \cite[Lemma 3.6]{Weiss} for a treatment with field uniformity), we find using \cref{lem:local_density} that
\begin{align*}
\sum_{\substack{\kn \\ \kp|\kn\implies \N\kp>z}}\lambda_{\pi\times\tilde{\pi}}(\kn)\Phi\Big(T\log\frac{\N\kn}{x}\Big)&\leq \kappa \frac{x}{T}{\check\Phi}(1/T)\Big(\sum_{\substack{\kd|\prod_{\N\kp\leq z}\kp \\ \N\kd<z}}\prod_{\kp|\kd}\frac{g(\kp)}{1-g(\kp)}\Big)^{-1}\\
&+O\Big(x^{1-\frac{1}{2n^2[F:\Q]}}T^{\frac{3}{8}}C(\pi\times\tilde{\pi})^{\frac{1}{2n^2[F:\Q]}}\sum_{\N\kd_1,\N\kd_2\leq z}\N(\mathrm{lcm}(\kd_1,\kd_2))^{\frac{1}{4}}\Big).
\end{align*}
We use the bound
\[
\sum_{\N\kn\leq z}1\leq (2/\epsilon)^{[F:\Q]}z^{1+\epsilon},\qquad z>0,\qquad 0<\epsilon<1
\]
in \cite[Lemma 1.12a]{Weiss} to bound the sum over $\kd_1$ and $\kd_2$ by $O(e^{O([F:\Q])}z^{5})$.  By the definitions of $g(\kd)$ and $L(s,\pi_{\kp}\times\tilde{\pi}_{\kp})$, we have the lower bound
\begin{multline*}
	\sum_{\substack{\kd|\prod_{\N\kp\leq z}\kp \\ \N\kd<z}}\prod_{\kp|\kd}\frac{g(\kp)}{1-g(\kp)}\geq \sum_{\substack{\N\kn\leq z \\ \textup{$\kn$ squarefree}}}\prod_{\kp|\kn}\sum_{j=1}^{\infty}\frac{\lambda_{\pi\times\tilde{\pi}}(\kp^j)}{\N\kp^j}\geq \sum_{\N\kn\leq z}\frac{\lambda_{\pi\times\tilde{\pi}}(\kn)}{\N\kn}\geq 1+\sum_{\sqrt{z}<\N\kn\leq z}\frac{\lambda_{\pi\times\tilde{\pi}}(\kn)}{\N\kn}.
\end{multline*}

Let $0<\epsilon_0<\frac{1}{10}$, and let $\Phi_1$ be a fixed nonnegative smooth function supported on $[0,1]$, with $\Phi_1(t)=1$ for $\epsilon_0\leq t\leq 1-\epsilon_0$ and $\Phi_1(t)\leq 1$ for $0\leq t\leq 1$.  By taking $\kd=\cO_F$ and $T=1$ in \cref{lem:local_density}, we find that if $y\geq 1$, then
\begin{multline*}
\sum_{y\leq \N\kn\leq ey}\frac{\lambda_{\pi\times\tilde{\pi}}(\kn)}{\N\kn}\geq\frac{1}{ey}\sum_{\kn}\lambda_{\pi\times\tilde{\pi}}(\kn)\Phi_1\Big(\log\frac{\N\kn}{y}\Big)\\
=\frac{e-1+O(\epsilon_0)}{e}\kappa+O(y^{1-\frac{1}{2n^2[F:\Q]}}C(\pi\times\tilde{\pi})^{\frac{1}{2n^2[F:\Q]}}).
\end{multline*}
We dyadically divide $[\sqrt{z},z]$ into subintervals of the form $[y,ey]$ and conclude that
\[
1+\sum_{\sqrt{z}<\N\kn\leq z}\frac{\lambda_{\pi\times\tilde{\pi}}(\kn)}{\N\kn}\geq 1+\frac{\kappa}{3}\log z+O(z^{-\frac{1}{4n^2[F:\Q]}}C(\pi\times\tilde{\pi})^{\frac{1}{2n^2[F:\Q]}})
\]
once $\epsilon_0$ is suitably small.  If $z\geq e^{O(n^2[F:\Q])}C(\pi\times\tilde{\pi})^{2}$, then since $\kappa>0$, it follows that
\[
\kappa\frac{x}{T}{\check\Phi}(1/T)\Big(\sum_{\substack{\kd|\prod_{\N\kp\leq z}\kp \\ \N\kd<z}}\prod_{\kp|\kd}\frac{g(\kp)}{1-g(\kp)}\Big)^{-1}\leq \frac{x}{T}{\check\Phi}(1/T)\frac{3\kappa}{1+\kappa\log z}\leq \frac{3x}{T\log z}{\check\Phi}(1/T).
\]
The result follows once we account for our lower bound for $C(\pi\times\tilde{\pi})$.
\end{proof}

\begin{proof}[Proof of \cref{prop:BT}]

We first fix $\Phi$, requiring that $0\leq\Phi(y)\leq 1$ for all $y$ and that $\Phi(y)=1$ for $y\in[0,1]$.  By \eqref{eqn:check_def}, we see that $|\Phi(1/T)|\ll_{\Phi}1$.  We choose
\[
x\geq e^{O(n^4[F:\Q]^2)}C(\pi\times\tilde{\pi})^{32n^2[F:\Q]},\qquad 1\leq T\leq x^{\frac{1}{16n^2[F:\Q]}}=z.
\]
The sum in \cref{lem:pre-BT} includes all prime powers $\kp^k$ with $\N\kp^k\in(x,xe^{1/T}]$ with $\N\kp>x^{1/(16n^2[F:\Q])}$, hence
\[
\sum_{\substack{x<\N\kp^k\leq xe^{1/T} \\ k\leq 16n^2[F:\Q]}}\lambda_{\pi\times\tilde{\pi}}(\kp^k)\ll n^2[F:\Q]\frac{x}{T\log x}.
\]
For a given $\kp$, we compare coefficients in the formal identity
\[
\exp\Big(\sum_{k=1}^{\infty}\frac{\Lambda_{\pi\times\tilde{\pi}}(\kp^k)}{k\log\N\kp}X^k\Big)=1+\sum_{k=1}^{\infty}\lambda_{\pi\times\tilde{\pi}}(\kp^k)X^k
\]
and use the nonnegativity of $\Lambda_{\pi\times\tilde{\pi}}(\kp^k)$ and $\lambda_{\pi\times\tilde{\pi}}(\kp^k)$ to deduce the bound
\[
\lambda_{\pi\times\tilde{\pi}}(\kp^k)\geq\frac{\Lambda_{\pi\times\tilde{\pi}}(\kp^k)}{k\log\N\kp},
\]
hence
\[
\sum_{\substack{x<\N\kp^k\leq xe^{1/T} \\ k\leq 16n^2[F:\Q]}}\Lambda_{\pi\times\tilde{\pi}}(\kp^k)\ll n^2[F:\Q]\frac{x}{T}.
\]

To handle the contribution when $k>16n^2[F:\Q]$, we appeal to the bound $|\Lambda_{\pi\times\tilde{\pi}}(\kp^k)|\leq n^2\N\kp^{(1-\frac{2}{n^2+1})k}\log\N\kp$, which follows from \eqref{eqn:LRS_finite} and \eqref{eqn:LRS_2}.  This, along with the trivial estimate 
\[
\sum_{\N\kp\leq x}1\ll [F:\Q]\frac{x}{\log x}
\]
for $x\geq 3$, implies that
\begin{align*}
\sum_{\substack{x<\N\kp^k\leq xe^{1/T} \\ k>16n^2[F:\Q]}}\Lambda_{\pi\times\tilde{\pi}}(\kp^k)&\leq n^2 x^{1-\frac{2}{n^2+1}}\sum_{\substack{\N\kp^k\leq ex \\ k>16n^2[F:\Q]}}\log\N\kp\\
&\ll n^2[F:\Q] x^{1-\frac{2}{n^2+1}+\frac{1}{16n^2[F:\Q]}}\log x\ll n^2[F:\Q]\frac{x}{T}.
\end{align*}
Since $\Lambda_{\pi\times\tilde{\pi}}(\kn)=0$ whenever $\kn$ is not a power of a prime ideal, this concludes our proof.
\end{proof}

\section{Zeroes of \texorpdfstring{$L(s,\pi\times\tilde{\pi})$}{L(s,\83\300{}\80\327{}\83\003{}\83\300{})}}

We require three results on the distribution of zeroes of $L(s,\pi\times\tilde{\pi})$, which are analogous to the key ingredients in Linnik's bound on the least prime in an arithmetic progression: a standard zero-free region with an effective bound on a possible Landau--Siegel zero, a log-free zero density estimate, and a quantification of the Deuring--Heilbronn zero repulsion phenomenon for Landau--Siegel zeroes.

\subsection{A standard zero-free region for \texorpdfstring{$L(s,\pi\times\tilde{\pi})$}{L(s,\83\300{}\80\327{}\83\003{}\83\300{})}}

Let $\pi\in\mathfrak{F}_n$.  In \cite{Humphries}, Humphries proved that if $|\alpha_{j,\pi}(\kp)|\leq 1$ (uniformly in $j$) for all except a density zero subset of prime ideals $\kp$, then there exists a constant $c_{\pi}>0$ dependent on $\pi$ (and hence also on $n$) such that $L(s,\pi\times\tilde{\pi})\neq 0$ in the region
\[
\re(s)\geq 1-\frac{c_{\pi}}{\log(|\im(s)|+e)},\qquad \im(s)\neq 0.
\]
This extended upon work of Goldfeld and Li \cite{Goldfeld_Li}, who proved a weaker zero-free region via a different method under the additional conditions that $F = \Q$ and that $\pi$ is everywhere unramified. Here, we prove an unconditional refinement with improved uniformity in $\pi$.
\begin{theorem}
\label{thm:ZFR}
Let $\pi\in\mathfrak{F}_n$. There exists an absolute constant $\Cl[abcon]{ZFR}>0$ such that the Rankin--Selberg $L$-function $L(s,\pi\times\tilde{\pi})$ is nonvanishing in the region
\[
\re(s)\geq 1-\frac{\Cr{ZFR}}{\log(C(\pi\times\tilde{\pi})(|\im(s)|+e)^{n^2[F:\Q]})}
\]
apart from at most one exceptional zero $\beta_1$.  If $\beta_1$ exists, then it is both real and simple.
\end{theorem}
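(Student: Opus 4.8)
\textbf{Proof proposal for \cref{thm:ZFR}.} The plan is to carry out the classical de la Vall\'ee Poussin argument. What makes it available for $L(s,\pi\times\tilde{\pi})$---without the auxiliary Rankin--Selberg $L$-functions the usual proof requires---are two structural features that persist even when $\pi$ is not self-dual: $L(s,\pi\times\tilde{\pi})$ has a simple pole at $s=1$, visible in the factor $s(s-1)$ of $\Lambda(s,\pi\times\tilde{\pi})$; and the coefficients $\Lambda_{\pi\times\tilde{\pi}}(\kn)$ of $-\frac{L'}{L}(s,\pi\times\tilde{\pi})$ are nonnegative while the Dirichlet coefficients $\lambda_{\pi\times\tilde{\pi}}(\kn)$ are real (as $\widetilde{\pi\times\tilde{\pi}}=\pi\times\tilde{\pi}$). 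The second feature lets the single function $L(s,\pi\times\tilde{\pi})$ simultaneously detect the pole and a hypothetical zero through $3+4\cos\theta+\cos2\theta=2(1+\cos\theta)^{2}\geq0$; the first supplies the pole. In particular the degree-$n^{4}$ object $L(s,(\pi\times\tilde{\pi})\times(\pi\times\tilde{\pi}))$, not known to be automorphic once $n\geq3$, is never needed.

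Set $Q(t):=C(\pi\times\tilde{\pi})(|t|+e)^{n^{2}[F:\Q]}$. Using the order-$1$ Hadamard factorization of $\Lambda(s,\pi\times\tilde{\pi})$, the self-dual functional equation, Stirling's formula for the archimedean factors (whose parameters satisfy $\re(\mu_{j,j',\pi\times\tilde{\pi}}(v))\geq-2\theta_{n}$ by \eqref{eqn:LRS_2}), and the conductor bound \eqref{eqn:BH}, I would first prove the partial fraction estimate
\[
-\re\frac{L'}{L}(s,\pi\times\tilde{\pi})=\re\frac{1}{s-1}-\sum_{\rho}\re\frac{1}{s-\rho}+O\big(\log Q(\im(s))\big),\qquad 1<\re(s)\leq\tfrac{5}{4},
\]
where $\rho=\beta+i\gamma$ runs over nontrivial zeros and $\re\frac{1}{s-\rho}\geq0$ for each, since $\re(s)>1\geq\beta$. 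Applying this at $s=\sigma,\,\sigma+i\gamma_{0},\,\sigma+2i\gamma_{0}$ with weights $3,4,1$ against the nonnegative series $\sum_{\kn}\Lambda_{\pi\times\tilde{\pi}}(\kn)\big(3+4\cos(\gamma_{0}\log\N\kn)+\cos(2\gamma_{0}\log\N\kn)\big)\N\kn^{-\sigma}\geq0$, and---the key refinement over the single-zero argument---retaining in the $s=\sigma$ and $s=\sigma+i\gamma_{0}$ terms a putative zero $\rho_{0}=\beta_{0}+i\gamma_{0}$ together with its conjugate $\overline{\rho_{0}}$ (which is a zero, by realness of the coefficients, and is counted with its multiplicity), I would reach, for $1<\sigma\leq\tfrac{5}{4}$,
\[
\frac{4}{\sigma-\beta_{0}}+\frac{6(\sigma-\beta_{0})}{(\sigma-\beta_{0})^{2}+\gamma_{0}^{2}}+\frac{4(\sigma-\beta_{0})}{(\sigma-\beta_{0})^{2}+4\gamma_{0}^{2}}\leq\frac{3}{\sigma-1}+\frac{4(\sigma-1)}{(\sigma-1)^{2}+\gamma_{0}^{2}}+\frac{\sigma-1}{(\sigma-1)^{2}+4\gamma_{0}^{2}}+O\big(\log Q(\gamma_{0})\big).
\]

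The point of retaining the conjugate is that the ``zero'' terms on the left then carry the coefficients $6$ and $4$ where the matching ``pole'' terms on the right carry only $4$ and $1$; choosing $\sigma=1+\delta/\log Q(\gamma_{0})$ with $\delta$ a sufficiently small absolute constant, so that $\sigma-\beta_{0}$ is within a factor $1+o(1)$ of $\sigma-1$ whenever $\beta_{0}$ is close to $1$, the resulting surplus on the left exceeds the $O(\log Q(\gamma_{0}))$ on the right and forces $\beta_{0}\leq1-\Cr{ZFR}/\log Q(\gamma_{0})$ for a suitable absolute $\Cr{ZFR}>0$ --- unless $\overline{\rho_{0}}=\rho_{0}$ is genuinely the same simple zero, i.e.\ unless $\rho_{0}$ is a simple real zero. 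Hence the only zeros of $L(s,\pi\times\tilde{\pi})$ in the region $\re(s)\geq1-\Cr{ZFR}/\log(C(\pi\times\tilde{\pi})(|\im(s)|+e)^{n^{2}[F:\Q]})$ are simple and real; running the inequality once more at $\gamma_{0}=0$ with two distinct such zeros retained shows their contributions $\frac{1}{\sigma-\beta_{0}}+\frac{1}{\sigma-\beta_{0}'}$ would exceed $\frac{1}{\sigma-1}+O(\log Q(0))$ for $\sigma=1+\delta/\log Q(0)$, a contradiction, so at most one survives. This is \cref{thm:ZFR}.

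The main obstacle is the partial fraction estimate above \emph{with the stated uniformity in $C(\pi\times\tilde{\pi})$}; the de la Vall\'ee Poussin bookkeeping afterward is elementary. It requires the local zero count $\sum_{|\gamma-t|\leq1}1\ll\log Q(t)$---which follows from the order-$1$ factorization together with an upper bound for $|L(s,\pi\times\tilde{\pi})|$ just to the right of $\re(s)=1$, namely the conductor-uniform convexity bound \cref{lem:convexity}---and control of the real constant $B$ in the Hadamard product via $\re B=-\sum_{\rho}\re(1/\rho)$, which is legitimate because $\Lambda(s,\pi\times\tilde{\pi})$ is self-dual and hence real on $\R$. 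This is precisely the step where earlier treatments stalled: the de la Vall\'ee Poussin positivity has always been available for $\pi\times\tilde{\pi}$, but a clean unconditional upper bound for $L(s,\pi\times\tilde{\pi})$ near the edge of the critical strip was missing from the literature, and supplying it via \cref{lem:convexity}, together with the self-duality of $\pi\times\tilde{\pi}$, is what renders the zero-free region unconditional.
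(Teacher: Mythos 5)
Your proposal is correct and runs along essentially the same lines as the paper's proof. Both arguments hinge on precisely the two structural features you isolate: the nonnegativity of $\Lambda_{\pi\times\tilde{\pi}}(\kn)$ and the self-duality of $\pi\times\tilde{\pi}$ (so that $\overline{\rho_0}$ is a zero whenever $\rho_0$ is). This is what lets the de la Vall\'ee Poussin argument close without invoking the Rankin--Selberg $L$-function of $\pi\times\tilde{\pi}$ with itself. The differences from the paper are cosmetic. The paper packages the weighting through the isobaric representation $\Pi=\pi\boxplus\pi\otimes\lvert\det\rvert^{i\gamma}\boxplus\pi\otimes\lvert\det\rvert^{-i\gamma}$; taking real parts of $L'/L(s,\Pi\times\tilde{\Pi})$ at $s=\sigma$ amounts to the three-point evaluation with trigonometric polynomial $3+4\cos\theta+2\cos2\theta=(1+2\cos\theta)^2$, i.e.\ weights $(3,4,2)$, whereas you use the classical $3+4\cos\theta+\cos2\theta=2(1+\cos\theta)^2$ with weights $(3,4,1)$. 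Either polynomial works. The paper then keeps only the multiplicity-$4$ contribution $\frac{4}{\sigma-\beta}$ and must treat $|\gamma|\lesssim 1/\log C(\pi\times\tilde{\pi})$ separately with a one-point argument; you keep the further nonnegative terms coming from $\overline{\rho_0}$ evaluated at $\sigma+i\gamma_0$ and $\sigma+2i\gamma_0$, and this added positivity (the surplus $1+\tfrac{2}{1+u^2}+\tfrac{3}{1+4u^2}\geq 1$ at $\beta_0=1$, uniformly in $u=\gamma_0/(\sigma-1)$) makes the same inequality give a contradiction uniformly in $0<|\gamma_0|\leq 1$, eliminating the case split. That is a small bookkeeping simplification, not a different route. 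The real-zero and simplicity arguments are also the same in both treatments.

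One factual correction to your closing paragraph. The local zero count $\#\{\rho:\lvert\gamma-t\rvert\leq 1\}\ll\log Q(t)$ does not require \cref{lem:convexity}: it follows from the order-$1$ Hadamard factorization, Stirling for the archimedean factor (uniform because of \eqref{eqn:LRS_2} and \eqref{eqn:BH}), and the fact that $-\re\frac{L'}{L}(2+it,\pi\times\tilde{\pi})\leq -\frac{L'}{L}(2,\pi\times\tilde{\pi})\ll n^{2}[F:\Q]$ by nonnegativity of $\Lambda_{\pi\times\tilde{\pi}}$ and the Luo--Rudnick--Sarnak bounds. The convexity estimate \cref{lem:convexity} plays no role in \cref{thm:ZFR}; in the paper it feeds the Brun--Titchmarsh bound and, through that, the log-free zero density estimate. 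Likewise, the historical obstruction was not a missing upper bound for $L(s,\pi\times\tilde{\pi})$ near $\re(s)=1$ but rather that the standard zero-free-region machinery (as in \cite[Theorem 5.10]{IK}) requires the automorphy of $L(s,\Pi\times\tilde{\Pi})$ for $\Pi=\pi\times\tilde{\pi}$, which is unknown for $n\geq 3$; the self-duality observation you correctly employ is exactly what renders that input unnecessary.
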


\begin{proof}
Let $\rho = \beta + i\gamma$ be a zero of $L(s,\pi\times \tilde{\pi})$ with $\beta \geq 1/2$ and $\gamma \neq 0$. We define
\[
\Pi := \pi \boxplus \pi \otimes \left|\det\right|^{i\gamma} \boxplus \pi \otimes \left|\det\right|^{-i\gamma}.
\]
This is an isobaric (noncuspidal) representation of $\GL_{3n}(\A_F)$. The Rankin--Selberg $L$-function $L(s,\Pi\times \tilde{\Pi})$ factorises as
\[
L(s,\pi\times \tilde{\pi})^3 L(s + i\gamma, \pi\times \tilde{\pi})^2 L(s - i\gamma, \pi\times \tilde{\pi})^2 L(s + 2i\gamma, \pi\times \tilde{\pi}) L(s - 2i\gamma, \pi\times \tilde{\pi}).
\]
Since $L(s,\pi\times \tilde{\pi})$ is meromorphic on $\mathbb{C}$ with only a simple pole at $s = 1$, $L(s,\Pi\times \tilde{\Pi})$ is a meromorphic function on $\mathbb{C}$ with a triple pole at $s = 1$, double poles at $s = 1 \pm i\gamma$, and simple poles at $s = 1 \pm 2i\gamma$. Moreover, the functional equation for $L(s,\pi\times \tilde{\pi})$ together with the fact that $\pi\times \tilde{\pi}$ is self-dual (even if $\pi$ itself is not self-dual) implies that if $\rho$ is a zero of $L(s,\pi\times \tilde{\pi})$, then so is $\overline{\rho}$.  Consequently, $s = \beta$ is a zero of $L(s,\Pi\times \tilde{\Pi})$ of order at least $4$.


Define
\[
\Lambda(s,\Pi\times\tilde{\Pi}):=\Lambda(s,\pi\times\tilde{\pi})^3\Lambda(s+i\gamma,\pi\times\tilde{\pi})^2\Lambda(s-i\gamma,\pi\times\tilde{\pi})^2\Lambda(s+2i\gamma,\pi\times\tilde{\pi})\Lambda(s-2i\gamma,\pi\times\tilde{\pi}).
\]
Since $\Lambda(s,\Pi\times\tilde{\Pi})$ is entire of order 1 (regardless of whether $\gamma=0$), it admits a Hadamard factorization
\[
e^{a_{\Pi\times\tilde{\Pi}}+b_{\Pi\times\tilde{\Pi}}s}\prod_{\substack{L(\rho,\Pi\times\tilde{\Pi})=0}}\Big(1-\frac{s}{\rho}\Big)e^{s/\rho}.
\]
where $\rho$ runs through the nontrivial zeroes of $L(s,\Pi\times\tilde{\Pi})$.  A standard calculation shows that
\[
\re(b_{\Pi\times\tilde{\Pi}}) = -\sum_{\rho}\re(\rho^{-1})
\]
(see \cite[Proposition 5.7(3)]{IK}).  Therefore, it follows by equating the real parts of the logarithmic derivatives of $\Lambda(s,\Pi\times\tilde{\Pi})$ and its Hadamard product that for $\sigma>1$, we have
\begin{multline*}
\sum_{\rho\neq0,1}\re\Big(\frac{1}{\sigma-\rho}\Big) \\
= \re\Big(\frac{L'}{L}(\sigma,\Pi\times\tilde{\Pi})\Big)+\frac{3}{\sigma-1}+\frac{3}{\sigma}+\frac{1}{\sigma+2i\gamma-1}+\frac{1}{\sigma+2i\gamma}+\frac{1}{\sigma-2i\gamma-1}\\
+\frac{1}{\sigma-2i\gamma}+\frac{2}{\sigma+i\gamma-1}+\frac{2}{\sigma+i\gamma}+\frac{2}{\sigma-i\gamma-1}+\frac{2}{\sigma-i\gamma}\\
+\frac{1}{2}\log(D_F^{9n^2}\N\kq_{\Pi\times\tilde{\Pi}})+\re\Big(\frac{L'}{L}(\sigma,\Pi_{\infty}\times\tilde{\Pi}_{\infty})\Big).
\end{multline*}
Restricting $\rho$ to the real zeroes $\beta>\frac{1}{2}$ of $L(s,\Pi\times\tilde{\Pi})$ and applying Stirling's formula, we find that there exists an absolute and effectively computable constant $\Cl[abcon]{Stirling}\geq 1$ such that
\begin{multline*}
\sum_{\substack{L(\beta,\Pi\times\tilde{\Pi})=0 \\ \beta>\frac{1}{2}}}\frac{1}{\sigma-\beta} \\
\leq \re\Big(\frac{L'}{L}(\sigma,\Pi\times\tilde{\Pi})\Big)+\frac{3}{\sigma-1}+\frac{2}{\sigma+i\gamma-1}+\frac{2}{\sigma-i\gamma-1}+\frac{1}{\sigma+2i\gamma-1}+\frac{1}{\sigma-2i\gamma-1}\\
+\Cr{Stirling}\log C(\Pi\times\tilde{\Pi}).
\end{multline*}
We now observe that if $\sigma>1$, then
\[
\frac{L'}{L}(\sigma,\Pi\times \tilde{\Pi}) = - \sum_{\kn} \frac{\Lambda_{\pi\times \tilde{\pi}}(\kn)}{\N\kn^{\sigma}} (1 + 2\cos (\gamma \log \N\kn))^2 \leq 0.
\]
This crucially relies on the nonnegativity of $\Lambda_{\pi\times\tilde{\pi}}(\kn)$, even if $\gcd(\kn,\kq_{\pi})\neq\cO_F$ \cite[Lemma a]{Hoffstein}. Therefore, it follows that
\[
\sum_{\substack{L(\beta,\Pi\times\tilde{\Pi})=0 \\ \beta>\frac{1}{2}}}\frac{1}{\sigma-\beta} \leq \frac{3}{\sigma-1}+\frac{4(\sigma-1)}{(\sigma-1)^2+\gamma^2}+\frac{2(\sigma-1)}{(\sigma-1)^2+4\gamma^2}+\Cr{Stirling}\log C(\Pi\times\tilde{\Pi}).
\]
By \eqref{eqn:BH}, we have the bound
\begin{align*}
C(\Pi\times \tilde{\Pi}) & = C(\pi\times \tilde{\pi})^3 C(\pi\times \tilde{\pi},\gamma)^2 C(\pi\times \tilde{\pi},-\gamma)^2 C(\pi\times \tilde{\pi},2\gamma) C(\pi\times \tilde{\pi},-2\gamma)	\\
& \leq C(\pi\times \tilde{\pi})^9 (2|\gamma| + e)^{6n^2 [F:\Q]},
\end{align*}
in which case
\begin{equation}
\label{eqn:starting_point}
\begin{aligned}
\sum_{\substack{L(\beta,\Pi\times\tilde{\Pi})=0 \\ \beta>\frac{1}{2}}}\frac{1}{\sigma-\beta} &\leq \frac{3}{\sigma-1}+\frac{4(\sigma-1)}{(\sigma-1)^2+\gamma^2}+\frac{2(\sigma-1)}{(\sigma-1)^2+4\gamma^2}\\
&+9\Cr{Stirling}\log (C(\pi\times\tilde{\pi}) (2|\gamma|+e)^{n^2[F:\Q]}).
\end{aligned}
\end{equation}

Recall that $L(\beta+i\gamma,\pi\times\tilde{\pi})=0$.  By the discussion at the beginning of the proof, it follows that $\beta$ is a zero of $L(s,\Pi\times\tilde{\Pi})$ with multiplicity at least 4.  Therefore, by \eqref{eqn:starting_point}, the inequality
\begin{equation}
\label{eqn::}
\frac{4}{\sigma-\beta}\leq \frac{3}{\sigma-1}+\frac{4(\sigma-1)}{(\sigma-1)^2+\gamma^2}+\frac{2(\sigma-1)}{(\sigma-1)^2+4\gamma^2}+9\Cr{Stirling}\log (C(\pi\times\tilde{\pi}) (2|\gamma|+e)^{n^2[F:\Q]}).
\end{equation}
holds for all $1<\sigma<\frac{3}{2}$.  However, if $|\gamma|$ is less than a sufficiently small positive multiple of $1/\log C(\pi\times\tilde{\pi})$, then for all $1<\sigma<\frac{3}{2}$, \eqref{eqn::} does not imply a nontrivial upper bound for $\beta$.  By choosing
\[
\sigma = 1+\frac{1}{28\Cr{Stirling}\log (C(\pi\times\tilde{\pi})(2|\gamma|+e)^{n^2[F:\Q]})},
\]
we ensure via \eqref{eqn::} that
\[
\beta\leq 1-\frac{1}{3108\Cr{Stirling}\log(C(\pi\times\tilde{\pi})(2|\gamma|+e)^{n^2[F:\Q]})}
\]
whenever
\[
|\gamma|\geq \frac{1}{7\Cr{Stirling}\log C(\pi\times\tilde{\pi})}.
\]

To handle the case where $0<|\gamma|<1/(7\Cr{Stirling}\log C(\pi\times\tilde{\pi}))$, we proceed as above, but with $L(s,\pi\times\tilde{\pi})$ in place of $L(s,\Pi\times\tilde{\Pi})$.  As described above, if $L(\beta+i\gamma,\pi\times\tilde{\pi})=0$, then $L(\beta-i\gamma,\pi\times\tilde{\pi})=0$.  By analysis that is essentially identical to before, we conclude that if $1<\sigma<\frac{3}{2}$, $L(\beta+i\gamma,\pi\times\tilde{\pi})=0$, $\beta>\frac{1}{2}$, and $0<|\gamma|<1/(\Cr{Stirling}\log C(\pi\times\tilde{\pi}))$, then
\begin{equation}
\label{eqn:low_zeroes}
\sum_{\substack{L(\beta+i\gamma,\pi\times\tilde{\pi})=0 \\ \textup{$\beta+i\gamma$ nontrivial}}}\frac{\sigma-\beta}{(\sigma-\beta)^2+\gamma^2}\leq  \frac{1}{\sigma-1}+\Cr{Stirling}\log C(\pi\times\tilde{\pi}).
\end{equation}
If $\gamma\neq 0$ and $\beta+i\gamma$ is in the sum over zeroes in \eqref{eqn:low_zeroes}, then so is $\beta-i\gamma$.  Therefore, we have
\[
2\frac{\sigma-\beta}{(\sigma-\beta)^2+\gamma^2}\leq  \frac{1}{\sigma-1}+\Cr{Stirling}\log C(\pi\times\tilde{\pi}).
\]
By choosing
\begin{equation}
\label{eqn:sigma_choice}
\sigma = 1+\frac{1}{2\Cr{Stirling}\log C(\pi\times\tilde{\pi})},
\end{equation}
we ensure that
\[
\beta\leq 1-\frac{2\sqrt{1-9\Cr{Stirling}^2\gamma^2(\log C(\pi\times\tilde{\pi}))^2}-1}{6\Cr{Stirling}\log C(\pi\times\tilde{\pi})}.
\]
Our hypothesis that $0<|\gamma|<1/(7\Cr{Stirling}\log C(\pi\times\tilde{\pi}))$ implies that
\begin{equation}
\label{eqn:lowzerobound}
\beta\leq 1-\frac{4\sqrt{10}-7}{42\Cr{Stirling}\log C(\pi\times\tilde{\pi})}\leq 1-\frac{1}{3108\Cr{Stirling}\log(C(\pi\times\tilde{\pi})(|\gamma|+e)^{n^2[F:\Q]})}.
\end{equation}

Finally, we address the real zeroes of $L(s,\pi\times\tilde{\pi})$.  By \eqref{eqn:low_zeroes}, if $1<\sigma<\frac{3}{2}$, then
\begin{equation}
\label{eqn:realzerosum}
\sum_{\substack{L(\beta,\pi\times\tilde{\pi})=0 \\ 1-\frac{1}{3108\Cr{Stirling}\log(C(\pi\times\tilde{\pi})3^{n^2[F:\Q]})}\leq \beta\leq 1}}\frac{1}{\sigma-\beta}\leq \frac{1}{\sigma-1}+\Cr{Stirling}\log C(\pi\times\tilde{\pi}),
\end{equation}
where the zeroes $\beta$ are counted with multiplicity.  Therefore, if $N$ is the total number of zeroes (with multiplicity) in \eqref{eqn:realzerosum}, then
\[
\frac{N}{\sigma-1}\leq \frac{1}{\sigma-1}+\Cr{Stirling}\log C(\pi\times\tilde{\pi}).
\]
With $\sigma$ as in \eqref{eqn:sigma_choice}, it follows that
\[
N\leq \frac{3}{2}.
\]
Since $N$ is an integer, we must have $N\in\{0,1\}$.  We have counted zeroes with multiplicity, so if $N=1$, then the enumerated zero must be simple.
\end{proof}

\subsection{Log-free zero density estimate}
Our log-free zero density estimate is a version of the work of Soundararajan and Thorner \cite[Theorem 1.2]{ST}  in which we explicate the dependence of the implied constant on $n$ and $[F:\Q]$.  Because the details follow those in the proof of \cite[Theorem 1.2]{ST} so closely, we only provide a sketch that outlines our minor adjustments.  These adjustments use \cref{prop:BT,thm:ZFR}.
\begin{proposition}
	\label{prop:LFZDE}
	Let $\pi\in\mathfrak{F}_n$, $T\geq 1$, and $0\leq\sigma\leq 1$.  Define
	\[
	N(\sigma,T):=\#\{\rho=\beta+i\gamma\neq\beta_1\colon \beta\geq\sigma,~|\gamma|\leq T,~L(\rho,\pi\times\tilde{\pi})=0\}.
	\]
	If $\log\log C(\pi\times\tilde{\pi})\gg n^4[F:\Q]^2$ with a sufficiently large implied constant, then
	\[
	N(\sigma,T)\ll n^2[F:\Q](C(\pi\times\tilde{\pi})T^{[F:\Q]})^{10^7 n^2(1-\sigma)}.
	\]
\end{proposition}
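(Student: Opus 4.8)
The plan is to follow the proof of \cite[Theorem 1.2]{ST} essentially line by line, replacing every implicit dependence on $n$ and the degree by an explicit one; the real content is then bookkeeping fed by \cref{prop:BT,thm:ZFR}. There are three ingredients. First, a \emph{zero-detection} step: one shows that every zero $\rho=\beta+i\gamma\neq\beta_1$ of $L(s,\pi\times\tilde{\pi})$ with $\beta\geq\sigma$ and $|\gamma|\leq T$ forces a short Dirichlet polynomial built from the coefficients of $\log L(s,\pi\times\tilde{\pi})$ (twisted by a mollifier) to be large at $s=\rho$. Second, a \emph{large-values} estimate: the number of well-spaced points at which such a Dirichlet polynomial is large is bounded by a mean value of the polynomial over the zeros, which a large sieve inequality controls once the $\ell^2$-norm of its coefficients is known. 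Third, the zero-free region \cref{thm:ZFR}, which disposes of the range where $1-\sigma$ is smaller than an absolute constant times $1/\log(C(\pi\times\tilde{\pi})(T+e)^{n^2[F:\Q]})$, i.e.\ the regime where a density argument has no room to operate.

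For the first step I would fix a length $X=C(\pi\times\tilde{\pi})^{c_1 n^2[F:\Q]}$ with $c_1$ large enough that $X$ and its dyadic subdivisions lie in the admissible range of \cref{prop:BT}, and form the mollifier $M(s)=\sum_{\N\m\leq X}\mu_{\pi\times\tilde{\pi}}(\m)\N\m^{-s}$, a partial sum of $1/L(s,\pi\times\tilde{\pi})$. Since $L(\rho)=0$ we have $0=L(\rho)M(\rho)$, and shifting contours against a smooth weight $\Phi$ as in \eqref{eqn:check_def} splits this into a ``main'' term equal to $1+o(1)$ plus Dirichlet polynomials supported on $\N\m>X$; subdividing the tail smoothly and dyadically up to length $X^{O(1)}$, one concludes that $|D_j(\rho)|\gg (\log X)^{-1}$ for at least one of $O(n^2[F:\Q]\log(en[F:\Q]))$ Dirichlet polynomials $D_j(s)=\sum_{Y<\N\kn\leq eY}a_j(\kn)\N\kn^{-s}$, where $a_j(\kn)$ is essentially $\Lambda_{\pi\times\tilde{\pi}}(\kn)/(\N\kn\log\N\kn)$ convolved with $\mu_{\pi\times\tilde{\pi}}$. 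Here $\log X\asymp n^2[F:\Q]\log C(\pi\times\tilde{\pi})$, which is exactly what turns the eventual bound into a clean power of $C(\pi\times\tilde{\pi})$; the comparison of $C(\pi\times\tilde{\pi})$ with $C(\pi)$ via \eqref{eqn:BH} is only cosmetic.

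The second step is where \cref{prop:BT} does its work. The crucial input is the $\ell^2$-bound
\[
\sum_{Y<\N\kn\leq eY}\frac{|a_j(\kn)|^2}{\N\kn}\ll n^2[F:\Q],
\]
which follows from the nonnegativity of $\Lambda_{\pi\times\tilde{\pi}}$ \cite[Lemma a]{Hoffstein}, the Brun--Titchmarsh bound of \cref{prop:BT}, and the pointwise bounds \eqref{eqn:LRS_finite} and \eqref{eqn:LRS_2} used to dominate the ramified primes, the prime powers, and the mollifier coefficients (no appeal to GRC is needed). One then thins the zeros: by the standard count $\#\{\rho:|\gamma|\leq T\}\ll n^2[F:\Q]\,T\log(C(\pi\times\tilde{\pi})(T+e))$ (cf.\ \cite[Proposition 5.7]{IK} applied to $L(s,\pi\times\tilde{\pi})$, of degree $n^2[F:\Q]$), the zeros fall into $O(\log(C(\pi\times\tilde{\pi})T))$ families with imaginary parts $\gg 1$-separated, and for each family a Halász--Montgomery-type inequality bounds the number of large values of $D_j$ by its mean square over the zeros, which in turn a large sieve inequality for Dirichlet polynomials over well-spaced $\gamma$ bounds by $(Y+T)\sum_{Y<\N\kn\leq eY}|a_j(\kn)|^2/\N\kn$. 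Inserting $|D_j(\rho)|\gg(\log X)^{-1}$, summing over the $O(n^2[F:\Q]\log(en[F:\Q]))$ polynomials and the $O(\log(C(\pi\times\tilde{\pi})T))$ families, and using $Y\leq X^{O(1)}=C(\pi\times\tilde{\pi})^{O(n^2[F:\Q])}$, one arrives at $N(\sigma,T)\ll n^2[F:\Q](C(\pi\times\tilde{\pi})T^{[F:\Q]})^{An^2(1-\sigma)}$; the factor $T^{[F:\Q]}$ is the only $T$-dependence, entering through the polynomial length times the spacing loss, and tracking every constant gives $A\leq 10^7$.

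Finally, once $1-\sigma$ drops below a small absolute constant divided by $\log(C(\pi\times\tilde{\pi})(T+e)^{n^2[F:\Q]})$, \cref{thm:ZFR} shows the only possible zero of $L(s,\pi\times\tilde{\pi})$ in that half-plane is $\beta_1$, which is excluded from $N(\sigma,T)$ by definition; hence $N(\sigma,T)=0$ there and the claimed bound is trivial, so the density argument only needs to run for $\sigma$ outside this strip, which is precisely where it is effective. The main obstacle is not any single estimate but the uniformity: every Dirichlet polynomial length, every admissible range for $T$, and every $\ell^2$-bound must be carried through with the correct powers of $n$ and $[F:\Q]$, and it is the constraint $1\leq T\leq x^{1/(16n^2[F:\Q])}$ in \cref{prop:BT} — which forces the polynomial length to be a fixed power $C(\pi\times\tilde{\pi})^{O(n^2[F:\Q])}$ of the conductor — that pins down both the exponent $10^7 n^2$ and the degree-sized prefactor $n^2[F:\Q]$.
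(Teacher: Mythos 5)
Your proposal is a clean description of the Montgomery--Gallagher mollifier approach to log-free zero density estimates, but this is not the route taken in \cite{ST} (and hence in the paper), and more seriously it runs into a genuine obstruction that \cite{ST} is specifically designed to circumvent.

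The gap is in the claimed $\ell^{2}$-bound
\[
\sum_{Y<\N\kn\leq eY}\frac{|a_{j}(\kn)|^{2}}{\N\kn}\ll n^{2}[F:\Q],
\]
where $a_{j}$ is ``essentially $\Lambda_{\pi\times\tilde{\pi}}$ convolved with $\mu_{\pi\times\tilde{\pi}}$.'' The issue is $\mu_{\pi\times\tilde{\pi}}$. Already at primes, $\mu_{\pi\times\tilde{\pi}}(\kp)=-\lambda_{\pi\times\tilde{\pi}}(\kp)$, and the only unconditional pointwise control is $|\lambda_{\pi\times\tilde{\pi}}(\kp)|\leq n^{2}\N\kp^{2\theta_{n}}$ from \eqref{eqn:LRS_2}, where $2\theta_{n}=1-\tfrac{2}{n^{2}+1}$ is strictly larger than $\tfrac{1}{2}$ for $n\geq 2$. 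Even combining this $\ell^{\infty}$-bound with the $\ell^{1}$-bound from \cref{prop:BT} (i.e.\ $\sum_{\kp\sim Y}\lambda_{\pi\times\tilde{\pi}}(\kp)\ll n^{2}[F:\Q]Y/\log Y$), one gets
\[
\sum_{\kp\sim Y}\frac{|\mu_{\pi\times\tilde{\pi}}(\kp)|^{2}}{\N\kp}\;\leq\;\Big(\max_{\kp\sim Y}\frac{|\lambda_{\pi\times\tilde{\pi}}(\kp)|}{\N\kp}\Big)\sum_{\kp\sim Y}\lambda_{\pi\times\tilde{\pi}}(\kp)\;\ll\;n^{4}[F:\Q]\,\frac{Y^{4\theta_{n}-1}}{\log Y}\cdot Y\cdot\frac{1}{Y},
\]
which grows like a positive power of $Y$ when $n\geq 2$. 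Bounding this properly would require a second-moment control of $\lambda_{\pi\times\tilde{\pi}}(\kp)$, i.e.\ a fourth moment of $\lambda_{\pi}(\kp)$, which is exactly the information carried by $L(s,(\pi\times\tilde{\pi})\times(\pi\times\tilde{\pi}))$ — the very Rankin--Selberg $L$-function whose unavailability for $n\geq 3$ is the central obstruction the paper is working around (see the discussion preceding \cref{thm:ZFR}). So the parenthetical ``no appeal to GRC is needed'' is not correct here; the mollifier coefficients cannot be dominated unconditionally.

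The approach in \cite[Section 4]{ST}, which the paper follows, sidesteps this by dispensing with the mollifier entirely. The detector there is a short Dirichlet polynomial supported on primes with \emph{nonnegative} coefficients built from $\Lambda_{\pi\times\tilde{\pi}}$, raised to a power $K$ that is tuned to the local zero count near $\rho$ (this is the parameter $K$ appearing in \cite[Lemma 4.2]{ST} and in the paper's sketch, with no analogue in your proposal). Because every coefficient that enters is nonnegative, the only moment estimates needed are $\ell^{1}$-type and are supplied exactly by the Brun--Titchmarsh bound \cref{prop:BT} (replacing \cite[Theorem 2.4]{ST}); no $\ell^{2}$-bound on $\mu_{\pi\times\tilde{\pi}}$ is ever required. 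Your step (3) — disposing of the near-$\sigma=1$ range via \cref{thm:ZFR} — matches the paper, but the detection and large-values steps need to be replaced by the $K$-th-power construction to make the argument unconditional.
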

\begin{proof}[Sketch of proof]
The following discussion assumes familiarity with the proofs in \cite{ST}.  The analogue of \cite[Lemma 2.3]{ST} over number fields is
	\begin{equation}
	\label{eqn:mertens}
	\sum_{\kn}\frac{\Lambda_{\pi\times\tilde{\pi}}(\kn)}{\N\kn^{1+\eta}}\leq\frac{1}{\eta}+\frac{1}{2}\log C(\pi\times\tilde{\pi})+O(n^2[F:\Q]),
	\end{equation}
	which is easily seen using the shape of $L(s,\pi_{\infty}\times\tilde{\pi}_{\infty})$.  Since $L(s,\pi\times\tilde{\pi})$ has a pole of order 1 at $s=1$, a more careful look at the proof of \cite[Lemma 3.1]{ST} yields the estimates
	\[
	\sum_{\rho}\frac{1+\eta-\beta}{|1+\eta+it-\rho|^2}\leq 2\eta\log(C(\pi\times\tilde{\pi})(2+|t|)^{n^2[F:\Q]})+O(n^2[F:\Q]+\eta^{-1})
	\]
	and
	\begin{equation}
	\label{eqn:local_density_zeroes}
	\#\{\rho\colon |1+it-\rho|\leq\eta\}\leq 10\eta\log(C(\pi\times\tilde{\pi})(2+|t|)^{n^2[F:\Q]})+O(n^2[F:\Q]\eta+1)
	\end{equation}
	for $\eta>0$.  This leads to the choice
	\[
	\frac{\Cr{ZFR}}{\log(C(\pi\times\tilde{\pi})T^{n^2[F:\Q]})}<\eta\leq \frac{1}{200n^2[F:\Q]}
	\]
	when replicating the arguments in \cite[Section 4]{ST} and the choice
	\[
	K>2000\eta\log(C(\pi\times\tilde{\pi})(2+|t|)^{n^2[F:\Q]})+O(n^2[F:\Q]\eta+1)
	\]
	in \cite[Lemma 4.2]{ST}.  In order to replace the use of \cite[Theorem 2.4]{ST} by \cref{prop:BT} in the estimation of \cite[Equation 4.6]{ST}, one chooses
	\[
	K = 4800n^2\eta\log(C(\pi\times\tilde{\pi})(2+|t|)^{[F:\Q]})+O(n^2[F:\Q]\eta+1).
	\]
	\cref{prop:LFZDE} follows for $\sigma\leq 1-\frac{\Cr{ZFR}}{2\log(C(\pi\times\tilde{\pi})T^{n^2[F:\Q]})}$ from inserting these changes into the proof of \cite[Theorem 1.2]{ST}.  For $\sigma$ in the complementary range, \cref{thm:ZFR} ensures that $N(\sigma,T)\leq 1$.
\end{proof}

\subsection{Zero repulsion}

If $L(s,\pi\times\tilde{\pi})$ has a Landau--Siegel zero especially close to $s=1$, then the standard zero-free region in \cref{thm:ZFR} improves noticeably.

\begin{proposition}
\label{prop:Moreno}
	Let $\pi\in\mathfrak{F}_n$.  If the exceptional zero $\beta_1$ in \cref{thm:ZFR} exists, then here exist absolute and effectively computable constants $\Cl[abcon]{Moreno1},\Cl[abcon]{Moreno2}>0$ such that apart from the point $s=\beta_1$, $L(s,\pi\times\tilde{\pi})$ is nonzero in the region
	\[
	\re(s)\geq  1-\Cr{Moreno2}\frac{\log\Big(\cfrac{\Cr{Moreno1}}{(1-\beta_1)\log (C(\pi\times\tilde{\pi})(|\im(s)|+e)^{n^2[F:\Q]})}\Big)}{\log (C(\pi\times\tilde{\pi})(|\im(s)|+e)^{n^2[F:\Q]})}.
	\]
\end{proposition}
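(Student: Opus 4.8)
The plan is to establish the Deuring--Heilbronn phenomenon by the classical explicit-formula argument, adapted to $L(s,\pi\times\tilde\pi)$ using the tools already assembled. The key input is the nonnegativity of $\Lambda_{\pi\times\tilde\pi}(\kn)$ together with the auxiliary isobaric representation trick from the proof of \cref{thm:ZFR}. Concretely, fix a putative zero $\rho=\beta+i\gamma$ of $L(s,\pi\times\tilde\pi)$ with $\rho\neq\beta_1$, $\beta\ge 1/2$, and $\gamma$ arbitrary (including $\gamma=0$), and assume the exceptional zero $\beta_1$ exists. As in the proof of \cref{thm:ZFR}, form $\Pi=\pi\boxplus\pi\otimes|\det|^{i\gamma}\boxplus\pi\otimes|\det|^{-i\gamma}$, so that $\frac{L'}{L}(\sigma,\Pi\times\tilde\Pi)\le 0$ for $\sigma>1$. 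Summing $\re\,(\sigma-\rho')^{-1}\ge 0$ over the nontrivial zeroes $\rho'$ of $L(s,\Pi\times\tilde\Pi)$ and extracting the contributions of the triple pole at $s=1$, the order-$\ge 4$ zero at $s=\beta$, \emph{and} the real exceptional zero $\beta_1$ (which appears with multiplicity $3$ as a zero of $L(s,\Pi\times\tilde\Pi)$ coming from the three untwisted factors, since $\beta_1$ is real so $\beta_1\pm i\gamma$ need not be counted), I obtain, for $1<\sigma<\tfrac32$,
\[
\frac{4}{\sigma-\beta}+\frac{3}{\sigma-\beta_1}\le \frac{3}{\sigma-1}+\frac{4(\sigma-1)}{(\sigma-1)^2+\gamma^2}+\frac{2(\sigma-1)}{(\sigma-1)^2+4\gamma^2}+\Cr{Stirling}'\log\bigl(C(\pi\times\tilde\pi)(|\gamma|+e)^{n^2[F:\Q]}\bigr).
\]

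The second step is the optimization. Write $\mathcal L:=\log\bigl(C(\pi\times\tilde\pi)(|\gamma|+e)^{n^2[F:\Q]}\bigr)$ and $\lambda:=(1-\beta_1)\mathcal L$, which is a small positive quantity by \cref{thm:ZFR}. Drop the two nonpositive cosine-type terms (or bound them by $6/(\sigma-1)$), so that $\frac{4}{\sigma-\beta}+\frac{3}{\sigma-\beta_1}\le \frac{9}{\sigma-1}+\Cr{Stirling}'\mathcal L$ with a harmless adjustment of constants. Now choose
\[
\sigma-1=\frac{\kappa}{\mathcal L}\log\Bigl(\frac{\Cr{Moreno1}}{\lambda}\Bigr)
\]
for a suitable small absolute constant $\kappa$; because $\frac{3}{\sigma-\beta_1}=\frac{3}{(\sigma-1)+(1-\beta_1)}$ and $(1-\beta_1)=\lambda/\mathcal L$ is exponentially small in $\log(\Cr{Moreno1}/\lambda)^{-1}$ compared to $\sigma-1$, the term $\frac{3}{\sigma-\beta_1}$ very nearly cancels $\frac{3}{\sigma-1}$, leaving roughly $\frac{4}{\sigma-\beta}\le \frac{6}{\sigma-1}+\Cr{Stirling}'\mathcal L$, i.e. a contradiction with $\beta\ge 1-c\,(\sigma-1)$ unless $\beta$ satisfies the claimed bound $\beta\le 1-\Cr{Moreno2}\,\mathcal L^{-1}\log\bigl(\Cr{Moreno1}/\lambda\bigr)$. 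Tracking the constants through this elementary one-variable calculus gives the explicit $\Cr{Moreno1},\Cr{Moreno2}$. One must also separately handle small $|\gamma|$ (where the $\Pi$-trick loses a nontrivial bound, exactly as in \cref{thm:ZFR}) by working directly with $L(s,\pi\times\tilde\pi)$ and the pair of zeroes $\beta\pm i\gamma$ plus $\beta_1$; the same convexity estimate from Stirling applies.

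The main obstacle I anticipate is purely bookkeeping: arranging the inequality so that the $\frac{3}{\sigma-\beta_1}$ term genuinely neutralizes the $\frac{3}{\sigma-1}$ pole term, which requires the choice of $\sigma-1$ to be a large (but still $\ll 1/\mathcal L$) multiple of $1-\beta_1$, namely of order $\frac{1}{\mathcal L}\log\frac{1}{\lambda}$, and then verifying that with this $\sigma$ the exponential smallness of $1-\beta_1$ relative to $\sigma-1$ does the job with room to spare for the absolute constants. A secondary point is ensuring the argument is uniform down to $|\gamma|=0$: there the putative zero $\beta$ could coincide with or be very close to $\beta_1$, so one should assume $\beta\neq\beta_1$ (guaranteed since $\beta_1$ is the unique exceptional zero) and note that if $\beta$ were a second real zero close to $1$ then \cref{thm:ZFR} already forces $\beta$ outside the standard region, which is stronger than the claimed repulsion bound. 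With these two caveats addressed, the rest is the standard Deuring--Heilbronn computation, and no new analytic ingredient beyond \eqref{eqn:BH}, Stirling's formula, and the nonnegativity of $\Lambda_{\pi\times\tilde\pi}$ is needed.
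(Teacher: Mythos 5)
Your proposal has a genuine gap. The setup is fine: the inequality
\[
\frac{4}{\sigma-\beta}+\frac{3}{\sigma-\beta_1}\le \frac{3}{\sigma-1}+\frac{4(\sigma-1)}{(\sigma-1)^2+\gamma^2}+\frac{2(\sigma-1)}{(\sigma-1)^2+4\gamma^2}+c_0\log\bigl(C(\pi\times\tilde\pi)(|\gamma|+e)^{n^2[F:\Q]}\bigr)
\]
does follow from \eqref{eqn:starting_point} once $\beta_1$ (multiplicity $\geq 3$) is retained among the real zeroes of $L(s,\Pi\times\tilde{\Pi})$. The problem is the optimization. You assert that with $\sigma-1=\kappa\mathcal L^{-1}\log(\Cr{Moreno1}/\lambda)$ the terms $\frac{3}{\sigma-\beta_1}$ and $\frac{3}{\sigma-1}$ approximately cancel, ``leaving roughly $\frac{4}{\sigma-\beta}\le\frac{6}{\sigma-1}+c_0\mathcal L$, i.e.\ a contradiction.'' But this inequality is vacuously true for every $\beta<1$, since $\frac{4}{\sigma-\beta}\le\frac{4}{\sigma-1}<\frac{6}{\sigma-1}$; it gives no constraint on $\beta$. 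More carefully, isolating $\frac{4}{\sigma-\beta}$ leaves $\frac{3(1-\beta_1)}{(\sigma-1)(\sigma-\beta_1)}+(\gamma\text{-terms})+c_0\mathcal L$ on the right side, and a direct one-variable optimization over $\sigma-1=\eta/\mathcal L$ (writing $\lambda=(1-\beta_1)\mathcal L$) produces a lower bound of the form $1-\beta\geq\mathcal L^{-1}\bigl[\tfrac{4\eta(\eta+\lambda)}{c_1\eta+c_2\lambda+c_0\eta(\eta+\lambda)}-\eta\bigr]$, whose supremum over $\eta>0$ is $\asymp 1/c_0$ \emph{uniformly in $\lambda$}. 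In other words, the first-order Hadamard inequality recovers only the standard zero-free region $1-\beta\gtrsim\mathcal L^{-1}$, never the logarithmic gain $1-\beta\gtrsim\mathcal L^{-1}\log(\Cr{Moreno1}/\lambda)$ asserted in the proposition; no choice of $\sigma$ salvages this.

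The missing ingredient is precisely the high-derivative / Tur\'{a}n power-sum method of Lagarias--Montgomery--Odlyzko, which is what the paper uses. One differentiates $-\tfrac{L'}{L}$ a large number $2j-1$ of times, using the nonnegativity of $\Lambda_{\pi\times\tilde\pi}(\kn)$ together with the evaluations at $s=2$ and $s=2+i\gamma'$ to obtain the upper bound $\sum_n\re(z_n^{\,j})\le C j(1-\beta_1)$ on power sums of the quantities $z_n\in\{(2-\omega)^{-2},(2+i\gamma'-\omega)^{-2}\}$. The decisive input is then the Tur\'{a}n power-sum lower bound, which produces an index $j_1\ll\mathcal L$ with $\sum_n\re(z_n^{\,j_1})\gg\exp(-2j_1(1-\beta'))$. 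Combining the two yields $\exp(-c\mathcal L(1-\beta'))\ll\mathcal L(1-\beta_1)$, and taking logarithms gives exactly the claimed region. The exponential lower bound for the power sum is what transmutes the smallness of $1-\beta_1$ into a $\log\tfrac{1}{(1-\beta_1)\mathcal L}$ widening of the zero-free region; the single first-derivative inequality you rely on cannot produce it.
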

\begin{remark}
A similar result appears in \cite[Theorem 4.2]{Moreno}, but with a weaker notion of the analytic conductor.  We provide a self-contained proof for the sake of completeness.
\end{remark}

\begin{proof}
	Assume that the exceptional zero $\beta_1>0$ in \cref{thm:ZFR} exists.  We follow the ideas in \cite[Theorem 5.1]{LMO} as applied to the Dedekind zeta function $\zeta_F(s)$.  The nonnegativity of the Dirichlet coefficients $\Lambda_{\pi\times\tilde{\pi}}(\kn)$ will play a key role.
	
	Since $(s-1)L(s,\pi\times\tilde{\pi})$ is entire of order $1$, it has the Hadamard product representation
	\[
	(s-1)L(s,\pi\times\tilde{\pi})=s^r e^{\alpha_1+\alpha_2 s}\prod_{\omega\neq 0}\Big(1-\frac{s}{\omega}\Big)e^{s/\omega},
	\]
	where $r \geq 1$ is the order of the zero of $L(s,\pi\times \tilde{\pi})$ at $s = 0$ and the product runs through {\it all} non-zero roots $\omega$ of $L(s,\pi\times\tilde{\pi})$ (trivial and nontrivial).  In the region $\re(s)>1$, we take the $(2j-1)$-th derivative of $-\frac{L'}{L}(s,\pi\times\tilde{\pi})$, and we represent it in two ways---as a sum over zeroes using the Hadamard product, and as a Dirichlet series using \eqref{eqn:L'/L}. Thus, for $\re(s)>1$, we arrive at
	\begin{equation}
	\label{eqn:high-deriv1}
	\frac{1}{(2j-1)!}\sum_{\kn}\frac{\Lambda_{\pi\times\tilde{\pi}}(\kn)}{\N\kn^s}(\log\N\kn)^{2j-1}=\frac{1}{(s-1)^{2j}} -\sum_{\omega}\frac{1}{(s-\omega)^{2j}},
	\end{equation}
	where the sum is over all zeroes including $\omega = 0$.
	
	Let $\beta'+i\gamma'\neq\beta_1$ be a zero of $L(s,\pi\times\tilde{\pi})$ (trivial or nontrivial).  By considering \eqref{eqn:high-deriv1} at $s=2$ and $s=2+i\gamma'$, we conclude that
	\begin{equation}
	\label{eqn:high-deriv2}
	\begin{aligned}
	&\frac{1}{(2j-1)!}\sum_{\kn}\Lambda_{\pi\times\tilde{\pi}}(\kn)(\log\N\kn)^{2j-1}\N\kp^{-2m}(1+(\N\kp^m)^{-i\gamma'})\\
	&=1+\frac{1}{(1+i\gamma')^{2j}}-\frac{1}{(2-\beta_1)^{2j}}-\frac{1}{(2-\beta_1+i\gamma')^{2j}}-\sum_{n=1}^{\infty}z_n^j.
	\end{aligned}
	\end{equation}
	We sum \eqref{eqn:high-deriv2} at $s=2+i\gamma'$ with \eqref{eqn:high-deriv2} at $s = 2$, take real parts, and deduce via the nonnegativity of $\Lambda_{\pi\times\tilde{\pi}}(\kn)$ that
	\begin{equation}
		\label{eqn:high_deriv}
		\begin{aligned}
			0&\leq \frac{1}{(2j-1)!}\sum_{\kn}\frac{\Lambda_{\pi\times\tilde{\pi}}(\kn)(\log\N\kn)^{2j-1}}{\N\kn^2}(1+\cos(\gamma' \log \N\kn))\\
	&=1 - \frac{1}{(2-\beta_1)^{2j}}+\re\Big(\frac{1}{(1+i\gamma')^{2j}}-\frac{1}{(2-\beta_1+i\gamma')^{2j}}\Big)-\sum_{n=1}^{\infty}\re(z_n^j),	
		\end{aligned}
	\end{equation}
	where the set $\{z_n\colon n\geq 1\}$ is equal to $\{(2-\omega)^{-2}, (2+i\gamma'-\omega)^{-2} : \omega \neq \beta_1\}$ with the labeling chosen such that $|z_1|\geq |z_n|$ for all $n\geq 1$; in particular, since there exists some $n$ for which $z_n = 2 + i\gamma' - \rho' = 2 - \beta'$, we must have that $|z_1|\geq (2-\beta')^{-2}$. This lower bound on $|z_1|$ applied to \eqref{eqn:high_deriv} and a Taylor expansion imply that there exists a constant $\Cl[abcon]{M1}\geq 1$ such that
	\begin{equation}
	\label{eqn:turan_upper}
	\sum_{n=1}^{\infty}\re(z_n^j)\leq 1-\frac{1}{(2-\beta_1)^{2j}}+\re\Big(\frac{1}{(1+i\gamma')^{2j}}-\frac{1}{(2-\beta_1+i\gamma')^{2j}}\Big)\leq \Cr{M1} j(1-\beta_1).
	\end{equation}
	
	We need a lower bound for the left hand side of \eqref{eqn:turan_upper}.  Define
	\[
	L:=|z_1|^{-1} \sum_{n=1}^{\infty}|z_n|.
	\]
	For $t\in\R$, the number of nontrivial zeroes $\rho=\beta+i\gamma$ of $L(s,\pi\times\tilde{\pi})$ satisfying $|\gamma-t|\leq 1$ is $O(\log C(\pi\times\tilde{\pi},t))$.  Thus by \eqref{eqn:BH}, there exist constants $\Cl[abcon]{M2}\geq 1$ and $\Cl[abcon]{M3}\geq 1$ such that
	\begin{equation}
	\label{eqn:L_bound_upper}
	L\leq\Cr{M2} (2-\beta')^2\sum_{\omega}\Big(\frac{1}{|2-\omega|^2}+\frac{1}{|2+i\gamma'-\omega|^2}\Big)\leq \Cr{M3} \log(C(\pi\times\tilde{\pi})(e+|\gamma'|)^{n^2[F:\Q]}).
	\end{equation}
	The lower bound for power sums in \cite[Theorem 4.2]{LMO} implies that there exists an integer $1\leq j_1\leq 24L$ such that
	\begin{equation}
	\label{eqn:turan_lower}
	8 \sum_{n=1}^{\infty}\re(z_n^{j_1})\geq (2-\beta')^{-2j_1}\geq \exp(-2j_1(1-\beta')).
	\end{equation}
	We combine \eqref{eqn:turan_upper} and \eqref{eqn:turan_lower} to obtain the bound $\exp(-2j_1(1-\beta'))\leq 8\Cr{M1} j_1(1-\beta_1)$.  Since $j_1\leq 24 L\leq 24\Cr{M3} \log(C(\pi\times\tilde{\pi})(e+|\gamma'|)^{n^2[F:\Q]})$ per \eqref{eqn:L_bound_upper}, we arrive at the bound
	\[
	\exp(-48\Cr{M3} \log(C(\pi\times\tilde{\pi})(e+|\gamma'|)^{n^2[F:\Q]})(1-\beta'))\leq 192\Cr{M1}\Cr{M3} \log(C(\pi\times\tilde{\pi})(e+|\gamma'|)^{n^2[F:\Q]})(1-\beta_1).
	\]
	We obtain the desired result by solving the above inequality for $\beta'$.
\end{proof}

We use \cref{prop:Moreno} to determine an upper bound for $\beta_1$ (if it exists).

\begin{corollary}
	\label{cor:Siegel}
	There exists an absolute and effectively computable constant $\Cl[abcon]{Siegel}>0$ such that if the exceptional zero $\beta_1$ in \cref{thm:ZFR} exists, then $\beta_1\leq 1-C(\pi\times\tilde{\pi})^{-\Cr{Siegel}}$.
\end{corollary}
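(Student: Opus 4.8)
The plan is to play the zero-repulsion estimate of \cref{prop:Moreno} against the abundance of low-lying zeroes of $L(s,\pi\times\tilde{\pi})$: if $\beta_1$ were too close to $1$, then \cref{prop:Moreno} would push \emph{every} other zero so far to the left that no nontrivial zero of bounded height could survive, which is absurd. We may assume $C(\pi\times\tilde{\pi})$ exceeds a suitable absolute constant; otherwise, since $C(\pi\times\tilde{\pi})\geq e^{n^2[F:\Q]/2}$ forces $n$, $[F:\Q]$, $D_F$, $\kq_\pi$, and the archimedean parameters of $\pi$ to be bounded, only finitely many $\pi$ remain, and the claim follows from $\beta_1<1$ (which \cref{thm:ZFR} gives, $s=1$ being a pole) once $\Cr{Siegel}$ is small enough. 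Put $\mathcal{L}:=\log(C(\pi\times\tilde{\pi})(1+e)^{n^2[F:\Q]})$; the bound $n^2[F:\Q]\leq 2\log C(\pi\times\tilde{\pi})$ (again from $C(\pi\times\tilde{\pi})\geq e^{n^2[F:\Q]/2}$) gives $\mathcal{L}\asymp\log C(\pi\times\tilde{\pi})$. If $1-\beta_1\geq \Cr{Moreno1}/\mathcal{L}$ we are done, since $\Cr{Moreno1}/\mathcal{L}\gg 1/\log C(\pi\times\tilde{\pi})\geq C(\pi\times\tilde{\pi})^{-\Cr{Siegel}}$ for $C(\pi\times\tilde{\pi})$ large; so assume $1-\beta_1<\Cr{Moreno1}/\mathcal{L}$.

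First I would invoke the standard zero-count: because $\Lambda(s,\pi\times\tilde{\pi})$ is entire of order $1$ with the functional equation recorded in \cref{sec:L-functions}, it has $\gg\log C(\pi\times\tilde{\pi})$ nontrivial zeroes with $|\im(s)|\leq 1$ (see, e.g., \cite[Theorem 5.8]{IK}), so there is one, say $\rho_1=\beta_1'+i\gamma_1'$, with $\rho_1\neq\beta_1$ and $|\gamma_1'|\leq 1$; all nontrivial zeroes lie in $0\leq\re(s)\leq 1$, so $\beta_1'\geq 0$. Since $\rho_1\neq\beta_1$ is a zero, it lies outside the zero-free region of \cref{prop:Moreno}, so with $\mathcal{L}_1:=\log(C(\pi\times\tilde{\pi})(|\gamma_1'|+e)^{n^2[F:\Q]})\leq\mathcal{L}$ we obtain
\[
0\leq\beta_1'<1-\Cr{Moreno2}\frac{\log\!\big(\Cr{Moreno1}/((1-\beta_1)\mathcal{L}_1)\big)}{\mathcal{L}_1}.
\]
From $1-\beta_1<\Cr{Moreno1}/\mathcal{L}$ we get $\Cr{Moreno1}/(1-\beta_1)>\mathcal{L}\geq\mathcal{L}_1$, so $t\mapsto t^{-1}\log\!\big(\Cr{Moreno1}(1-\beta_1)^{-1}t^{-1}\big)$ is nonnegative and decreasing for $0<t\leq\mathcal{L}$; replacing $\mathcal{L}_1$ by $\mathcal{L}$ therefore only enlarges the right-hand side, and a fortiori $\Cr{Moreno2}\,\mathcal{L}^{-1}\log\!\big(\Cr{Moreno1}/((1-\beta_1)\mathcal{L})\big)<1$. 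Solving for $1-\beta_1$ gives $1-\beta_1>\Cr{Moreno1}\,\mathcal{L}^{-1}e^{-\mathcal{L}/\Cr{Moreno2}}$, and since $\mathcal{L}\asymp\log C(\pi\times\tilde{\pi})$ this is $\gg C(\pi\times\tilde{\pi})^{-\Cr{Siegel}}$ for a suitable absolute, effectively computable $\Cr{Siegel}$ depending only on $\Cr{Moreno2}$ and the implied constant in $\mathcal{L}\asymp\log C(\pi\times\tilde{\pi})$. This proves the corollary (and, via \eqref{eqn:BH}, the corresponding bound in terms of $C(\pi)$).

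The one genuine input beyond routine constant-chasing is the \emph{lower} bound $\#\{\rho:|\im(\rho)|\leq 1\}\gg\log C(\pi\times\tilde{\pi})$ used to locate a low-lying zero different from $\beta_1$; without it one could not exclude $\beta_1$ being the unique zero of small height. This is classical: $\Lambda(s,\pi\times\tilde{\pi})$ cannot be a polynomial times an exponential, since its archimedean factor forces super-exponential growth along the positive real axis, so it has infinitely many zeroes, all in the critical strip and symmetric about $\re(s)=\tfrac12$ and about $\R$, and a contour-integration count (Jensen's formula / the argument principle, exactly as in \cite[Theorem 5.8]{IK}) yields the stated lower bound; alternatively, when $\varepsilon(\pi\times\tilde{\pi})=-1$ the point $s=\tfrac12$ is already such a zero. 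Everything else—the monotonicity manipulation, the reduction to large $C(\pi\times\tilde{\pi})$, and the application of \cref{prop:Moreno}—is elementary.
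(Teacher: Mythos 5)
Your overall strategy---locate a zero $\neq\beta_1$ at a known height, apply the repulsion inequality of \cref{prop:Moreno} to it, and read off a lower bound on $1-\beta_1$---is the same as the paper's, and the monotonicity manipulation and the reduction to large $C(\pi\times\tilde{\pi})$ are fine. But your \emph{choice of reference zero} is genuinely different, and that is where the one real gap sits. The paper applies the repulsion inequality to a \emph{real trivial} zero: it observes that the trivial zeroes of $\zeta_F(s)$ lie among the trivial zeroes of $L(s,\pi\times\tilde{\pi})$, so there is a real zero $\beta'$ in $\{-2,-1,0\}$, and if $1-\beta_1$ were as small as $(C(\pi\times\tilde{\pi})e^{n^2[F:\Q]})^{-c}$ with $c$ large, \cref{prop:Moreno} (whose proof actually applies to all zeroes, trivial or not) would force every real zero $\neq\beta_1$ to the left of $-3$, a contradiction. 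This sidesteps any counting of nontrivial zeroes entirely and needs no zero-density input.

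You instead invoke a \emph{nontrivial} low-lying zero and justify its existence by the count $\#\{\rho: |\im\rho|\le 1\}\gg\log C(\pi\times\tilde{\pi})$, citing \cite[Theorem 5.8]{IK}. That theorem gives an asymptotic of the form $N_f(T) = \frac{T}{\pi}\log\frac{\mathfrak{q}(f)T^d}{(2\pi e)^d}+O(\log\mathfrak{q}(f,T))$; at $T$ of bounded size the error term $O(\log\mathfrak{q}(f,T))$ is of the \emph{same order} as the putative main term $\asymp\log C(\pi\times\tilde{\pi})$, and with an unspecified implied constant this does not yield a lower bound. (For $\zeta$ itself $N(1)=0$, so the constants genuinely matter.) The lower bound you want is nevertheless true in the present regime---it follows from the Hadamard factorisation evaluated at $s=2$ (giving $\sum_\rho\Re\frac{1}{2-\rho}=\frac12\log C(\pi\times\tilde{\pi})+O(n^4[F:\Q])$, since $|\frac{L'}{L}(2,\pi\times\tilde{\pi})|\ll n^4[F:\Q]$ by \eqref{eqn:LRS_2}), together with the local density bound to discard $|\gamma|>K$ for a suitable absolute constant $K$---but that is a separate argument, not a corollary of the asymptotic zero-counting formula, and it produces zeroes at height $\le K$ rather than height $\le 1$ (a harmless change since $K$ is absolute and $\log C(\pi\times\tilde{\pi})\gg n^2[F:\Q]\log K$ under the paper's standing hypothesis). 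Your fallback observation ($\varepsilon(\pi\times\tilde{\pi})=-1\Rightarrow L(\tfrac12,\pi\times\tilde{\pi})=0$) only covers the odd sign case. So: the route is salvageable and genuinely different, but as written the existence of the auxiliary nontrivial zero is asserted, not proved, whereas the paper's trivial-zero shortcut avoids the issue outright.
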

\begin{proof}
	If $\beta'\neq \beta_1$ is a real zero of $L(s,\pi\times\tilde{\pi})$ (trivial or nontrivial), then by \cref{prop:Moreno},
	\begin{equation}
	\label{eqn:moreno}
	\begin{aligned}
	\beta'&\leq 1-\frac{\Cr{Moreno2}\log\frac{1}{1-\beta_1}+\Cr{Moreno2}\log\Cr{Moreno1}-\Cr{Moreno2}\log\log(C(\pi\times\tilde{\pi})e^{n^2[F:\Q]})}{\log(C(\pi\times\tilde{\pi})e^{n^2[F:\Q]})}.
	\end{aligned}
	\end{equation}
First, consider the case where $C(\pi\times\tilde{\pi})e^{n^2[F:\Q]}$ is large enough so that
	\begin{equation}
	\label{eqn:bound_1/2}
	-\frac{\Cr{Moreno2}\log\Cr{Moreno1}-\Cr{Moreno2}\log\log(C(\pi\times\tilde{\pi})e^{n^2[F:\Q]})}{\log(C(\pi\times\tilde{\pi})e^{n^2[F:\Q]})}\leq \frac{1}{2}.
	\end{equation}
	This is satisfied when $C(\pi\times\tilde{\pi})e^{n^2[F:\Q]}$ is larger than a certain absolute and effectively computable constant.  By \eqref{eqn:moreno}, we have
	\[
	\beta'\leq \frac{1}{2}-\Cr{Moreno2}\frac{\log\frac{1}{1-\beta_1}}{\log(C(\pi\times\tilde{\pi})e^{n^2[F:\Q]})}.
	\]
	Define $c$ so that
	\[
	\beta_1=1-(C(\pi\times\tilde{\pi})e^{n^2[F:\Q]})^{-c}.
	\]
	It follows that $\beta'\leq \frac{1}{2}-\Cr{Moreno2}c$.  If $c\geq 7/(2\Cr{Moreno2})$, then we have determined that $L(\sigma,\pi\times\tilde{\pi})\neq 0$ for all $\sigma> -3$.  This contradicts the fact that the trivial zeroes of $\zeta_F(s)$ are included among the trivial zeroes of $L(s,\pi\times\tilde{\pi})$, and $\zeta_F(s)$ has a trivial zero in the set $\{-2,-1,0\}$.  We conclude that $\beta_1\leq 1-(C(\pi\times\tilde{\pi})e^{n^2[F:\Q]})^{-7/(2\Cr{Moreno2})}$.  	The Minkowski bound $[F:\Q]\ll \log D_F$ yields $\beta_1\leq 1-C(\pi\times\tilde{\pi})^{-\Cr{Siegel}}$ once $\Cr{Siegel}$ is sufficiently large.
	
	If $C(\pi\times\tilde{\pi})e^{n^2[F:\Q]}$ is not large enough to satisfy \eqref{eqn:bound_1/2}, then there exists an absolute and effectively computable constant $0<B<1$ such that any real zero $\beta$ of $L(s,\pi\times\tilde{\pi})$ satisfies
	\[
	\beta\leq 1-B=1-C(\pi\times\tilde{\pi})^{-\frac{\log B^{-1}}{\log C(\pi\times\tilde{\pi})}}\leq 1-C(\pi\times\tilde{\pi})^{-\Cr{Siegel}}
	\]
	once $\Cr{Siegel}$ is made sufficiently large.
\end{proof}

\section{Proof of \texorpdfstring{\cref{thm:main_theorem}}{Theorem \ref*{thm:main_theorem}}}

We prove the result when the exceptional zero $\beta_1$ in \cref{thm:ZFR} exists; the complementary case is easier.  In order to apply \cref{prop:BT}, we make the initial restrictions
\begin{equation}
\label{eqn:ranges_prelim}
\log\log C(\pi\times\tilde{\pi})\gg n^4[F:\Q]^2,\quad x\geq C(\pi\times\tilde{\pi})^{32n^2},\quad 1\leq T\leq x^{\frac{1}{16n^2[F:\Q]}}.
\end{equation}
We begin with the version of Perron's integral formula proved in \cite[Corollary 2.2]{LY} applied to $L(s,\pi\times\tilde{\pi})$:
\begin{multline*}
\sum_{\N\kn\leq x}\Lambda_{\pi\times\tilde{\pi}}(\kn)=\frac{1}{2\pi i}\int_{1+\frac{1}{\log x}-iT}^{1+\frac{1}{\log x}+iT}-\frac{L'}{L}(s,\pi\times\tilde{\pi})\frac{x^s}{s}\,ds	\\
+O\Big(\sum_{|\N\kn-x| \leq \frac{x}{\sqrt{T}}}\Lambda_{\pi\times\tilde{\pi}}(\kn)+\frac{x}{\sqrt{T}}\sum_{\kn}\frac{\Lambda_{\pi\times\tilde{\pi}}(\kn)}{\N\kn^{1+\frac{1}{\log x}}}\Big).
\end{multline*}
One constructs a contour given by the perimeter of the rectangle with vertices $1+\frac{1}{\log x}\pm iT$ and $-U\pm iT$, where $U\geq 1$ is fixed, arbitrarily large, and chosen so that no trivial zero of $L(s,\pi\times\tilde{\pi})$ lies within a distance of $\frac{1}{4n^2}$ from the line $\re(s)=-U$.  The residues in the interior of this rectangle arise from the trivial zeroes (which are handled by \eqref{eqn:LRS_2}), the nontrivial zeroes $\rho=\beta+i\gamma$ with $\gamma\in\R$ and $0<\beta<1$, and the residue at $s=0$.

The bound \eqref{eqn:local_density_zeroes} and Stirling's formula, when applied to the logarithmic derivative of the Hadamard product for $L(s,\pi\times\tilde{\pi})$, imply that if $-\frac{1}{2}\leq\re(s)\leq 2$, then
\begin{multline*}
-\frac{L'}{L}(s,\pi\times\tilde{\pi}) = \frac{1}{s}+\frac{1}{s-1}-\sum_{|s-\rho|<1}\frac{1}{s-\rho}\\
-\sum_{|s+\mu_{j,j',\pi\times\tilde{\pi}}(v)|<1}\frac{1}{s+\mu_{j,j',\pi\times\tilde{\pi}}(v)}+O(\log C(\pi\times\tilde{\pi},|\im(s)|)).
\end{multline*}
(See also \cite[Proposition 5.7]{IK}.)  This can be extended to a wider strip using the functional equation for $L(s,\pi\times\tilde{\pi})$, leading to a strong bound on both the residue at $s=0$ and the legs of the contour to the left of $\re(s) = 1+\frac{1}{\log x}$.  Ultimately, with the help of \eqref{eqn:ranges_prelim}, we arrive at
\begin{multline*}
	\sum_{\N\kn\leq x}\Lambda_{\pi\times\tilde{\pi}}(\kn)=x-\sum_{\substack{0 < \beta < 1 \\ |\gamma|\leq T}} \frac{x^{\rho}}{\rho}+O\Big(\sum_{x - \frac{x}{\sqrt{T}} \leq \N\kn \leq x + \frac{x}{\sqrt{T}}} \Lambda_{\pi\times\tilde{\pi}}(\kn)+\frac{x}{\sqrt{T}}\sum_{\kn}\frac{\Lambda_{\pi\times\tilde{\pi}}(\kn)}{\N\kn^{1+\frac{1}{\log x}}}+\frac{x(\log  x)^2}{T}\Big).
\end{multline*}
(See also \cite[Sections 4 and 5]{LY}.)  The bounds \eqref{eqn:mertens} and \eqref{eqn:ranges_prelim} imply that
\begin{align*}
\sum_{\N\kn\leq x}\Lambda_{\pi\times\tilde{\pi}}(\kn)=x-\sum_{\substack{0 < \beta < 1 \\ |\gamma|\leq T}} \frac{x^{\rho}}{\rho}+O\Big(\sum_{x - \frac{x}{\sqrt{T}} \leq \N\kn \leq x + \frac{x}{\sqrt{T}}}\Lambda_{\pi\times\tilde{\pi}}(\kn)+\frac{x(\log x)^2}{\sqrt{T}}\Big).
\end{align*}
By \cref{prop:BT} and \eqref{eqn:ranges_prelim}, we have
\begin{equation}
\label{eqn:explicit_formula}
\sum_{\N\kn\leq x}\Lambda_{\pi\times\tilde{\pi}}(\kn)=x-\sum_{\substack{0 < \beta < 1 \\ |\gamma|\leq T}}\frac{x^{\rho}}{\rho}+O\Big(\frac{x(\log x)^2}{\sqrt{T}}\Big).
\end{equation}

It follows from \eqref{eqn:explicit_formula} that for $2\leq h\leq x$, we have that
\[
\sum_{x<\N\kn\leq x+h}\Lambda_{\pi\times\tilde{\pi}}(\kn)=h-\sum_{\substack{0 < \beta < 1 \\ |\gamma|\leq T}} \frac{(x+h)^{\rho}-x^{\rho}}{\rho}+O\Big(\frac{x(\log x)^2}{\sqrt{T}}\Big).
\]
We observe that
\[
\Big|\frac{(x+h)^{\rho}-x^{\rho}}{\rho}\Big|\leq\min\Big\{hx^{\beta-1},\frac{3x^{\beta}}{|\gamma|}\Big\}
\]
using the bounds
\[
\Big|\frac{(x+h)^{\rho}-x^{\rho}}{\rho}\Big|=\Big|\int_x^{x+h}\tau^{\rho-1}\,d\tau\Big|\leq \int_x^{x+h}\tau^{\beta-1}\,d\tau\leq  hx^{\beta-1}
\]
and
\[
\Big|\frac{(x+h)^{\rho}-x^{\rho}}{\rho}\Big|\leq \frac{(2x)^{\beta}+x^{\beta}}{|\gamma|}\leq \frac{3x^{\beta}}{|\gamma|}.
\]
Finally, note that by the mean value theorem, there exists $\xi\in[x,x+h]$ such that
\[
\frac{(x+h)^{\beta_1}-x^{\beta_1}}{\beta_1}=h\xi^{\beta_1-1}.
\]
Thus, there exists $\xi\in[x,x+h]$ such that
\begin{equation}
\label{eqn:pre_zero_count_sum}
\begin{aligned}
\Big|\frac{1}{h}\sum_{x<\N\kn\leq x+h}\Lambda_{\pi\times\tilde{\pi}}(\kn) - 1 + \xi^{\beta_1-1}\Big|\ll \sideset{}{'}\sum_{\substack{0<\beta<1 \\ |\gamma|\leq \frac{x\log x}{h} }} x^{\beta-1}+\frac{x}{h}\sideset{}{'}\sum_{\substack{0<\beta<1 \\ \frac{x\log x}{h}<|\gamma|\leq T }}\frac{x^{\beta-1}}{|\gamma|}+\frac{x(\log x)^2}{h\sqrt{T}},
\end{aligned}
\end{equation}
where $\sum'$ denotes a sum over nontrivial zeroes $\rho=\beta+i\gamma\neq\beta_1$.  Next, we subdivide the zeroes into $O(\log x)$ dyadic intervals $e^j<|\gamma|\leq e^{j+1}$ and deduce that the right hand side of \eqref{eqn:pre_zero_count_sum} is
\begin{equation}
\label{eqn:sum_zeroes}
\ll \sideset{}{'}\sum_{\substack{ |\gamma|\leq \frac{x\log x}{h} \\ 0<\beta<1}}x^{\beta-1}+\frac{x\log x}{h}\sup_{\frac{x\log x}{h}\leq M\leq T}\frac{1}{M}\sideset{}{'}\sum_{\substack{ |\gamma| \leq M \\ 0<\beta<1}}x^{\beta-1}+\frac{x(\log x)^2}{h\sqrt{T}}.
\end{equation}

At this stage, we require some constraints on the relationships amongst $x$, $T$, $h$, and $C(\pi\times\tilde{\pi})$.  To help simplify future calculations, we choose $A\geq 10^7$ (the constant in the exponent in \cref{prop:LFZDE}) and
\begin{equation}
\label{eqn:choices}
C(\pi\times\tilde{\pi})\leq x^{\theta},\quad x^{1-\delta}\leq h\leq x,\quad T=x^{\frac{1}{4An^2[F:\Q]}},\quad \theta,\delta[F:\Q]\in\Big[0,\frac{1}{16An^2}\Big].
\end{equation}
We will take $A$ to be sufficiently large later on.  Subject to \eqref{eqn:choices}, if $\beta+i\gamma$ is a zero other than $\beta_1$, then by \cref{thm:ZFR}, we have that
\begin{equation}
\label{eqn:zfr_x}
\beta\leq 1-2\Cr{ZFR}A/\log x,\qquad |\gamma|\leq T=x^{\frac{1}{4An^2[F:\Q]}}.
\end{equation}
By \cref{prop:Moreno}, we also have that
\begin{equation}
\label{eqn:dh_x}
\beta\leq 1-2\Cr{Moreno2}A\log\Big(\frac{2A\Cr{Moreno1}}{(1-\beta_1)\log x}\Big)/\log x,\qquad|\gamma|\leq T=x^{\frac{1}{4An^2[F:\Q]}}.
\end{equation}
Finally, by \cref{prop:LFZDE}, we have that
\begin{equation}
\label{eqn:lfzde_x}
N(\sigma,M)\ll n^2[F:\Q]x^{\frac{1-\sigma}{2}},\qquad M\leq T=x^{\frac{1}{4An^2[F:\Q]}}.
\end{equation}
Now, if $M\leq T$ and
\[
\eta=\max\Big\{\Cr{ZFR},\Cr{Moreno2}\log\Big(\frac{2A\Cr{Moreno1}}{(1-\beta_{1})\log x}\Big)\Big\},
\]
then by \eqref{eqn:zfr_x}, \eqref{eqn:dh_x}, and \eqref{eqn:lfzde_x}, we have that
\begin{align}
\label{eqn:sum'}
\sideset{}{'}\sum_{\substack{ |\gamma| \leq M \\ 0<\beta<1}}x^{\beta-1}\ll \log x\int_0^{1-\frac{2A\eta}{\log x}}N(\sigma,M)x^{\sigma-1}d\sigma &\ll n^2[F:\Q]\log x\int_0^{1-\frac{2A\eta}{\log x}}x^{\frac{\sigma-1}{2}}d\sigma\notag\\
&\ll n^2[F:\Q]e^{-A\eta}\notag\\
&=n^2[F:\Q]\min\Big\{e^{-\Cr{ZFR}A},\Big(\frac{(1-\beta_1)\log x}{2A\Cr{Moreno1}}\Big)^{\Cr{Moreno2}A}\Big\}\\
&\ll n^2[F:\Q]e^{-\Cr{ZFR}A}\min\{1,((1-\beta_1)\log x)^{\Cr{Moreno2}A}\}\notag.
\end{align}
Since $\min\{1,a^b\}\leq \min\{1,a\}$ for $a>0$ and $b\geq 1$, it follows that if $A>1/\Cr{Moreno2}$, then \eqref{eqn:sum'} is
\[
\ll n^2[F:\Q]e^{-\Cr{ZFR}A}\min\{1,(1-\beta_1)\log x\}.
\]
We apply this bound to the two sums in \eqref{eqn:sum_zeroes} and invoke \eqref{eqn:pre_zero_count_sum} and \eqref{eqn:choices} to deduce the bound
\begin{equation}
\label{after}
\begin{aligned}
&\Big|\frac{1}{h}\sum_{x<\N\kn\leq x+h}\Lambda_{\pi\times\tilde{\pi}}(\kn) - 1 + \xi^{\beta_1-1}\Big|\\
&\ll n^2[F:\Q]e^{-\Cr{ZFR}A}\min\{1,(1-\beta_1)\log x\}+x^{-\frac{1}{16An^2[F:\Q]}}(\log x)^2.
\end{aligned}
\end{equation}
If $x\geq (An^2[F:\Q])^{900 A n^2[F:\Q]}$, then
\[
\log x\leq x^{\frac{1}{544An^2[F:\Q]}},
\]
and \eqref{after} is
\begin{equation}
\label{eqn:after2}
\ll n^2[F:\Q]e^{-\Cr{ZFR}A}\min\{1,(1-\beta_1)\log x\}+x^{-\frac{1}{17An^2[F:\Q]}}.
\end{equation}
If we also require that $x\geq (e^{\Cr{ZFR}A}C(\pi\times\tilde{\pi})^{\Cr{Siegel}})^{17An^2[F:\Q]}$, then by \cref{cor:Siegel}, \eqref{eqn:after2} is
\[
\ll n^2[F:\Q]e^{-\Cr{ZFR}A}\min\{1,(1-\beta_1)\log x\}\ll n^2[F:\Q]e^{-\Cr{ZFR}A}\min\{1,(1-\beta_1)\log \xi\}.
\]
The bound $\min\{1,(1-\beta_1)\log \xi\}\ll (1-\xi^{\beta_1-1})$ is easily demonstrated.  \cref{thm:main_theorem} follows once we invoke \eqref{eqn:BH} to bound $C(\pi\times\tilde{\pi})$.

\section{Proof of \cref{hypH}}

In light of \cref{thm:main_theorem} and the nonnegativity of $\Lambda_{\pi\times\tilde{\pi}}(\kn)$, it suffices to prove that if $n\in\{1,2,3,4\}$, $\pi\in\mathfrak{F}_n$, and $x>C(\pi)$, then
\begin{equation}
\label{eqn:prime_powers_bounded}
\sum_{\substack{\N\kn\in[x,2x] \\ \textup{$\kn$ composite}}}\Lambda_{\pi\times\tilde{\pi}}(\kn)\ll n^2 x^{1-\frac{1}{2(n^2+1)}}(\log x)^3.
\end{equation}
Our main tools for this estimate are existing bounds towards GRC, namely \eqref{eqn:LRS_finite} and \eqref{eqn:LRS_2}, and the following result of Brumley.

\begin{lemma}
\label{lem:second_order}
	Let $n\in\{1,2,3,4\}$ and $\pi\in\mathfrak{F}_n$.  If $\epsilon>0$, then
\begin{equation}
\label{eqn:secondorder}
\prod_{\kp}\sum_{r=0}^{\infty}\frac{\max_{1\leq j\leq n}|\alpha_{j,\pi}(\kp)|^{2r}}{\N\kp^{r(1+\epsilon)}}\ll_{n,\epsilon}C(\pi)^{\epsilon}.
\end{equation}
\end{lemma}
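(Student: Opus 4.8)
The plan is to collapse the Euler product into a sum over prime ideals via the geometric-series identity, to estimate the summand by locating the Satake parameters as the roots of a monic polynomial whose coefficients are Dirichlet coefficients of automorphic $L$-functions of polynomially bounded conductor, and then to bound the resulting sum by standard estimates for those $L$-functions at $1+\epsilon$. Concretely: by \eqref{eqn:LRS_finite} we have $|\alpha_{j,\pi}(\kp)| \le \N\kp^{\theta_n}$ with $\theta_n < \tfrac12$ for every $\kp$ and $j$, so, writing $M(\kp) := \N\kp^{-1-\epsilon}\max_{1\le j\le n}|\alpha_{j,\pi}(\kp)|^2$, we have $0 \le M(\kp) \le \N\kp^{2\theta_n-1-\epsilon} \le 2^{2\theta_n-1} < 1$; hence the inner sum equals the geometric series $(1-M(\kp))^{-1}$. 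Since $-\log(1-u) \le (1-2^{2\theta_n-1})^{-1}u$ on $[0,2^{2\theta_n-1}]$, taking logarithms gives
\[
\log \prod_{\kp}\sum_{r=0}^{\infty}\frac{\max_{1\le j\le n}|\alpha_{j,\pi}(\kp)|^{2r}}{\N\kp^{r(1+\epsilon)}} \ll_n \sum_{\kp}\frac{\max_{1\le j\le n}|\alpha_{j,\pi}(\kp)|^2}{\N\kp^{1+\epsilon}},
\]
so it suffices to prove that the right-hand side is $\ll_n \epsilon\log C(\pi) + O_{n,\epsilon}(1)$; the lemma then follows on replacing $\epsilon$ by a suitable constant multiple of the target exponent.

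For $\kp\nmid\kq_\pi$ the numbers $\alpha_{1,\pi}(\kp),\dots,\alpha_{n,\pi}(\kp)$ are exactly the roots of $z^n - e_1 z^{n-1} + \dots + (-1)^n e_n$, where $e_k$, the $k$th elementary symmetric function of the $\alpha_{j,\pi}(\kp)$, equals $\lambda_{\wedge^k\pi}(\kp)$; Cauchy's elementary bound for the roots of a monic polynomial then yields $\max_j|\alpha_{j,\pi}(\kp)|^2 \le 4\max_{1\le k\le n}|\lambda_{\wedge^k\pi}(\kp)|^{2/k}$. The isomorphisms $\wedge^n(\mathrm{std}_n) \cong \det$ and $\wedge^{n-1}(\mathrm{std}_n) \cong \mathrm{std}_n^{\vee}\otimes\det$ give $\wedge^n\pi = \omega_\pi$ and $\wedge^{n-1}\pi = \tilde\pi\otimes\omega_\pi$, so at unramified $\kp$ one has $|\lambda_{\wedge^n\pi}(\kp)| = |\omega_\pi(\kp)| = 1$ and $|\lambda_{\wedge^{n-1}\pi}(\kp)| = |\lambda_\pi(\kp)|$; together with $\lambda_{\wedge^1\pi}(\kp) = \lambda_\pi(\kp)$ this exhausts every $k$ when $n\le 3$, giving $\max_j|\alpha_{j,\pi}(\kp)|^2 \ll \lambda_{\pi\times\tilde\pi}(\kp) + 1$ (using $|\lambda_\pi(\kp)|^2 = \lambda_{\pi\times\tilde\pi}(\kp)$ for unramified $\kp$). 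When $n = 4$ the only remaining value is $k = 2$, for which Kim's exterior-square functoriality for $\GL_4$ supplies an isobaric automorphic representation $\wedge^2\pi$ of $\GL_6(\A_F)$ with conductor $\ll C(\pi)^{O(1)}$ and with $\lambda_{\wedge^2\pi}(\kp)$ equal to the second elementary symmetric function of the $\alpha_{j,\pi}(\kp)$ at unramified $\kp$; hence $\max_j|\alpha_{j,\pi}(\kp)|^2 \ll \lambda_{\pi\times\tilde\pi}(\kp) + |\lambda_{\wedge^2\pi}(\kp)| + 1$.

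It remains to sum these bounds over $\kp\nmid\kq_\pi$. By the nonnegativity of $\Lambda_{\pi\times\tilde\pi}(\kn)$ we have $\sum_{\kp}\lambda_{\pi\times\tilde\pi}(\kp)\N\kp^{-1-\epsilon} \le \log L(1+\epsilon,\pi\times\tilde\pi)$, and \cref{lem:convexity} (letting $\sigma\to 1^+$, so that the left side there becomes the residue of $L(s,\pi\times\tilde\pi)$ at $s=1$) together with positivity of the Dirichlet coefficients gives $L(1+\epsilon,\pi\times\tilde\pi) \ll_{n,\epsilon} C(\pi)^{\epsilon}$, so this term contributes $\ll_n \epsilon\log C(\pi) + O_{n,\epsilon}(1)$. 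For $n=4$, the inequality $2|\lambda_{\wedge^2\pi}(\kp)| \le 1 + |\lambda_{\wedge^2\pi}(\kp)|^2$ combined with $|\lambda_{\wedge^2\pi}(\kp)|^2 = \lambda_{\wedge^2\pi\times\widetilde{\wedge^2\pi}}(\kp)$ at unramified $\kp$ reduces the $\wedge^2$-term to $\sum_{\kp}\N\kp^{-1-\epsilon}$ plus $\log L(1+\epsilon,\wedge^2\pi\times\widetilde{\wedge^2\pi})$, and the latter is handled exactly as before because $L(s,\wedge^2\pi\times\widetilde{\wedge^2\pi})$ is a genuine Rankin--Selberg $L$-function with a pole of bounded order at $s=1$ and analytic conductor $\ll C(\pi)^{O_n(1)}$. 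Finally $\sum_{\kp}\N\kp^{-1-\epsilon}$ and the contribution of the $O(\log C(\pi))$ ramified primes, bounded directly via \eqref{eqn:LRS_finite}, are $o(\log C(\pi))$—here one uses the paper's standing bound on $[F:\Q]$ in terms of $\log\log C(\pi)$ to make the field dependence benign—which completes the estimate.

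The crux, and the reason for the restriction $n \le 4$, is the availability of automorphic $L$-functions realizing the coefficients $\lambda_{\wedge^k\pi}(\kp)$ demanded by the root bound: these are classical for $k \in \{1,n-1,n\}$, and the sole remaining case $k = 2$, which occurs only when $n = 4$, rests on Kim's exterior-square functoriality for $\GL_4$. For $n \ge 5$ the range $2 \le k \le n-2$ is nonempty and the corresponding instances of Langlands functoriality are open, so the argument breaks down. The other ingredients—Cauchy's root bound, Rankin--Selberg positivity, and the convexity-type bound for $L$-functions at $1+\epsilon$—are routine.
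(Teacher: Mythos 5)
Your proposal reconstructs, essentially in full, the argument of Brumley's paper that the authors simply cite; the ingredients you use (Cauchy's root bound for the Satake polynomial, the identifications $\wedge^n\pi = \omega_\pi$ and $\wedge^{n-1}\pi = \tilde\pi\otimes\omega_\pi$, Kim's exterior-square lift to fill the $k=2$ gap when $n=4$, and Rankin--Selberg positivity) are exactly the ones the paper's remark attributes to Brumley. So this is not really a different route, but it is a welcome expansion of a proof the paper leaves to a reference, and the skeleton is sound.

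There is, however, one step that does not work as you have written it. You need $L(1+\epsilon,\pi\times\tilde\pi)\ll_{n,\epsilon}C(\pi)^{\epsilon}$, and you claim this follows from \cref{lem:convexity} ``letting $\sigma\to 1^+$, so that the left side there becomes the residue of $L(s,\pi\times\tilde\pi)$ at $s=1$, together with positivity of the Dirichlet coefficients.'' Taking $\sigma\to 1^+$ in \cref{lem:convexity} only bounds the residue $\kappa$; no amount of positivity lets you pass from a bound on $\kappa$ to a bound on $L(1+\epsilon)$ (indeed, integrating the bound \eqref{eqn:mertens} on $-L'/L$ from $\eta=\epsilon$ to $\eta=1$ only gives $L(1+\epsilon)\ll_\epsilon C(\pi\times\tilde{\pi})^{1/2}$, which is far weaker). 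What actually closes the gap is the bound of Li quoted inside the proof of \cref{lem:convexity}: it holds on the range $1\le\sigma\le 3$, so at $\sigma=1+\epsilon$ it gives $\epsilon\,|L(1+\epsilon,\pi\times\tilde\pi)|\ll\exp\bigl(\Cr{Li}n^2[F:\Q]\log C(\pi\times\tilde\pi)/\log\log C(\pi\times\tilde\pi)\bigr)$, which is $\ll_{n,\epsilon,F}C(\pi\times\tilde\pi)^{\epsilon/(2n)}\ll C(\pi)^{\epsilon}$ once $C(\pi\times\tilde\pi)$ exceeds an $(n,\epsilon,F)$-dependent threshold, the finitely many remaining $\pi$ being absorbed into the implied constant. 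With that substitution (and the analogous use of Li's bound for $L(s,\wedge^2\pi\times\widetilde{\wedge^2\pi})$ in the $n=4$ case, for which you should also record that the conductor of $\wedge^2\pi$ is polynomially bounded in $C(\pi)$ via Bushnell--Henniart), the argument goes through.
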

\begin{proof}
	See \cite[Theorem 1]{Brumley_2}; the Dirichlet series to be bounded is denoted by $L(s,\pi,\left|\max\right|^2)$ therein.  For $n\in\{1,2,3\}$, the proof relies on basic properties of $L(s,\pi\times\tilde{\pi})$.  When $n=4$, the proof relies on the automorphy of the exterior square lift from $\GL_4$ to $\GL_6$, as proved by Kim \cite{Kim}.
\end{proof}

We begin with the observation that \eqref{eqn:prime_powers_bounded} is
\begin{equation}
\label{eqn:Rankin_trick}
\leq x\sum_{\substack{x<\N\kn\leq 2x \\ \textup{$\kn$ composite}}}\frac{\Lambda_{\pi\times\tilde{\pi}}(\kn)}{\N\kn}.
\end{equation}
Note that there are $O(\log x)$ ramified prime ideals, each of which has norm at most $x$.  At each such prime ideal, we have the bound
\[
\Lambda_{\pi\times\tilde{\pi}}(\kn)\leq n^2\N\kn^{1-\frac{2}{n^2+1}}\Lambda(\kn).
\]
Therefore, the contribution from the ramified primes to \eqref{eqn:Rankin_trick} is
\begin{multline*}
\ll n^2 x(\log x)\sum_{2\leq r\leq \frac{\log(2x)}{\log 2}}~\sum_{\substack{x^{1/r}\leq \N\kp\leq (2x)^{1/r} \\ \gcd(\kp,\kq_{\pi})\neq\cO_F}}\N\kp^{-r\frac{2}{n^2+1}}\\
\ll n^2(\log x)^2  x \sum_{2\leq r\leq \frac{\log(2x)}{\log 2}}x^{-\frac{2}{n^2+1}}\ll n^2 x^{1-\frac{2}{n^2+1}}(\log x)^3.
\end{multline*}

For integers $r\geq 1$, it follows from the definition of $\Lambda_{\pi\times\tilde{\pi}}(\kp^r)$ that if $\gcd(\kp,\kq_{\pi})=\cO_F$, then
\[
\Lambda_{\pi\times\tilde{\pi}}(\kp^r)\leq n^2\max_{1\leq j\leq n}|\alpha_{j,\pi}(\kp)|^{2r}\log\N\kp.
\]
Define
\[
\beta_{\kp}:=\N\kp^{-1}\max_{1\leq j\leq n}|\alpha_{j,\pi}(\kp)|^2.
\]
By \eqref{eqn:LRS_finite}, we have that $\beta_{\kp}\leq \N\kp^{-2/(n^2+1)}$.  The contribution from composite $\kn$ with $\gcd(\kn,\kq_{\pi})=\cO_F$ to \eqref{eqn:Rankin_trick} is
\begin{equation}
\label{eqn:prime_powers_first_steps}
\begin{aligned}
\ll n^2 x (\log x)\sum_{r=2}^{\infty}\sum_{x^{1/r}<\N\kp\leq (2x)^{1/r}}\beta_{\kp}^r &\ll n^2x  (\log x)\sum_{2\leq R\leq \frac{\log(2x)}{\log 2}}~\sum_{x^{1/R}<\N\kp\leq (2x)^{1/R}}~\sum_{r=R}^{\infty}\beta_{\kp}^r \\
&=n^2 x (\log x)\sum_{2\leq R\leq \frac{\log(2x)}{\log 2}}~\sum_{x^{1/R}<\N\kp\leq (2x)^{1/R}}\frac{\beta_{\kp}^{R}}{1-\beta_{\kp}}\\
&\leq n^2 x^{1-\frac{1}{n^2+1}}(\log x)\sum_{2\leq R\leq \frac{\log(2x)}{\log 2}}~\sum_{x^{1/R}<\N\kp\leq (2x)^{1/R}}\frac{\beta_{\kp}}{1-\beta_{\kp}}.
\end{aligned}\hspace{-1.5mm}
\end{equation}
In \eqref{eqn:prime_powers_first_steps}, the contribution from prime ideals with norm at most $2^{n^2}$ is $O(1)$ (since $n\leq 4$).  If $\N\kp>2^{n^2}$, then $1-\beta_{\kp}\geq\frac{1}{2}$.  Thus, \eqref{eqn:prime_powers_first_steps} is
\begin{equation}
\label{eqn:prime_powers_next_step}
\begin{aligned}
&\leq n^2 x^{1-\frac{1}{n^2+1}}(\log x)\sum_{2\leq R\leq \frac{\log(2x)}{\log 2}}\Big(2\sum_{x^{1/R}<\N\kp\leq (2x)^{1/R}}\beta_{\kp}+O(1)\Big)\\
&\leq 2n^2 x^{1-\frac{1}{2(n^2+1)}}(\log x)\sum_{2\leq R\leq \frac{\log(2x)}{\log 2}}\Big(2\sum_{x^{1/R}<\N\kp\leq (2x)^{1/R}}\beta_{\kp}\N\kp^{-\frac{1}{n^2+1}}+O(1)\Big).
\end{aligned}
\end{equation}
Note that $x\leq 2\log(x+1)$ for all $0\leq x\leq 5/2$.  Since $0\leq \beta_{\kp}\N\kp^{-\frac{1}{n^2+1}}\leq 1$ for all $\kp$, it follows that \eqref{eqn:prime_powers_next_step} is
\begin{equation}
\label{eqn:prime_powers_last_step}
\begin{aligned}
&\leq 2n^2 x^{1-\frac{1}{2(n^2+1)}}(\log x)\sum_{2\leq R\leq \frac{\log(2x)}{\log 2}}\Big(2\sum_{x^{1/R}<\N\kp\leq (2x)^{1/R}}\log(1+\beta_{\kp}\N\kp^{-\frac{1}{n^2+1}})+O(1)\Big)\\
&\ll n^2x^{1-\frac{1}{2(n^2+1)}}(\log x)\sum_{\kp}\log\Big(\sum_{r=0}^{\infty}\beta_{\kp}\N\kp^{-\frac{r}{n^2+1}}\Big)\\
&=n^2x^{1-\frac{1}{2(n^2+1)}}(\log x)\log\Big(\prod_{\kp}\sum_{r=0}^{\infty}\frac{\max_{1\leq j\leq n}|\alpha_{j,\pi}(\kp)|^{2r}}{\N\kp^{r(1+\frac{1}{n^2+1})}}\Big)
\end{aligned}
\end{equation}
By \cref{lem:second_order} and our hypothesis that $x\geq C(\pi)$, \eqref{eqn:prime_powers_last_step} is $\ll x^{1-\frac{1}{2(n^2+1)}}(\log x)^2$.

\bibliographystyle{abbrv}
\bibliography{GeneralizedLinnik}

\end{document}